\tikzset{w/.style={circle, draw,inner sep=1pt},b/.style={circle,draw,fill,inner sep=2pt}, s/.style={rectangle, draw,inner sep=3pt}}
\newtheorem{thm}{Theorem}[section]
\newtheorem{prop}[thm]{Proposition}
\newtheorem{lm}[thm]{Lemma}
\newtheorem{cor}[thm]{Corollary}
\newtheorem{conjecture}[thm]{Conjecture}
\newtheorem*{theorem}{Theorem}
\theoremstyle{definition}
\newtheorem{defn}[thm]{Definition}
\theoremstyle{remark}
\newtheorem{remark}[thm]{Remark}
\newtheorem{example}[thm]{Example}
\newcommand{\cA}{\mathcal{A}}
\newcommand{\cB}{\mathcal{B}}
\newcommand{\cC}{\mathcal{C}}
\newcommand{\cD}{\mathcal{D}}
\newcommand{\cO}{\mathcal{O}}
\newcommand{\B}{\mathrm{B}}
\newcommand{\N}{\mathrm{N}}
\newcommand{\T}{\mathrm{T}}
\newcommand{\bu}{\mathbf{1}}
\newcommand{\bL}{\mathbb{L}}
\newcommand{\g}{\mathfrak{g}}
\newcommand{\h}{\mathfrak{h}}
\renewcommand{\k}{\mathfrak{k}}
\newcommand{\s}{\mathbf{s}}
\newcommand{\Ass}{\mathrm{Ass}}
\newcommand{\coAss}{\mathrm{coAss}}
\newcommand{\Br}{\mathrm{Br}}
\newcommand{\Comm}{\mathrm{Comm}}
\newcommand{\coComm}{\mathrm{coComm}}
\newcommand{\E}{\mathbb{E}}
\newcommand{\coE}{\mathrm{co}\mathbb{E}}
\newcommand{\Lie}{\mathrm{Lie}}
\newcommand{\coLie}{\mathrm{coLie}}
\renewcommand{\P}{\mathbb{P}}
\newcommand{\coP}{\mathrm{co}\mathbb{P}}
\newcommand{\aug}{\mathrm{aug}}
\newcommand{\coaug}{\mathrm{coaug}}
\newcommand{\cu}{\mathrm{cu}}
\newcommand{\kos}{W_{\mathrm{Kos}}}
\newcommand{\qis}{W_{\mathrm{qis}}}
\newcommand{\sgn}{\mathrm{sgn}}
\newcommand{\str}{\mathrm{str}}
\newcommand{\un}{\mathrm{un}}
\newcommand{\alg}{\mathrm{Alg}}
\newcommand{\ialg}{\mathcal{A}\mathrm{lg}}
\newcommand{\bialg}{\mathrm{BiAlg}}
\newcommand{\coalg}{\mathrm{CoAlg}}
\newcommand{\lmod}{\mathrm{LMod}}
\newcommand{\ilmod}{\mathcal{L}\mathrm{Mod}}
\newcommand{\Ch}{\mathrm{Ch}}
\newcommand{\iCh}{\mathcal{C}\mathrm{h}}
\newcommand{\add}{\mathrm{add}}
\newcommand{\ad}{\mathrm{ad}}
\newcommand{\Arr}{\mathrm{Arr}}
\renewcommand{\Bar}{\mathrm{Bar}}
\newcommand{\C}{\mathrm{C}}
\newcommand{\Cn}{\mathrm{Cn}}
\newcommand{\Cois}{\mathrm{Cois}}
\renewcommand{\d}{\mathrm{d}}
\newcommand{\Der}{\mathrm{Der}}
\newcommand{\forget}{\mathrm{forget}}
\newcommand{\Fun}{\mathrm{Fun}}
\newcommand{\Hom}{\mathrm{Hom}}
\newcommand{\ish}{\textrm{!`}}
\newcommand{\id}{\mathrm{id}}
\newcommand{\U}{\mathrm{U}}
\newcommand{\triv}{\mathrm{triv}}
\newcommand{\Z}{\mathrm{Z}}
\DeclareMathOperator{\Spec}{Spec}
\DeclareMathOperator{\Sym}{Sym}
\newcommand{\defterm}[1]{\textbf{\emph{#1}}}
\newcommand{\adj}[2]{
\xymatrix{
#1 \ar@<.5ex>[r] & #2 \ar@<.5ex>[l]
}
}
\renewcommand{\to}{\longrightarrow}
\begin{document}
\title{Braces and Poisson additivity}
\author{Pavel Safronov}
\address{Max-Planck-Institut F\"{u}r Mathematik, Bonn, Germany}
\curraddr{University of Z\"{u}rich, Winterthurerstrasse 190, 8057 Z\"{u}rich, Switzerland}
\email{pavel.safronov@math.uzh.ch}
\begin{abstract}
We relate the brace construction introduced by Calaque and Willwacher to an additivity functor. That is, we construct a functor from brace algebras associated to an operad $\cO$ to associative algebras in the category of homotopy $\cO$-algebras. As an example, we identify the category of $\P_{n+1}$-algebras with the category of associative algebras in $\P_n$-algebras. We also show that under this identification there is an equivalence of two definitions of derived coisotropic structures in the literature.
\end{abstract}
\maketitle

\section*{Introduction}

This paper is devoted to a proof of an equivalence of symmetric monoidal $\infty$-categories
\[\ialg_{\P_{n+1}}\cong \ialg(\ialg_{\P_n})\]
between the $\infty$-category of $\P_{n+1}$-algebras and the $\infty$-category of associative algebras in the $\infty$-category of $\P_n$-algebras. Here $\P_{n+1}$ is the operad which controls dg commutative algebras together with a Poisson bracket of degree $-n$.

\subsection*{Braces}

Let $\cO$ be a dg operad and $\cC$ its Koszul dual cooperad which is assumed to be Hopf. Following Tamarkin's work \cite{Ta1} on the deformation complex of a $\P_n$-algebra, Calaque and Willwacher \cite{CW} introduced an operad $\Br_{\cC}$ of brace algebras which acts on the deformation complex of any homotopy $\cO$-algebra. Moreover, they have remarked that the brace construction is an analogue of the Boardman--Vogt tensor product $\E_1\otimes \Omega\cC$ of operads. This is suggested by the following examples:
\begin{itemize}
\item If $\bu$ is the trivial cooperad, $\Br_{\bu}\cong \E_1$.
\item If $\coAss$ is the cooperad of coassociative coalgebras, $\Br_{\coAss}\{1\}\cong \E_2$.
\item If $\coComm$ is the cooperad of cocommutative coalgebras, $\Br_{\coComm}\cong \Lie$.
\item If $\coP_n$ is the cooperad of $\P_n$-coalgebras, $\Br_{\coP_n}\{n\}\cong \P_{n+1}$.
\end{itemize}

In this paper we explain to what extent this is true. Namely, suppose $\cC$ is a Hopf cooperad satisfying a minor technical assumption. We construct a functor of $\infty$-categories
\begin{equation}
\ialg_{\Br_{\cC}}\to \ialg(\ialg_{\cO})
\label{eq:braceadditivityfunctor}
\end{equation}
from the $\infty$-category of $\Br_{\cC}$-algebras to the $\infty$-category of associative algebras in the $\infty$-category of $\cO$-algebras. Let us note that we did not assume that $\cO$ is a Hopf operad and the symmetric monoidal structure on $\ialg_{\cO}$ comes from the Koszul dual side.

Unfortunately, we do not know if \eqref{eq:braceadditivityfunctor} is an equivalence in general, but we do show that it is an equivalence in two examples of interest: namely, Lie algebras and Poisson algebras.

Suppose $\cC=\coComm$. As we have mentioned, $\Br_{\coComm}\cong\Lie$ and so we get a functor
\[\add\colon\ialg_{\Lie}\to \ialg(\ialg_{\Lie}).\]
We show that it is an equivalence and in fact coincides with the functor which sends a Lie algebra $\g$ to the associative algebra object in the category of Lie algebras $0\times_{\g} 0$ (see Proposition \ref{prop:LieadditivityOmega}). We also show that the same functor can be constructed as follows. Given a Lie algebra $\g$, the universal enveloping algebra $\U(\g)$ is a cocommutative bialgebra, i.e. an associative algebra object in cocommutative coalgebras. Identifying cocommutative coalgebras with Lie algebras using Koszul duality we obtain the same functor (see Proposition \ref{prop:LieadditivityU}). Let us mention that the underlying Lie algebra structure on $\add(\g)$ is canonically trivial by Proposition \ref{prop:underlyingLietrivial}.

Note that $\Br_{\coComm}$ is an important operad in itself and appears for instance in the description of the Atiyah bracket of vector fields, see Section \ref{sect:brcocomm}.

\subsection*{Poisson additivity}
The additivity functor is more interesting in the case of $\P_{n+1}$-algebras. So, take $\cC=\coP_n$. Since $\Br_{\coP_n}\{n\}\cong \P_{n+1}$, we obtain a functor
\[\add\colon\ialg_{\P_{n+1}}\to \ialg(\ialg_{\P_n}).\]

The following statement combines Propositions \ref{prop:CommPoissoncompatible} - \ref{prop:LiePoissonForgetcompatible} and Theorem \ref{thm:Poissonadditivity}.

\begin{theorem}
The additivity functor
\[\ialg_{\P_{n+1}}\to \ialg(\ialg_{\P_n})\]
is an equivalence of symmetric monoidal $\infty$-categories.

Moreover, the diagrams
\[
\xymatrix{
\ialg_{\P_{n+1}} \ar[d] \ar[r] & \ialg(\ialg_{\P_n}) \ar[d] \\
\ialg_{\Comm} & \ialg_{\P_n} \ar[l]
}
\]
and
\[
\xymatrix{
\ialg_{\P_{n+1}} \ar[r] \ar@<.5ex>[d] & \ialg(\ialg_{\P_n}) \ar@<.5ex>[d] \\
\ialg_{\Lie} \ar[r] \ar@<.5ex>^{\Sym}[u] & \ialg(\ialg_{\Lie}) \ar@<.5ex>^{\Sym}[u]
}
\]
commute.
\end{theorem}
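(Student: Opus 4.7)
The plan is to decompose the theorem into three pieces---the commutative compatibility diagram, the Lie compatibility diagram together with its $\Sym$ adjoints, and the equivalence itself---and handle them in increasing order of difficulty. For the first diagram I would apply functoriality of the brace construction to the operad map $\Comm \to \P_{n+1}$: dually this is a morphism of Hopf cooperads (of the appropriate shift) $\coP_n \to \coComm$, and naturality of \eqref{eq:braceadditivityfunctor} in $\cC$ produces the square, modulo identifying the underlying $\cO$-algebra of an associative algebra in $\ialg_{\cO}$ with the evident forgetful functor. For the second diagram the analogous functoriality for $\Lie \to \P_{n+1}$ handles the downward forgetful functors, and the known Lie additivity of Propositions \ref{prop:LieadditivityOmega}--\ref{prop:LieadditivityU} identifies the bottom row; commutativity with the upward $\Sym$ adjoints is then the mate transformation of the downward compatibility, which one checks is an equivalence on free generators.

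For the equivalence itself, my strategy is to produce a candidate inverse and reduce verification to free algebras. Given $A \in \ialg(\ialg_{\P_n})$, the underlying $\P_n$-algebra supplies the commutative product $m$ and the degree $-n+1$ Poisson bracket, while the internal associative product $\mu$ in $\ialg_{\P_n}$ is a degree-zero multiplication whose deviation from $m$ (suitably antisymmetrised) furnishes a candidate degree $-n$ bracket making the underlying object a $\P_{n+1}$-algebra. Both source and target are presentable symmetric monoidal $\infty$-categories whose forgetful functors to $\iCh$ are monadic and conservative, and the additivity functor preserves sifted colimits, so it suffices to check the equivalence on free algebras $\Sym(V)$ for $V \in \iCh$. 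On such free objects the two compatibility diagrams constrain $\add(\Sym(V))$: its underlying $\P_n$-algebra is determined by the commutative diagram, and its Lie shadow by the Lie diagram combined with Lie additivity; together with the candidate inverse these should pin the answer down. The symmetric monoidality is inherited from \eqref{eq:braceadditivityfunctor}, which is itself symmetric monoidal thanks to the Hopf structure on $\coP_n$.

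The main obstacle is to show that the commutative and Lie shadows really do determine $\add(\Sym(V))$ and match the candidate inverse, since a $\P_{n+1}$-structure carries a Leibniz compatibility between its two operations which is strictly more than the sum of its parts. This requires tracing the tree combinatorics of $\Br_{\coP_n}\{n\} \cong \P_{n+1}$ to confirm that the brace-theoretic associative product on $\add(\Sym(V))$, viewed as a morphism of $\P_n$-algebras, is exactly the commutative product deformed by the degree-$-n$ bracket encoded in the inverse construction. Once this is established on generators and extended by the monadic reduction, the equivalence follows, and together with the two compatibility diagrams this yields the full theorem.
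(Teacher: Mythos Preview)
Your proposal has a genuine gap in the main equivalence argument, and the strategy you chose makes that gap hard to close.

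First, the candidate inverse you sketch does not work as stated. For an object $A\in\ialg(\ialg_{\P_n})$ you propose to extract a degree $-n$ bracket from the antisymmetrised deviation of the internal associative product $\mu$ from the commutative product $m$. But strictly, i.e.\ in the $1$-category $\alg(\alg_{\P_n})$, Eckmann--Hilton forces $\mu=m$; the paper points this out in the introduction when it notes that $\alg(\alg_{\P_n})$ is equivalent to commutative algebras and that the rectification functor $\alg(\alg_{\P_n})[\qis^{-1}]\to\ialg(\ialg_{\P_n})$ is \emph{not} an equivalence. So the ``deviation'' lives entirely in higher coherence data, and extracting a $\P_{n+1}$-structure from it is exactly the hard direction you are trying to avoid.

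Second, and more structurally, you reduce via monadicity over $\iCh$, which makes the Beck--Chevalley condition the statement that $\add$ sends the free $\P_{n+1}$-algebra on $V$ to the free object of $\ialg(\ialg_{\P_n})$ on $V$. You correctly flag this as the main obstacle and then leave it open (``tracing the tree combinatorics\ldots''). The paper's key manoeuvre is to run Barr--Beck over $\ialg_{\Lie}$ instead of $\iCh$: the bottom functor is then the Lie additivity equivalence (Corollary~\ref{cor:Lieadditivity}, proved independently via the loop-space identification of Proposition~\ref{prop:LieadditivityOmega}), and the Beck--Chevalley condition becomes precisely the $\overline{\Sym}$ compatibility square, which is checked directly using the Lie-bialgebra model (Proposition~\ref{prop:LiePoissonSymcompatible}). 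This choice of base turns the intractable free-algebra computation into a concrete statement about universal envelopes of Lie bialgebras with trivial cobracket.

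Two smaller points. Your argument for the first (commutative) square via ``functoriality of the brace construction in $\cC$'' along a map $\coP_n\to\coComm$ is not the right duality: the forgetful $\P_{n+1}\to\Comm$ corresponds on the Koszul side to $\coLie\to\coP_{n+1}$, not to a map between the brace-input cooperads. The paper instead checks a small commutative square of operad maps (Proposition~\ref{prop:CommPoissoncompatible}). And the symmetric monoidality is not automatic from the Hopf structure alone; the paper obtains it by showing the bar functor to Lie bialgebras is lax monoidal and becomes strict after localisation (Proposition~\ref{prop:Poissonadditivitysm}, relying on Proposition~\ref{prop:barlaxmonoidal}).
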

Nick Rozenblyum has given an independent proof of this result in the language of factorization algebras. This statement is a Poisson version of the additivity theorem \cite[Theorem 5.1.2.2]{HA} for $\E_n$-algebras proved by Dunn and Lurie: one has an equivalence
\[\ialg_{\E_{n+1}}\cong \ialg(\ialg_{\E_n})\]
of symmetric monoidal $\infty$-categories, where $\E_n$ is the operad of little $n$-disks.

One has the following explicit description of the additivity functor for Poisson algebras which uses some ideas of Tamarkin (see \cite{Ta1} and \cite{Ta2}). For simplicity, we describe the construction in the case of non-unital $\P_{n+1}$-algebras. In the case of unital $\P_{n+1}$-algebras one has to take care of the natural curving appearing on the Koszul dual side, but otherwise the construction is identical (see Section \ref{sect:unitalpoissonadditivity}). If $A$ is a commutative algebra, we can consider its Harrison complex $\coLie(A[1])$ which is a Lie coalgebra. If $A$ is moreover a $\P_{n+1}$-algebra, then the Harrison complex $\coLie(A[1])[n-1]$ has a natural structure of an $(n-1)$-shifted Lie bialgebra (Definition \ref{def:liebialg}) which defines a functor
\[\alg_{\P_{n+1}}\to \bialg_{\Lie_{n-1}}.\]
Given a Lie algebra $\g$, its universal enveloping algebra $\U(\g)$ is a cocommutative bialgebra. If $\g$ is moreover an $(n-1)$-shifted Lie bialgebra, then $\U(\g)$ acquires a natural cobracket making it into an associative algebra object in $\P_n$-coalgebras. Thus, we get a functor
\[\bialg_{\Lie_{n-1}}\to \alg(\coalg_{\coP_n}).\]
Applying Koszul duality we identify $\coalg_{\coP_n}$ with the category of $\P_n$-algebras thus giving the required additivity functor.

An important point we have neglected in this discussion is that at the very end one has to pass from the localization of the category of associative algebras in $\P_n$-coalgebras to the $\infty$-category of (homotopy) associative algebras in the localization of the category of $\P_n$-coalgebras. That is, we have a natural functor
\[\alg(\coalg_{\coP_n})[\kos^{-1}]\to \ialg(\coalg_{\coP_n}[\kos^{-1}])\]
where $\kos$ is a certain natural class of weak equivalences we define in the paper, $\alg$ is the 1-category of associative algebras and $\ialg$ is the $\infty$-category of (homotopy) associative algebras. The fact that this functor is an equivalence is not automatic: the corresponding rectification statement was proved in \cite[Theorem 4.1.8.4]{HA} under the assumption that the model category in question is a monoidal model category while the monoidal structure on $\coalg_{\coP_n}$ does not even preserve colimits. Furthermore, if we do not pass to the Koszul dual side, the localization functor (where $\qis$ is the class of quasi-isomorphisms)
\[\alg(\alg_{\P_n})[\qis^{-1}]\to \ialg(\alg_{\P_n}[\qis^{-1}])\]
is \emph{not} an equivalence. Indeed, $\alg(\alg_{\P_n})$ is equivalent to the category of commutative algebras while we prove that $\ialg(\alg_{\P_n}[\qis^{-1}])$ is equivalent to the $\infty$-category of $\P_{n+1}$-algebras.

Let us note that the underlying commutative structure on $\add(A)$ for a $\P_{n+1}$-algebra $A$ coincides with the commutative structure on $A$ and the underlying Lie structure on $\add(A)$ is trivial. However, the underlying $\P_n$-structure on $\add(A)$ is not necessarily commutative.

One motivation for developing Poisson additivity is the recent work of Costello and Gwilliam \cite{CG} that formalizes algebras of observables in quantum field theories. In that work a topological quantum field theory is described by a locally-constant factorization algebra on the spacetime manifold valued in $\E_0$-algebras. Since locally-constant factorization algebras on $\mathbb{R}^n$ are the same as $\E_n$-algebras, we see that observables in an $n$-dimensional topological quantum field theory are described by $\E_n\otimes \E_0=\E_n$-algebras. Similarly, classical topological field theories are described by locally-constant factorization algebras valued in $\P_0$-algebras, which in the case of $\mathbb{R}^n$ are the same as $\E_n$-algebras in $\P_0$-algebras. Our result thus shows that observables in an $n$-dimensional classical topological field theory are described by a $\P_n$-algebra (a natural result one expects by extrapolating from the case of topological quantum mechanics which is $n=1$).

\subsection*{Coisotropic structures}

Another motivation is given by the theory of shifted Poisson geometry developed by Calaque--Pantev--To\"{e}n--Vaqui\'{e}--Vezzosi and, more precisely, derived coisotropic structures. Recall that an $n$-shifted Poisson structure on an affine scheme $\Spec A$ for $A$ a commutative dg algebra is described by a $\P_{n+1}$-algebra structure on $A$. Now suppose $f\colon \Spec B\rightarrow \Spec A$ is a morphism of affine schemes. In \cite{CPTVV} the following notion of derived coisotropic structures was introduced. Assume the statement of Poisson additivity. Then one can realize $A$ as an associative algebra in $\P_n$-algebras and a coisotropic structure on $f$ is a lift of the natural action of $A$ on $B$ in commutative algebras to $\P_n$-algebras. Let us denote by $\Cois^{CPTVV}(f, n)$ the space of such coisotropic structures.

A more explicit definition of derived coisotropic structures was given in \cite{Sa} and \cite{MS} which does not rely on Poisson additivity. An action of a $\P_{n+1}$-algebra $A$ on a $\P_n$-algebra $B$ was modeled by a certain colored operad $\P_{[n+1, n]}$ and a derived coisotropic structure was defined to be the lift of the natural action of $A$ on $B$ in commutative algebras to an algebra over the operad $\P_{[n+1, n]}$. Let us denote by $\Cois^{MS}(f, n)$ the space of such coisotropic structures.

In this paper we show that these two notions coincide. The following statement is Corollary \ref{cor:coisotropicadditivity}.

\begin{theorem}
Suppose $f\colon A\rightarrow B$ is a morphism of commutative dg algebras. One has a natural equivalence
\[\Cois^{MS}(f, n)\cong \Cois^{CPTVV}(f, n)\]
of spaces of $n$-shifted coisotropic structures.
\end{theorem}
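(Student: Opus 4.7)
The plan is to derive the equivalence by promoting Poisson additivity from the level of algebras to the level of algebras together with modules, and then observing that each of the two definitions of coisotropic structures is exactly a space of lifts of the forgetful functor along the corresponding additivity equivalence.

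First I would reformulate both sides uniformly. For $\Cois^{CPTVV}(f,n)$, using the additivity equivalence $\add\colon \ialg_{\P_{n+1}} \cong \ialg(\ialg_{\P_n})$ already established in the theorem above, a coisotropic structure is by definition a lift of the action of $\add(A)$ on $B$ (viewed in commutative algebras via the forgetful functors $\ialg(\ialg_{\P_n})\to \ialg_{\Comm}$ from the compatibility diagram) to a left $\add(A)$-module structure on $B$ in $\ialg_{\P_n}$. On the other side, $\Cois^{MS}(f,n)$ is the space of $\P_{[n+1,n]}$-algebra structures on the pair $(A,B)$ refining the given commutative data. So both spaces are fibers of forgetful maps, and the task is to identify their total spaces.

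Next I would set up a relative (pair) version of the additivity functor. The colored operad $\P_{[n+1,n]}$ governs pairs $(A,B)$ with $A$ a $\P_{n+1}$-algebra and $B$ a $\P_n$-algebra equipped with a compatible action. The key step is to construct, in parallel with \eqref{eq:braceadditivityfunctor}, a functor
\[\ialg_{\P_{[n+1,n]}} \to \ilmod(\ialg_{\P_n})\]
landing in pairs $(\tilde A, M)$ where $\tilde A$ is an associative algebra in $\P_n$-algebras and $M$ is a left $\tilde A$-module in $\P_n$-algebras, such that the composition with the forgetful functor to the first factor recovers $\add$, and the composition with the forgetful functor to $\ialg_{\P_n}$ on the second factor recovers the underlying $\P_n$-algebra $B$. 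Concretely, one repeats the Harrison/enveloping-algebra construction sketched in the introduction for pairs: the Harrison complex of $A$ acts on a Harrison-type complex for the pair, and the universal enveloping algebra then acts on the resulting module, producing the required left module in $\P_n$-coalgebras, which becomes a left module in $\P_n$-algebras after Koszul duality.

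The main obstacle, and the heart of the argument, is to show that this pair-level additivity functor is an equivalence. I would argue this by essentially the same strategy used for Theorem \ref{thm:Poissonadditivity}: check that all the intermediate functors (Harrison, universal enveloping, Koszul duality on the Koszul dual side) are compatible with the module variables, and that the rectification result $\alg(\coalg_{\coP_n})[\kos^{-1}]\to \ialg(\coalg_{\coP_n}[\kos^{-1}])$ extends to a rectification for left modules, so that the analogous issue flagged in the introduction does not prevent passing from strict modules to $\infty$-modules. Conservativity and compatibility with sifted colimits, together with the absolute case, then force the pair-level functor to be an equivalence, either by Barr--Beck or by reducing to a check on free objects where both sides are computable.

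Once the pair-level equivalence is in place, the corollary is formal. Both $\Cois^{MS}(f,n)$ and $\Cois^{CPTVV}(f,n)$ are defined as the homotopy fibers, over the common commutative datum $f\colon A\to B$, of the forgetful map from $\ialg_{\P_{[n+1,n]}}$ (respectively $\ilmod(\ialg_{\P_n})$) to $\ilmod(\ialg_{\Comm})$. Since the equivalence built above intertwines these forgetful functors by construction, it induces an equivalence on fibers, yielding the desired natural equivalence $\Cois^{MS}(f,n)\cong \Cois^{CPTVV}(f,n)$.
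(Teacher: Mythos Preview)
Your proposal is correct and follows essentially the same route as the paper: construct a relative additivity functor $\ialg_{\P_{[n+1,n]}}\to\ilmod(\ialg_{\P_n})$, prove it is an equivalence by the same Barr--Beck strategy as in the absolute case, and deduce the result by comparing fibers over the commutative datum. Two small points where the paper is more explicit than your sketch: first, the equivalence of the relative additivity functor is established by reducing to a \emph{relative Lie} additivity statement $\ialg_{\Lie_{[1,0]}}\cong\ilmod(\ialg_{\Lie})$ proved separately, rather than directly; second, the compatibility of the equivalence with the forgetful functors to $\Arr(\ialg_{\Comm})$ is not quite ``by construction'' and is checked by an explicit computation (identifying the morphism on primitives with the Harrison bar of $A\to B$), so you should expect to verify a small diagram there rather than take it for granted.
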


This statement is proved by developing a relative analogue of the Poisson additivity functor. Namely, Theorem \ref{thm:relPoissonadditivity} asserts that the $\infty$-category of $\P_{[n+1, n]}$-algebras is equivalent to the $\infty$-category of pairs $(A, M)$, where $A$ is an associative algebra and $M$ is an $A$-module in the $\infty$-category of $\P_n$-algebras.

\subsection*{Notations}

\begin{itemize}
\item Given a relative category $(\cC, W)$ we denote by $\cC[W^{-1}]$ the underlying $\infty$-category.

\item We work over a field $k$ of characteristic zero; $\Ch$ denotes the category of chain complexes of $k$-modules and $\iCh$ the underlying $\infty$-category.

\item Given a topological operad $\cO$, we denote by $\alg_{\cO}(\cC)$ the category of $\cO$-algebras in a symmetric monoidal category $\cC$ and by $\ialg_{\cO}(\cC)$ the $\infty$-category of $\cO$-algebras in a symmetric monoidal $\infty$-category $\cC$. If $\cO$ is a dg operad, the category of $\cO$-algebras in complexes is simply denoted by $\alg_{\cO}$.

\item All operads are non-unital unless specified otherwise. We denote by $\cO^{\un}$ the operad of unital $\cO$-algebras.

\item All non-counital coalgebras are conilpotent.
\end{itemize}

\subsection*{Acknowledgements}

The author would like to thank D. Calaque, B. Hennion and N. Rozenblyum for useful conversations and V. Hinich for pointing out a mistake in the previous version. The main theorem (Theorem \ref{thm:Poissonadditivity}) was first announced by Nick Rozenblyum in 2013, who has given several talks about it. Unfortunately, his proof is not yet publicly available. This article presents another proof of the result which is different from Rozenblyum's.

\section{Operads}

\subsection{Relative categories}

In the paper we will extensively use relations between relative categories and $\infty$-categories, so let us recall the necessary facts.

\begin{defn}
A \defterm{relative category} $(\cC, W)$ consists of a category $\cC$ and a subcategory $W\subset \cC$ which has the same objects as $\cC$ and contains all isomorphisms in $\cC$.
\end{defn}

We will call morphisms belonging to $W$ \defterm{weak equivalences}. A functor of relative categories $(\cC, W_\cC)\rightarrow (\cD, W_\cD)$ is a functor that preserves weak equivalences.

Recall that given a category $\cC$ its nerve $\N(\cC)$ is an $\infty$-category. Similarly, if $\cC$ is a relative category, the nerve $\N(\cC)$ is an $\infty$-category equipped with a system of morphisms $W$ and we intoduce the notation
\[\cC[W^{-1}] = \N(\cC)[W^{-1}],\]
where the localization functor on the right is defined in \cite[Proposition 4.1.7.2]{HA}. In particular, $\cC[W^{-1}]$ is an $\infty$-category which we call the \defterm{underlying $\infty$-category} of the relative category $(\cC, W)$.

We will also need a construction of symmetric monoidal $\infty$-categories from ordinary symmetric monoidal categories. We say that $\cC$ is a \defterm{relative symmetric monoidal category} if the functor $x\otimes -\colon \cC\rightarrow \cC$ preserves weak equivalences for every object $x\in\cC$.

\begin{prop}
Let $\cC$ be a relative symmetric monoidal category. Then the localization $\cC[W^{-1}]$ admits a natural structure of a symmetric monoidal $\infty$-category.

If $F\colon \cC_1\rightarrow \cC_2$ is a (lax) symmetric monoidal functor of relative symmetric monoidal categories, then its localization induces a (lax) symmetric monoidal functor of $\infty$-categories
\[F\colon \cC_1[W^{-1}]\to\cC_2[W^{-1}].\]
\label{prop:smlocalization}
\end{prop}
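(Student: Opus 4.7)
The plan is to realize the localization via Lurie's theory of symmetric monoidal $\infty$-categories developed in \cite[\S 4.1.7]{HA}. Starting from the ordinary symmetric monoidal category $\cC$, taking the nerve of its Grothendieck construction over the category of pointed finite sets yields a symmetric monoidal $\infty$-category $\N(\cC)^{\otimes}\to \N(\mathrm{Fin}_*)$. The class $W$ of weak equivalences determines a collection of edges in $\N(\cC)$, and the goal is to show that the associated localization inherits a symmetric monoidal structure and is functorial for (lax) symmetric monoidal functors.

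The first step is to verify that $W$ is stable under the tensor product in the bifunctorial sense: if $f\colon x\to x'$ and $g\colon y\to y'$ lie in $W$, then so does $f\otimes g$. This follows from the assumption by factoring
\[
f\otimes g = (f\otimes \id_{y'})\circ (\id_x\otimes g),
\]
where each factor is a weak equivalence since $x\otimes -$ preserves $W$ by hypothesis and, by the symmetry isomorphism, so does $-\otimes y'$. Closure of $W$ under composition then gives $f\otimes g\in W$, and iteration yields stability for $n$-ary tensor products. With this compatibility in hand, one applies the localization theorem for symmetric monoidal $\infty$-categories at a tensor-closed class of edges, which endows $\cC[W^{-1}]$ with a symmetric monoidal structure such that the localization functor $\N(\cC)\to \cC[W^{-1}]$ is symmetric monoidal and universal among symmetric monoidal functors out of $\N(\cC)$ inverting $W$.

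For the functoriality in the strict case, a symmetric monoidal $F\colon \cC_1\to \cC_2$ preserving weak equivalences induces a symmetric monoidal composite $\N(\cC_1)\to \N(\cC_2)\to \cC_2[W^{-1}]$ which inverts $W_{\cC_1}$, and hence factors symmetric monoidally through $\cC_1[W^{-1}]$ by the universal property. The main obstacle is the lax case, since a lax symmetric monoidal functor corresponds to a morphism of $\infty$-operads rather than of symmetric monoidal $\infty$-categories. Here I would invoke the $\infty$-operadic version of the localization: working over each pointed finite set $\langle n\rangle$, the stability of $W$ under $\otimes$ permits localization on each level, and compatibility of $F$ with weak equivalences assembles these levelwise localizations into a morphism of $\infty$-operads $\cC_1[W^{-1}]^{\otimes}\to \cC_2[W^{-1}]^{\otimes}$. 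Tracking the universal property through this non-strict $\infty$-operadic setting is where the main care is required; the rest is a formal application of established machinery.
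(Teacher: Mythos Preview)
Your proposal is correct and follows essentially the same route as the paper: pass to the symmetric monoidal $\infty$-category $\cC^{\otimes}$ via \cite[Construction 2.0.0.1]{HA}, observe that $W$ is compatible with the tensor product, and invoke \cite[Proposition 4.1.7.4]{HA} to produce the localized symmetric monoidal $\infty$-category together with its universal property. Your explicit factorization $f\otimes g=(f\otimes\id)\circ(\id\otimes g)$ is the content the paper suppresses under the phrase ``compatible with the tensor product.''

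The one place you diverge is your treatment of the lax case as a separate ``main obstacle.'' The paper handles the strict and lax cases uniformly, and this is justified: a lax symmetric monoidal functor is by definition a morphism of $\infty$-operads, and the localization of \cite[Proposition 4.1.7.4]{HA} is constructed at the level of $\infty$-operads, so its universal property applies directly to maps of $\infty$-operads. Thus the composite $\cC_1^{\otimes}\to\cC_2^{\otimes}\to\cC_2^{\otimes}[W^{-1}]$ is already a map of $\infty$-operads sending $W$ to equivalences, and factors through $\cC_1^{\otimes}[W^{-1}]$ by the same universal property you invoke in the strict case. There is no need for a separate levelwise argument.
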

\begin{proof}
Given a symmetric monoidal category $\cC$ we can construct the symmetric monoidal $\infty$-category $\cC^{\otimes}$ as in \cite[Construction 2.0.0.1]{HA}. The class of weak equivalences $W$ defines a system in the underlying $\infty$-category of $\cC^{\otimes}$ which is compatible with the tensor product and hence by \cite[Proposition 4.1.7.4]{HA} we can construct a symmetric monoidal $\infty$-category $(\cC')^{\otimes}[W^{-1}]$ whose underlying $\infty$-category is equivalent to $\cC[W^{-1}]$.

A (lax) symmetric monoidal functor $\cC_1\rightarrow \cC_2$ gives rise to a (lax) symmetric monoidal functor $\cC_1^{\otimes}\rightarrow \cC_2^{\otimes}$ of $\infty$-categories. Consider the composite
\[\cC_1^{\otimes}\to \cC_2^{\otimes}\to \cC_2^{\otimes}[W^{-1}].\]
Since $F\colon \cC_1\rightarrow \cC_2$ preserves weak equivalences, by the universal property of the localization we obtain a (lax) symmetric monoidal functor
\[F\colon \cC_1^{\otimes}[W^{-1}]\to \cC_2^{\otimes}[W^{-1}].\]
\end{proof}

For instance, let $\Ch$ be the symmetric monoidal category of chain complexes of $k$-vector spaces. Let $\qis\subset \Ch$ be the class of quasi-isomorphisms. Since $M\otimes -$ preserves quasi-isomorphisms for any $M\in\Ch$, we obtain a natural symmetric monoidal structure on the $\infty$-category
\[\iCh=\Ch[\qis^{-1}]\] of chain complexes.

We will repeatedly use the following method to prove that a functor between $\infty$-categories is an equivalence. Consider a commutative diagram of $\infty$-categories
\[
\xymatrix{
\cC_1 \ar_{G_1}[dr] \ar^{F}[rr] && \cC_2 \ar^{G_2}[dl]\\
& \cD &
}
\]
Assuming $G_1$ and $G_2$ have left adjoints $G_1^L$ and $G_2^L$ respectively, we obtain a natural transformation $G_2^L\rightarrow F G_1^L$ of functors $\cD\rightarrow \cC_2$. We say that the original diagram satisfies the \defterm{left Beck--Chevalley condition} if this natural transformation is an equivalence. The following is a corollary of the $\infty$-categorical version of the Barr--Beck theorem proved in \cite[Corollary 4.7.3.16]{HA}.

\begin{prop}
Suppose
\[
\xymatrix{
\cC_1 \ar_{G_1}[dr] \ar[rr] && \cC_2 \ar^{G_2}[dl]\\
& \cD &
}
\]
is a commutative diagram of $\infty$-categories such that
\begin{enumerate}
\item The functors $G_1$ and $G_2$ admit left adjoints.

\item The diagram satisfies the left Beck--Chevalley condition.

\item The $\infty$-categories $\cC_i$ admit and $G_i$ preserve geometric realizations of simplicial objects.

\item The functors $G_i$ are conservative.
\end{enumerate}

Then the functor $\cC_1\rightarrow \cC_2$ is an equivalence.
\label{prop:luriebarrbeck}
\end{prop}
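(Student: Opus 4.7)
The plan is to reduce the statement to the $\infty$-categorical Barr--Beck--Lurie monadicity theorem, \cite[Corollary 4.7.3.16]{HA}, which is precisely the result cited as the source. First I would check that for each $i\in\{1,2\}$, conditions (1), (3), (4) are exactly the hypotheses of that monadicity theorem applied to $G_i\colon \cC_i\to \cD$. This yields canonical equivalences $\cC_i\simeq \lmod_{T_i}(\cD)$, where $T_i := G_i\circ G_i^L$ is the monad on $\cD$ associated with the adjunction $G_i^L\dashv G_i$, and under these equivalences $G_i$ is identified with the forgetful functor to $\cD$.

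Next I would extract a morphism of monads $T_2\to T_1$ from the commutative triangle. The identity $G_1 = G_2\circ F$ gives, via the adjunction units and counits, a canonical mate $\alpha\colon G_2^L \to F\circ G_1^L$, and the left Beck--Chevalley condition is precisely the hypothesis that $\alpha$ is an equivalence. Applying $G_2$ to $\alpha$ and using $G_2\circ F = G_1$, one obtains a natural equivalence
\[ T_2 = G_2 G_2^L \simeq G_2 F G_1^L = G_1 G_1^L = T_1. \]
The formalism of mates (a morphism of adjunctions inducing a morphism of the associated monads, in the $\infty$-categorical incarnation developed in \cite[\S4.7]{HA}) promotes this to a morphism of monads on $\cD$, which is therefore an equivalence of monads.

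Finally, restriction of scalars along an equivalence of monads induces an equivalence on module categories, so $\lmod_{T_1}(\cD)\to \lmod_{T_2}(\cD)$ is an equivalence. Unpacking the identifications, the induced functor is precisely the image of $F$ under the monadicity equivalences, and hence $F$ itself is an equivalence.

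The main obstacle is really only conceptual bookkeeping: one must verify that the mate $\alpha$ in the $\infty$-categorical setting gives rise to a genuine morphism of monads (not merely an equivalence between their underlying functors), and that $F$ corresponds, under the two monadicity identifications, to the restriction-of-scalars functor along $T_2\to T_1$. Both points are formal and fit within the general monadicity machinery of \cite[Chapter 4.7]{HA}; no new arguments are needed beyond combining that machinery with the Beck--Chevalley hypothesis.
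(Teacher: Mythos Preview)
Your proposal is correct and follows the standard monadicity argument. Note, however, that the paper does not actually supply a proof of this proposition: it simply states the result and cites \cite[Corollary 4.7.3.16]{HA} as the source, treating the statement as essentially a restatement of Lurie's corollary. What you have written is, in effect, a sketch of how Lurie's corollary is deduced from the Barr--Beck--Lurie monadicity theorem (Theorem 4.7.3.5 in \cite{HA}): both $G_i$ are monadic, the Beck--Chevalley condition forces the associated monads to be equivalent, and hence the module categories are equivalent via $F$. So your approach is not different from the paper's---it is a correct unpacking of the cited reference, whereas the paper is content to invoke it directly.
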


\subsection{Operads}

Our definitions and notations for operads follows those of Loday and Vallette \cite{LV}. Unless specified otherwise, by an operad we mean an operad in chain complexes.

Given a symmetric sequence $V$, we define the shift $V[n]$ to be the symmetric sequence with
\[(V[n])(m) = V(m)[n].\]
Let $\sgn_m$ be the one-dimensional sign representation of $S_m$. We will also use the notation $V\{n\}$ to denote the symmetric sequence with
\[(V\{n\})(m) = V(m)\otimes \sgn_m^{\otimes n}[n(m-1)].\]
If $V$ is an operad or a cooperad, so is $V\{n\}$.

Let $\bu$ be the trivial operad. Recall that an \defterm{augmentation} on an operad $\cO$ is a morphism of operads
\[\cO\to \bu.\]
In particular, one obtains a splitting of symmetric sequences
\[\cO\cong \overline{\cO}\oplus \bu.\]
Similarly, one has a notion of a \defterm{coaugmentation} on a cooperad $\cC$.

Given an augmented operad $\cO$, its bar construction $\B\cO$ is defined to be the cofree cooperad on $\overline{\cO}[1]$ equipped with the bar differential which consists of two terms: one coming from the differential on $\cO$ and one coming from the product on $\cO$. Similarly, given a coaugmented cooperad $\cC$ we have its cobar construction $\Omega\cC$. We refer to \cite[Section 6.5]{LV} for details. In particular, one has a quasi-isomorphism of operads
\[\Omega\B\cO\stackrel{\sim}\to \cO.\]

Most of the operads of interest that control non-unital algebras satisfy $\cO(0)=0$ and $\cO(1)\cong k$ and hence possess a unique augmentation. However, operads controlling unital algebras tend not to have an augmentation, so, following Hirsh and Mill\`{e}s \cite{HM}, we relax the condition a bit.

\begin{defn}
A \defterm{semi-augmentation} on an operad $\cO$ is a morphism of the underlying graded symmetric sequences $\epsilon\colon\cO\rightarrow \bu$ which is not necessarily compatible with the differential and the product such that the composite
\[\bu\to \cO\stackrel{\epsilon}\to \bu\]
is the identity.
\end{defn}

Given a semi-augmented operad $\cO$, one can still consider the bar construction $\B\cO$, but the corresponding differential no longer squares to zero. Instead, we obtain a curved cooperad equipped with a curving $\theta\colon \cC(1)\rightarrow k[2]$ (see \cite[Definition 3.2.1]{HM} for a complete definition).

We refer to \cite[Section 3.3]{HM} for explicit formulas for the differential and the curving on the bar construction of a semi-augmented operad. Moreover, it is also shown there that the cobar construction $\Omega\cC$ on a coaugmented curved cooperad $\cC$ is a dg operad equipped with a natural semi-augmentation.

Finally, we refer to \cite[Section 7]{LV} for Koszul duality for augmented operads and to \cite[Section 4]{HM} for Koszul duality for semi-augmented operads. The important point that we will use in the paper is that the Koszul dual cooperad $\cC$ of $\cO$ is naturally equipped with a quasi-isomorphism
\[\Omega\cC\stackrel{\sim}\to \cO\]
which gives a semi-free resolution of the operad $\cO$. Such a quasi-isomorphism is equivalently given by a degree 1 (curved) Koszul twisting morphism $\cC\rightarrow \cO$.

\subsection{Operadic algebras}

Given an operad $\cO$ we denote by $\alg_{\cO}$ the category of $\cO$-algebras in chain complexes. Similarly, for a cooperad $\cC$ we denote by $\coalg_{\cC}$ the category of conilpotent $\cC$-coalgebras. To simplify the notation, we let
\[\alg = \alg_{\Ass^{\un}}\]
be the category of unital associative algebras.

If $\cC$ is a curved cooperad, we denote by $\coalg_{\cC}$ the category of curved conilpotent $\cC$-coalgebras (see \cite[Definition 5.2.1]{HM}) which are cofibrant. Note that morphisms strictly preserve the differential.

\begin{remark}
Positselski in \cite[Section 9]{Pos} considers a closely related category of curved coassociative coalgebras $k-\mathrm{coalg}_{\mathrm{cdg}}$ whose morphisms do not strictly preserve the differential.
\end{remark}

If $A$ is an $\cO$-algebra, then $A[-n]$ is an $\cO\{n\}$-algebra; similarly, if $C$ is a $\cC$-coalgebra, then $C[-n]$ is a $\cC\{n\}$-coalgebra.

Now consider a (curved) cooperad $\cC$ equipped with a (curved) Koszul twisting morphism $\cC\rightarrow \cO$. Given an $\cO$-algebra $A$ we define its bar construction to be
\[\B(A) = \cC(A) = \bigoplus_{n=0}^\infty (\cC(n)\otimes A^{\otimes n})^{S_n}\]
equipped with the bar differential (see \cite[Section 5.2.3]{HM}). Given a curved $\cC$-coalgebra $C$ we define its cobar construction to be
\[\Omega(C) = \cO(C) = \bigoplus_{n=0}^\infty (\cO(n)\otimes C^{\otimes n})_{S_n}\]
equipped with the cobar differential (see \cite[Section 5.2.5]{HM}). Note that the cobar differential squares to zero. In particular, we get a bar-cobar adjunction
\[\adj{\Omega\colon \coalg_{\cC}}{\alg_{\cO}\colon \B}\]
such that for any $\cO$-algebra $A$ the natural projection
\[\Omega\B A\to A\]
is a quasi-isomorphism.

Let us denote by $\qis\subset \alg_{\cO}$ the class of morphisms of $\cO$-algebras which are quasi-isomorphisms of the underlying complexes. Let us also denote by $\kos\subset \coalg_{\cC}$ the class of morphisms of (curved) $\cC$-coalgebras which become quasi-isomorphisms after applying the cobar functor $\Omega$. The class of weak equivalences $\kos$ is independent of the choice of the operad $\cO$ as shown by the following statement. Let us denote by $\Omega_{\cO}\colon \coalg_{\cC}\rightarrow \alg_{\cO}$ the cobar construction associated to the operad $\cO$.

\begin{prop}
Suppose $\cC\rightarrow \cO_1$ is a (curved) Koszul twisting morphism and $\cO_1\rightarrow \cO_2$ a quasi-isomorphism of operads. Consider a morphism of (curved) $\cC$-coalgebras $C_1\rightarrow C_2$. Then
\[\Omega_{\cO_1}(C_1)\to \Omega_{\cO_1}(C_2)\]
is a quasi-isomorphism iff
\[\Omega_{\cO_2}(C_1)\to \Omega_{\cO_2}(C_2)\]
is a quasi-isomorphism.
\label{prop:weindependent}
\end{prop}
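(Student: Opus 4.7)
Given the commutative square
\[
\xymatrix{
\Omega_{\cO_1}(C_1) \ar[r] \ar[d] & \Omega_{\cO_1}(C_2) \ar[d] \\
\Omega_{\cO_2}(C_1) \ar[r] & \Omega_{\cO_2}(C_2)
}
\]
induced by $\cO_1 \to \cO_2$, the proposition reduces by two-out-of-three to the following key claim: for every conilpotent (curved) $\cC$-coalgebra $C$, the natural vertical map $\Omega_{\cO_1}(C) \to \Omega_{\cO_2}(C)$ is itself a quasi-isomorphism of chain complexes.

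To prove the key claim, I would use the decreasing arity filtration
\[
F^p\Omega_\cO(C) = \bigoplus_{n \geq p} \cO(n) \otimes_{S_n} C^{\otimes n}.
\]
Both pieces of the cobar differential preserve this filtration: the internal piece obviously does, and the piece built from the coproduct on $C$ and the (curved) Koszul twisting morphism $\cC \to \cO$ sends $\cO(n) \otimes C^{\otimes n}$ into strictly higher filtration since the reduced coproduct on a conilpotent coalgebra has no linear component. Thus on the $E^0$-page of the associated spectral sequence only the internal differentials remain, and the induced map on $E^0$ is, arity by arity, the map
\[
(\cO_1(n) \to \cO_2(n)) \otimes_{S_n} \mathrm{id}_{C^{\otimes n}},
\]
which is a quasi-isomorphism by hypothesis together with the facts that in characteristic zero the functor $(-)_{S_n}$ is exact and that tensoring with a chain complex preserves quasi-isomorphisms.

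The main obstacle is convergence of this spectral sequence, since the arity filtration is neither bounded above nor below in general. I would handle this by reducing inductively via the coradical filtration of $C$: since $C$ is conilpotent, $\Omega_\cO(C) = \varinjlim_k \Omega_\cO(F^{\mathrm{cor}}_k C)$, filtered colimits preserve quasi-isomorphisms, and one inducts on $k$ via the five-lemma applied to the short exact sequence
\[
0 \to \Omega_\cO(F^{\mathrm{cor}}_{k-1} C) \to \Omega_\cO(F^{\mathrm{cor}}_k C) \to Q_k \to 0.
\]
On $Q_k$ the arity filtration becomes convergent because the coradical depth provides an explicit bound on how many factors from the top layer $F^{\mathrm{cor}}_k C/F^{\mathrm{cor}}_{k-1} C$ can appear (the reduced coproduct strictly lowers this count), so in each fixed total degree only finitely many arities contribute and the spectral sequence converges strongly, yielding a quasi-isomorphism on $Q_k$ and completing the induction. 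The curved case requires only notational changes, as the curvature term and the Koszul twisting morphism are compatible with the coradical filtration of $C$.
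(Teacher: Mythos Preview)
Your overall strategy coincides with the paper's: reduce by two-out-of-three to showing that each vertical map $\Omega_{\cO_1}(C)\to\Omega_{\cO_2}(C)$ is a quasi-isomorphism, and establish this by a filtration argument. The paper is shorter only because it invokes a ready-made filtration from \cite[Proposition~2.3]{Val} that is complete and bounded below, so that a filtered quasi-isomorphism on associated gradeds (which is just $(\cO_1(n)\to\cO_2(n))\otimes_{S_n}C^{\otimes n}$) immediately gives the result, with no secondary induction needed.

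There is, however, a genuine gap in your treatment of the curved case. The decreasing arity filtration $F^p\Omega_\cO(C)=\bigoplus_{n\ge p}\cO(n)\otimes_{S_n}C^{\otimes n}$ is \emph{not} preserved by the cobar differential when curvature is present: the curvature contribution applies $\theta$ to one tensor factor and removes it, sending $\cO(n)\otimes C^{\otimes n}$ into $\cO(n-1)\otimes C^{\otimes(n-1)}$ (see the middle summand of the explicit cobar formula in Section~\ref{sect:examples}). Hence in the curved setting your arity spectral sequence is not even defined. Your closing sentence, that the curvature term is ``compatible with the coradical filtration of $C$'', is true but does not repair this: in your argument the coradical filtration enters only to handle convergence, while the identification of the $E^0$-page still rests on the arity filtration, and that is precisely the step that fails. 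The fix is to drop the arity filtration altogether and work from the outset with a coradical-type filtration on $\Omega_\cO(C)$, as the paper effectively does; such a filtration is bounded below, complete, stable under every piece of the cobar differential (the curvature term can only lower total coradical weight), and leaves only the internal differentials on the associated graded.
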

\begin{proof}
Consider a commutative diagram
\[
\xymatrix{
\Omega_{\cO_1}(C_1) \ar[r] \ar[d] & \Omega_{\cO_1}(C_2) \ar[d] \\
\Omega_{\cO_2}(C_1) \ar[r] & \Omega_{\cO_2}(C_2).
}
\]

The morphisms
\[\Omega_{\cO_1}(C_i)\to \Omega_{\cO_2}(C_i)\]
are filtered quasi-isomorphisms where the filtration is defined as in \cite[Proposition 2.3]{Val}. The filtration is also complete and bounded below and hence the morphisms $\Omega_{\cO_1}(C_i)\rightarrow \Omega_{\cO_2}(C_i)$ are quasi-isomorphisms.

Therefore, the top morphism is a quasi-isomorphism iff the bottom morphism is a quasi-isomorphism.
\end{proof}

\begin{prop}
Suppose $\cC\rightarrow \cO$ is a (curved) Koszul twisting morphism. Then the adjunction
\[\adj{\Omega\colon \coalg_{\cC}}{\alg_{\cO}\colon \B}\]
descends to an adjoint equivalence of $\infty$-categories
\[\adj{\Omega\colon \coalg_{\cC}[\kos^{-1}]}{\alg_{\cO}[\qis^{-1}]\colon \B}.\]
\label{prop:koszulduality}
\end{prop}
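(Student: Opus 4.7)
The plan is to verify the adjunction descends to the localizations, then check that the unit and counit become pointwise weak equivalences, and finally conclude that we obtain an adjoint equivalence of $\infty$-categories.

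First I would show that both functors preserve weak equivalences. By the very definition of $\kos$, the cobar functor $\Omega\colon \coalg_{\cC}\to \alg_{\cO}$ sends $\kos$-equivalences to quasi-isomorphisms. For the bar functor, suppose $A\to A'$ is a quasi-isomorphism in $\alg_{\cO}$; then using the natural quasi-isomorphism $\Omega\B A\stackrel{\sim}\to A$ together with two-out-of-three in the commutative square, the map $\Omega\B A\to \Omega\B A'$ is a quasi-isomorphism, i.e.\ $\B A\to \B A'$ lies in $\kos$. Hence by the universal property of localization we obtain induced functors between $\coalg_{\cC}[\kos^{-1}]$ and $\alg_{\cO}[\qis^{-1}]$.

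Next I would check that the unit and counit of the underlying 1-categorical adjunction become equivalences in the localizations. The counit $\epsilon_A\colon \Omega\B A\to A$ is already a quasi-isomorphism by standard bar-cobar theory, so it becomes an equivalence in $\alg_{\cO}[\qis^{-1}]$. For the unit $\eta_C\colon C\to \B\Omega C$, one of the triangle identities gives
\[\epsilon_{\Omega C}\circ \Omega(\eta_C)=\id_{\Omega C}.\]
Since $\epsilon_{\Omega C}$ is a quasi-isomorphism, $\Omega(\eta_C)$ is also a quasi-isomorphism by two-out-of-three, which is exactly the statement that $\eta_C$ lies in $\kos$. So $\eta_C$ becomes an equivalence in $\coalg_{\cC}[\kos^{-1}]$.

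Finally, since the 1-categorical unit and counit are natural transformations whose components lie in the corresponding classes of weak equivalences, the images of the composites $\B\Omega$ and $\Omega\B$ are naturally equivalent to the identity functors after localization. This means the induced functors are mutually inverse equivalences of $\infty$-categories, and any equivalence of $\infty$-categories can be promoted to an adjoint equivalence. The main technical subtlety to get right is the bookkeeping of curved conilpotent cofibrant coalgebras when $\cC$ is a curved cooperad, but since the bar-cobar triangle identities and the natural quasi-isomorphism $\Omega\B A\stackrel{\sim}\to A$ are established in this generality in \cite{HM}, the argument above goes through unchanged.
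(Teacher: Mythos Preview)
Your proposal is correct and follows essentially the same approach as the paper: both arguments check that $\B$ and $\Omega$ preserve weak equivalences via the counit square, invoke \cite[Proposition 5.2.8]{HM} for the counit $\Omega\B A\to A$, and deduce the unit is in $\kos$ from the triangle identity $\epsilon_{\Omega C}\circ \Omega(\eta_C)=\id_{\Omega C}$. The paper phrases the last step as ``the composite $\Omega C\to \Omega\B\Omega C\to \Omega C$ is the identity,'' which is exactly your triangle-identity argument.
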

\begin{proof}
First of all, we have to show that $\B$ and $\Omega$ preserve weak equivalences. Indeed, by definition $\Omega$ creates weak equivalences. Now suppose $A_1\rightarrow A_2$ is a quasi-isomorphism and consider the morphism $\B A_1\rightarrow \B A_2$. We have a commutative diagram
\[
\xymatrix{
\Omega\B A_1 \ar[d] \ar[r] & \Omega\B A_2 \ar[d] \\
A_1 \ar[r] & A_2
}
\]
where the two vertical morphisms are quasi-isomorphisms by \cite[Proposition 5.2.8]{HM} and the bottom morphism is a quasi-isomorphism by assumption. Therefore, $\Omega \B A_1\rightarrow \Omega\B A_2$ is also a quasi-isomorphism and hence by definition $\B A_1\rightarrow \B A_2$ is a weak equivalence.

Therefore, we get an adjunction $\Omega\dashv \B$ of the underlying $\infty$-categories. To show that it is an adjoint equivalence we have to show that the unit and the counit of the adjunction are weak equivalences. Indeed, again by \cite[Proposition 5.2.8]{HM} the counit of the adjunction is a weak equivalence. Next, suppose $C$ is a (curved) $\cC$-coalgebra and consider the unit of the adjunction $C\rightarrow \B\Omega C$.
To show that it is a weak equivalence, consider the morphisms
\[\Omega C\to \Omega\B\Omega C\to \Omega C.\]
By construction the composite morphism is the identity; the second morphism is the counit of the adjunction hence is a quasi-isomorphism. Therefore, the first morphism is a quasi-isomorphism and hence $C\rightarrow \B\Omega C$ is a weak equivalence.
\end{proof}

We introduce the notation
\[\ialg_{\cO} = \alg_{\cO}[\qis^{-1}]\]
for the $\infty$-category of $\cO$-algebras and by the previous proposition it can also be modeled by the relative category $(\coalg_{\cC}, \kos)$.

Suppose $\cO_1\rightarrow \cO_2$ is a morphism of operads which is a quasi-isomorphism in each arity. The forgetful functor
\[\alg_{\cO_2}\to \alg_{\cO_1}\]
automatically preserves quasi-isomorphisms and hence induces a functor on the level of $\infty$-categories.

\begin{prop}
Let $\cO_1\rightarrow \cO_2$ be a quasi-isomorphism of operads. Then the forgetful functor
\[\alg_{\cO_2}\to \alg_{\cO_1}\]
induces an equivalence of $\infty$-categories
\[\ialg_{\cO_2}\to \ialg_{\cO_1}\]
\label{prop:operadforgetful}
\end{prop}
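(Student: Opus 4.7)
The plan is to apply Proposition \ref{prop:luriebarrbeck} to the commutative triangle
\[
\xymatrix{
\ialg_{\cO_2} \ar_{G_1}[dr] \ar^{F}[rr] && \ialg_{\cO_1} \ar^{G_2}[dl]\\
& \iCh &
}
\]
where $F$ is the forgetful functor under consideration and $G_1$, $G_2$ are the forgetful functors to chain complexes. The commutativity is clear, since forgetting all the operadic structure factors through forgetting down to $\cO_1$-algebras.

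The first step is to identify the left adjoints. At the $1$-categorical level, the free algebra functor $V\mapsto \cO_i(V)=\bigoplus_{n\geq 0}(\cO_i(n)\otimes V^{\otimes n})_{S_n}$ is left adjoint to $G_i$. Working over a field of characteristic zero, symmetric coinvariants are exact and $V^{\otimes n}$ computes the derived tensor power for every $V\in\Ch$, so $\cO_i(-)$ preserves quasi-isomorphisms and therefore descends to a functor $\iCh\to \ialg_{\cO_i}$, which one checks is left adjoint to $G_i$ at the $\infty$-level. The natural transformation $G_2^L\to F G_1^L$ is then, on an object $V\in\iCh$, the map of underlying complexes $\cO_1(V)\to \cO_2(V)$ induced arity-by-arity by $\cO_1(n)\to \cO_2(n)$. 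Since each of these is a quasi-isomorphism by hypothesis, and the coinvariants $(\cdot\otimes V^{\otimes n})_{S_n}$ are exact in characteristic zero, the total map is a quasi-isomorphism. This verifies the left Beck--Chevalley condition.

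It remains to verify the other two hypotheses. The forgetful functors $G_i\colon \ialg_{\cO_i}\to \iCh$ are conservative essentially by definition: the class of weak equivalences on $\alg_{\cO_i}$ was \emph{defined} to consist of those morphisms which become quasi-isomorphisms after forgetting to $\Ch$. For the preservation of geometric realizations by $G_i$, one uses that sifted colimits of $\cO$-algebras are computed on the level of underlying chain complexes; this is standard for operadic algebras in a nice enough symmetric monoidal model category and can also be read off from the fact that each functor $V\mapsto (\cO_i(n)\otimes V^{\otimes n})_{S_n}$ preserves sifted colimits of complexes in characteristic zero, so that the monad $\cO_i(-)$ preserves sifted colimits and hence $G_i$, being monadic, does too.

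With all four hypotheses verified, Proposition \ref{prop:luriebarrbeck} implies that $F\colon\ialg_{\cO_2}\to \ialg_{\cO_1}$ is an equivalence of $\infty$-categories. The only genuinely delicate point is the Beck--Chevalley step, which amounts to the observation that free $\cO_i$-algebras on a complex are already homotopically meaningful in characteristic zero; without this, one would need to first replace $V$ by a cofibrant resolution, which would obscure the comparison of $\cO_1(V)$ and $\cO_2(V)$.
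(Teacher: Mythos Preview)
Your argument is correct and complete. The paper's own proof is different and much shorter: it simply cites \cite[Theorem 4.1]{BM} and \cite[Theorem 4.7.4]{Hin} for the statement that the induction/restriction pair between $\alg_{\cO_1}$ and $\alg_{\cO_2}$ is a Quillen equivalence, and then passes to the underlying $\infty$-categories.

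Your approach instead applies the Barr--Beck criterion (Proposition \ref{prop:luriebarrbeck}) internally to the paper, over the base $\iCh$, together with Proposition \ref{prop:forgetsifted} for the sifted-colimit hypothesis. The advantage is that it is self-contained, using only machinery already set up in the paper, and it makes transparent exactly where the hypothesis is consumed: the Beck--Chevalley map $\cO_1(V)\to\cO_2(V)$ is a quasi-isomorphism precisely because $\cO_1(n)\to\cO_2(n)$ is one and coinvariants are exact in characteristic zero. The paper's approach, by contrast, outsources the work to the model-categorical literature; those cited results are in some respects more general (they do not need characteristic zero, at the cost of cofibrancy hypotheses on the operads), but invoking them obscures the simple mechanism you have exposed. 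One minor comment: the phrase ``one checks is left adjoint to $G_i$ at the $\infty$-level'' deserves a reference or a line of justification---the cleanest way is to observe that the free/forgetful pair is a Quillen adjunction for the transferred model structure on $\alg_{\cO_i}$, hence derives to an adjunction of $\infty$-categories, and that the left derived free functor agrees with the underived one since, as you note, $\cO_i(-)$ already preserves quasi-isomorphisms.
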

\begin{proof}
Indeed, by \cite[Theorem 4.1]{BM} and \cite[Theorem 4.7.4]{Hin} the induction/restriction functors provide a Quillen equivalence between $\alg_{\cO_1}$ and $\alg_{\cO_2}$ and hence induce an equivalence of the underlying $\infty$-categories.
\end{proof}

Finally, recall that the forgetful functor $\alg_{\cO}\rightarrow \Ch$ creates sifted colimits since the category $\alg_{\cO}$ can be written as the category of algebras over a monad which preserves sifted colimits. The same statement is true on the level of $\infty$-categories.

\begin{prop}
Let $I$ be a set and $\cO$ an $I$-colored dg operad. The forgetful functor
\[\ialg_{\cO}\to\Fun(I, \iCh)\]
creates sifted colimits.
\label{prop:forgetsifted}
\end{prop}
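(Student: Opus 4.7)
The plan is to model $\ialg_{\cO}$ as algebras over a monad on $\Fun(I,\iCh)$ that preserves sifted colimits, and then apply the general $\infty$-categorical theory of monads.

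Let $U\colon\alg_{\cO}\to \Ch^I$ be the 1-categorical forgetful functor with left adjoint the free $\cO$-algebra functor $F$, given in each color by a Schur-type formula involving $\cO$, tensor powers, and $S_n$-coinvariants. Because $k$ has characteristic zero, symmetric coinvariants are exact, so $F$ preserves quasi-isomorphisms. Thus both $F$ and $U$ descend to an adjunction of $\infty$-categories
\[F\colon \Fun(I,\iCh) \rightleftarrows \ialg_{\cO}\colon U,\]
and the induced monad $T=UF$ on $\Fun(I,\iCh)$ is computed by the same Schur formula.

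I would then verify that this $\infty$-categorical adjunction $F\dashv U$ satisfies the hypotheses of the $\infty$-categorical Barr--Beck theorem \cite[\S4.7.3]{HA}. Conservativity of $U$ is immediate since an arrow of $\ialg_{\cO}$ is an equivalence iff it is a quasi-isomorphism on the underlying complexes. For preservation of $U$-split geometric realizations, I would use the transferred combinatorial model structure on $\alg_{\cO}$ (cf. \cite{Hin}): any simplicial object of $\ialg_{\cO}$ can be represented by a Reedy cofibrant simplicial object of $\alg_{\cO}$, whose strict 1-categorical realization models the $\infty$-colimit; the classical forgetful $\alg_{\cO}\to \Ch^I$ preserves this realization because the 1-categorical monad creates sifted colimits, and the result is a homotopy realization in $\Fun(I,\iCh)$ because $F$ preserves quasi-isomorphisms.

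Barr--Beck then identifies $\ialg_{\cO}\simeq \lmod_T(\Fun(I,\iCh))$. The monad $T$ preserves sifted colimits in $\Fun(I,\iCh)$, as each summand is a composite of tensor products (preserving sifted colimits variablewise in $\iCh$) with exact coinvariants. A standard consequence of Lurie's theory of monadic descent \cite[\S4.2.3]{HA} then yields that the forgetful functor from $\lmod_T$ creates sifted colimits, proving the proposition. The main obstacle in this approach is verifying the Barr--Beck hypothesis that $U$ preserves $U$-split geometric realizations; the other ingredients (preservation of quasi-isomorphisms and of sifted colimits by the Schur monad) are essentially algebraic and depend only on the characteristic-zero hypothesis on $k$.
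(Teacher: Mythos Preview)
Your approach is genuinely different from the paper's. The paper's proof is two lines: by \cite[Proposition 1.3.4.24]{HA} it suffices to check that the $1$-categorical forgetful functor $\alg_{\cO}\to\Fun(I,\Ch)$ creates \emph{homotopy} sifted colimits, and this is \cite[Proposition 7.8]{PS} (Pavlov--Scholbach). No monadicity is invoked.

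Your route via the $\infty$-categorical Barr--Beck theorem is in principle valid, but there is a real gap in the step you yourself flag. The argument you give for ``$U$ preserves $U$-split realizations'' does not actually use $U$-splitness: you take a Reedy cofibrant model $X_\bullet$ in $\alg_{\cO}$, note that $U$ preserves the strict realization, and then assert that $|UX_\bullet|$ is a homotopy realization in $\Ch^I$ ``because $F$ preserves quasi-isomorphisms''. That last implication is not justified: $F$ being homotopical tells you nothing about whether $UX_\bullet$ is Reedy cofibrant in $\Ch^I$, and $U$ does not preserve cofibrations in the transferred model structure. So as written, the crucial identification of strict and homotopy realization on the $\Ch^I$ side is missing. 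Moreover, if this argument \emph{did} work it would already show that $U$ preserves all geometric realizations, which is the proposition itself; the Barr--Beck detour (establishing monadicity and then that $T$ preserves sifted colimits) would then be superfluous.

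The cleanest repair is exactly the paper's: invoke a general comparison between model-categorical homotopy colimits and $\infty$-categorical colimits, and then cite a result such as \cite[Proposition 7.8]{PS} that the forgetful functor out of $\alg_{\cO}$ preserves homotopy sifted colimits. Alternatively, one can cite a rectification theorem identifying $\ialg_{\cO}$ with $\infty$-operadic $\cO$-algebras in $\iCh$ and then use \cite[Proposition 3.2.3.1]{HA}; but either way the substantive input lies outside the Barr--Beck argument you sketched.
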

\begin{proof}
By \cite[Proposition 1.3.4.24]{HA} we just need to show that the forgetful functor $\alg_{\cO}\rightarrow \Fun(I, \Ch)$ creates homotopy sifted colimits which follows by \cite[Proposition 7.8]{PS}.
\end{proof}

\subsection{Examples}
\label{sect:examples}

Let us show how the bar-cobar duality works for unital algebras over the associative and Poisson operads to compare it to the classical bar-cobar duality.

\begin{figure}
\begin{tikzpicture}
\node[w] (v) at (0, 0) {$u$};
\node (root) at (0, -1) {};
\draw (v) edge (root);
\end{tikzpicture}
\qquad
\begin{tikzpicture}
\node[w] (v) at (0, 0) {$m$};
\node (root) at (0, -1) {};
\node (v1) at (-0.5, 1) {};
\node (v2) at (0.5, 1) {};
\draw (v) edge (root);
\draw (v) edge (v1);
\draw (v) edge (v2);
\end{tikzpicture}
\caption{Generating operations of $\Ass^{\un}$.}
\label{fig:assgenerating}
\end{figure}

We begin with the case of the associative operad considered in \cite[Section 6]{HM}. Let $\cO=\Ass^{\un}$ be the operad governing unital associative algebras. The operad $\Ass^{\un}$ is quadratic and is generated by the symmetric sequence $V$ with $V(0) = k$ and $V(2) = k[S_2]$ whose elements are shown on Figure \ref{fig:assgenerating}.

\begin{figure}
\begin{tikzpicture}
\node[w] (v0l) at (-1, 0) {$m$};
\node (rootl) at (-1, -1) {};
\node[w] (v1l) at (-1.5, 0.7) {$m$};
\node (v11l) at (-1.8, 1.4) {};
\node (v12l) at (-1.2, 1.4) {};
\node (v2l) at (-0.6, 1.4) {};
\draw (v0l) edge (rootl);
\draw (v0l) edge (v1l);
\draw (v1l) edge (v11l);
\draw (v1l) edge (v12l);
\draw (v0l) edge (v2l);

\node at (0, 0) {$=$};

\node[w] (v0r) at (1, 0) {$m$};
\node (rootr) at (1, -1) {};
\node[w] (v2r) at (1.5, 0.7) {$m$};
\node (v1r) at (0.6, 1.4) {};
\node (v21r) at (1.2, 1.4) {};
\node (v22r) at (1.8, 1.4) {};
\draw (v0r) edge (rootr);
\draw (v0r) edge (v1r);
\draw (v0r) edge (v2r);
\draw (v2r) edge (v21r);
\draw (v2r) edge (v22r);
\end{tikzpicture}
\qquad
\begin{tikzpicture}
\node[w] (vl) at (-1, 0) {$m$};
\node (rootl) at (-1, -1) {};
\node[w] (v1l) at (-1.5, 1) {$u$};
\node (v2l) at (-0.5, 1) {};
\draw (vl) edge (rootl);
\draw (vl) edge (v1l);
\draw (vl) edge (v2l);

\node at (0, 0) {$=$};

\node (vr) at (1, 1) {};
\node (rootr) at (1, -1) {};
\draw (vr) edge (rootr);
\end{tikzpicture}
\qquad
\begin{tikzpicture}
\node[w] (vl) at (-1, 0) {$m$};
\node (rootl) at (-1, -1) {};
\node (v1l) at (-1.5, 1) {};
\node[w] (v2l) at (-0.5, 1) {$u$};
\draw (vl) edge (rootl);
\draw (vl) edge (v1l);
\draw (vl) edge (v2l);

\node at (0, 0) {$=$};

\node (vr) at (1, 1) {};
\node (rootr) at (1, -1) {};
\draw (vr) edge (rootr);
\end{tikzpicture}
\caption{Relations in $\Ass^{\un}$.}
\label{fig:assrelations}
\end{figure}

The relations in $\Ass^{\un}$ have the form shown on Figure \ref{fig:assrelations}. This gives an inhomogeneous quadratic-linear-constant presentation of the operad $\Ass^{\un}$. Given such an operad $\cO$, we denote by $q\cO$ the operad with the same generators and where we only keep the quadratic part of the relations. Recall that the underlying graded cooperad of the Koszul dual is defined from the quadratic part of the relations, the differential uses the linear part and the curving comes from the constant part. In the relations we have there are no linear terms, so the Koszul dual cooperad coincides with the Koszul dual cooperad of the quadratic operad $q\Ass^{\un}$ equipped with a curving. From the relations we see that
\[q\Ass^{\un} \cong \Ass\oplus \E_0,\]
where $\E_0$ is the operad governing complexes together with a distinguished vector and where $\oplus$ refers to the product in the category of operads.

\begin{figure}
\begin{tikzpicture}
\node[w] (v) at (0, 0) {$\Delta$};
\node (root) at (0, 1) {};
\node[w] (v1) at (-0.5, -1) {$u$};
\node (v2) at (0.5, -1) {};
\draw (v) edge (root);
\draw (v) edge (v1);
\draw (v) edge (v2);
\end{tikzpicture}
\qquad
\begin{tikzpicture}
\node[w] (v) at (0, 0) {$\Delta$};
\node (root) at (0, 1) {};
\node (v1) at (-0.5, -1) {};
\node[w] (v2) at (0.5, -1) {$u$};
\draw (v) edge (root);
\draw (v) edge (v1);
\draw (v) edge (v2);
\end{tikzpicture}
\caption{Curving on $(\Ass^{\un})^{\ish}$.}
\label{fig:asscurving}
\end{figure}

By \cite[Proposition 6.1.4]{HM} the Koszul dual of $q\Ass^{\un}$ is given by
\[(q\Ass^{\un})^\ish = \Ass^{\ish}\oplus \coE_0\{-1\},\]
where $\coE_0 \cong \E_0$ is the cooperad of complexes together with a functional and $\oplus$ now denotes the product in the category of conilpotent cooperads, i.e. the conilpotent cooperad cogenerated by $\Ass^{\ish}$ and $\coE_0$. We define
\[(\Ass^{\un})^\ish = (q\Ass^{\un})^\ish\]
as graded cooperads. The degree $-2$ part of $(\Ass^{\un})^\ish(1)$ is the two-dimensional vector space spanned by trees shown in Figure \ref{fig:asscurving} and we define the curving $\theta\colon (\Ass^{\un})^\ish(1)\rightarrow k[2]$ to take value $-1$ on both of these. Let us denote
\[\coAss^\theta\{1\} = (\Ass^{\un})^\ish.\]
The cooperad $\coAss^\theta$ governs coassociative coalgebras $C$ together with a coderivation $\d\colon C\rightarrow C$ of degree $1$ and a curving $\theta\colon C\rightarrow k[2]$ satisfying the equations
\begin{align*}
\d^2 x &= \theta(x_{(1)}) x_{(2)} - x_{(1)} \theta(x_{(2)}) \\
\theta(\d x) &= 0
\end{align*}
where
\[\Delta(x) = x_{(1)}\otimes x_{(2)}.\]

Given a unital associative dg algebra $A$, its bar complex is
\[\B(A) = \overline{\T}_\bullet(A[1]\oplus k[2])\]
equipped with the following differential. Let us denote elements of the bar complex by $[x_1|...|x_n]$ with elements of $k[2]$ denoted by $\ast$. Then
\begin{align*}
\d[x_1|...|x_n] &= \sum_{i=1}^n(-1)^{\sum_{q=1}^{i-1}(|x_q|+1)}[x_1|...|\d x_i|...|x_n] \\
&+\sum_{i=1}^{n-1}(-1)^{\sum {q=1}^i(|x_q|+1)}[x_1|...|x_ix_{i+1}|...|x_n]
\end{align*}
with $\d\ast = 1\in A$ and $\ast\cdot x = x\cdot \ast = 0$. The curving is given by $\theta([\ast]) = 1$.

\begin{remark}
Given a unital associative dg algebra $A$ equipped with a semi-augmentation, Positselski in \cite[Section 6.1]{Pos} considers the bar construction to be $\T_\bullet(\overline{A}[1])$ equipped with a natural curving and the standard bar differential.
\end{remark}

Similarly, given a curved coalgebra $C$, its cobar complex is
\[\Omega(C) = \T^\bullet(C[-1])\]
whose elements we denote by $[x^1|...|x^n]$ for $x^i\in C$ such that the differential is
\begin{align*}
\d[x^1|...|x^n] &= \sum_{i=1}^n (-1)^{\sum_{q=1}^{i-1}(|x^q|+1)}[x^1|...|\d x^i|...|x^n] \\
&+ \sum_{i=1}^n (-1)^{\sum_{q=1}^{i-1}(|x^q|+1)}[x^1|...|\theta(x^i)x^{i+1}|...|x^n] \\
&+ \sum_{i=1}^n(-1)^{\sum_{q=1}^{i-1}(|x^q|+1)+|x^i_{(1)}|+1}[x^1|...|x^i_{(1)}|x^i_{(2)}|...|x^n]
\end{align*}

Let us similarly work out the Koszul dual of the operad of unital $\P_n$-algebras. Recall that a $\P_n$-algebra is a dg Poisson algebra whose Poisson bracket has degree $1-n$. We denote by $\cO=\P_n^{\un}$ the operad controlling such algebras. It is generated by the symmetric sequence $V$ with $V(0) = k\cdot u$ and $V(2) = k\cdot m \oplus \sgn_2^{\otimes n}\cdot \{\}$ with the following relations:
\begin{align*}
a(bc) &= (ab)c \\
\{\{a, b\}, c\} &= (-1)^{n + |b||c|+1}\{\{a, c\}, b\} + (-1)^{|a|(|b|+|c|)+1}\{\{b, c\}, a\} \\
\{a, bc\} &= \{a, b\}c + (-1)^{|b||c|}\{a, c\}b \\
1\cdot a &= a \\
\{1, a\} &= 0.
\end{align*}

In particular, we see again that
\[q\P_n^{\un} \cong \P_n\oplus \E_0.\]

Note that the Koszul dual of the operad of non-unital $\P_n$-algebras is $\P_n^\ish\cong \coP_n\{n\}$. Therefore, by \cite[Proposition 6.1.4]{HM} the Koszul dual of $\P_n^{\un}$ is
\[(\P_n^{\un})^\ish\cong \coP_n\{n\}\oplus \coE_0\{-1\}\]
equipped with the curving $\theta\colon (\P_n^{\un})^\ish(1)\rightarrow k[2]$ which sends the tree

\begin{figure}[h]
\begin{tikzpicture}
\node[w] (v) at (0, 0) {$\delta$};
\node (root) at (0, 1) {};
\node[w] (v1) at (-0.5, -1) {$u$};
\node (v2) at (0.5, -1) {};
\draw (v) edge (root);
\draw (v) edge (v1);
\draw (v) edge (v2);
\end{tikzpicture}
\end{figure}

to $-1$, where $\delta$ is the cobracket in $\coP_n\{n\}$. We denote
\[\coP_n^\theta\{n\} = (\P_n^{\un})^\ish.\]

A curved coalgebra $C$ over the cooperad $\coP_n^\theta$ is given by the following data:
\begin{itemize}
\item A cocommutative comultiplication on $C$.

\item A cobracket $\delta\colon C\rightarrow C\otimes C[1-n]$ for which we use Sweedler's notation
\[\delta(x) = x^{\delta}_{(1)}\otimes x^{\delta}_{(2)}\]
which satisfies the coalgebraic version of the Jacobi identity.

\item A coderivation $\d\colon C\rightarrow C$ of degree $1$.

\item A curving $\theta\colon C\rightarrow k[1+n]$.
\end{itemize}
Together these satisfy the relations
\begin{align*}
\d^2 x &= \theta(x^{\delta}_{(1)})x^{\delta}_{(2)} \\
\theta(\d x) &= 0.
\end{align*}

\subsection{Hopf operads}

\label{sect:monoidal}

Recall that a \defterm{Hopf operad} is an operad in counital cocommutative coalgebras. Dually, a \defterm{Hopf cooperad} is a cooperad in unital commutative algebras. Alternatively, recall that the category of symmetric sequences has two tensor structures: the composition product which is merely monoidal and which we use to define operads and cooperads and the Hadamard product which is symmetric monoidal. The Hadamard tensor product defines a symmetric monoidal structure on the category of cooperads and one can define a Hopf cooperad to be a unital commutative algebra in the category of cooperads.

One can similarly define a notion of a curved Hopf cooperad to be a unital commutative algebra in the category of curved cooperads.

Given a Hopf operad $\cO$ and two $\cO$-algebras $A_1, A_2$ the tensor product of the underlying complexes is also an $\cO$-algebra using
\[\cO(n)\otimes (A\otimes B)^{\otimes n}\rightarrow \cO(n)\otimes A^{\otimes n}\otimes \cO(n)\otimes B^{\otimes n}\rightarrow A\otimes B.\]
Dually, one defines the tensor product of two (curved or dg) $\cC$-coalgebras for a Hopf cooperad $\cC$ to be the tensor product of the underlying chain complexes.

The operads $\Ass^{\un}$ and $\P_n^{\un}$ we are interested in are Hopf operads. For instance, for $\P_n^{\un}$ we have
\[\Delta(m) = m\otimes m\qquad \Delta(\{\}) = \{\}\otimes m + m\otimes \{\}.\]
By duality we get Hopf cooperad structures on $\coAss^{\cu}=(\Ass^{\un})^*$ and $\coP_n^{\cu}=(\P_n^{\un})^*$, the cooperads of counital coassociative coalgebras and counital $\P_n$-coalgebras respectively.

Note, however, that the curved cooperad $\coAss^\theta$ admits no Hopf structure. Indeed, the degree zero part of $\coAss^\theta(0)$ is trivial and hence one cannot define a unit. To remedy this problem, we introduce the following modification. Given an operad $\cO$ we denote by $\cO^{\un} = \cO\oplus k$ the symmetric sequence which coincides with $\cO$ in arities at least 1 and which is $\cO(0)\oplus k$ in arity zero. Similarly, we define the symmetric sequence $\cC^{\cu}=\cC\oplus k$ for a cooperad $\cC$.

\begin{defn}
A \defterm{Hopf unital structure} on an operad $\cO$ is the structure of a Hopf operad on $\cO^{\un}$ such that the natural inclusion $\cO\rightarrow \cO^{\un}$ is a morphism of operads and such that the counit on $\cO^{\un}(0)=\cO(0)\oplus k$ is given by the projection on the second factor.
\label{def:hopfunital}
\end{defn}

\begin{defn}
A \defterm{Hopf counital structure} on a (curved) cooperad $\cC$ is the structure of a (curved) Hopf cooperad on $\cC^{\cu}$ such that the natural projection $\cC^{\cu}\rightarrow \cC$ is a morphism of (curved) cooperads and such that the unit on $\cC^{\cu}(0) = \cC(0)\oplus k$ is given by inclusion into the second factor.
\label{def:hopfcounital}
\end{defn}

For instance, $\Ass$ and $\P_n$ have Hopf unital structures given by the Hopf operads $\Ass^{\un}$ and $\P_n^{\un}$. Similarly, the curved cooperads $\coAss^\theta$ and $\coP_n^\theta$ have Hopf counital structures given by the cooperads $\coAss^{\theta, \cu}$ and $\coP_n^{\theta, \cu}$ respectively, where, for instance, $\coAss^{\theta, \cu}$ governs curved counital coassociative coalgebras.

Given a Hopf operad $\cO^{\un}$, the counits assemble to give a map of operads $\cO^{\un}\rightarrow \Comm^{\un}$. We can equivalently define a Hopf unital structure on an operad $\cO$ to be a Hopf operad $\cO^{\un}$ together with an isomorphism of operads \[\cO\cong \cO^{\un}\times_{\Comm^{\un}} \Comm.\]
Using the morphism $\cO^{\un}\rightarrow \Comm^{\un}$ we see that the unital commutative algebra $k$ admits a natural $\cO^{\un}$-algebra structure.

\begin{defn}
Let $\cO^{\un}$ be a Hopf operad. An \defterm{augmented $\cO^{\un}$-algebra} is an $\cO^{\un}$-algebra $A$ together with a morphism of $\cO^{\un}$-algebras $A\rightarrow k$.
\end{defn}
Dually, for a Hopf cooperad $\cC^{\cu}$ we define the notion of a coaugmented $\cC^{\cu}$-coalgebra. We denote by $\alg_{\cO^{\un}}^{\aug}$ the category of augmented $\cO^{\un}$-algebras and by $\coalg_{\cC^{\cu}}^{\coaug}$ the category of coaugmented $\cC^{\cu}$-coalgebras. Note that a coaugmented $\cC^{\cu}$-coalgebra $C\rightarrow k$ is automatically assumed to be conilpotent in the sense that the non-counital coalgebra $\overline{C}$ is conilpotent.

\begin{lm}
Let $\cO$ be an operad with a Hopf unital structure. Then we have an equivalence of categories
\[\alg_{\cO}\cong \alg_{\cO^{\un}}^{\aug}.\]

Dually, if $\cC$ is a dg cooperad with a Hopf counital structure, then we have an equivalence of categories
\[\coalg_{\cC}\cong \coalg_{\cC^{\cu}}^{\coaug}.\]
\label{lm:addunit}
\end{lm}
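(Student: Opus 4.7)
The plan is to exhibit mutually inverse functors $F \colon \alg_{\cO} \rightleftarrows \alg_{\cO^{\un}}^{\aug} \colon G$, using the Hopf unital structure to make $k$ a canonical $\cO^{\un}$-algebra and hence to give meaning to augmentations. Recall from the discussion preceding the lemma that the counits of $\cO^{\un}(n)$ assemble into a map of operads $\cO^{\un} \to \Comm^{\un}$, which equips $k$ with an $\cO^{\un}$-algebra structure whose positive-arity operations are iterated multiplications; the isomorphism $\cO \cong \cO^{\un} \times_{\Comm^{\un}} \Comm$ says that $\cO(n) = \cO^{\un}(n)$ for $n \geq 1$, while $\cO(0) = 0$.

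First I would define the kernel functor $G(A,\epsilon) = \ker\epsilon$. Since $\epsilon \colon A \to k$ is a morphism of $\cO^{\un}$-algebras, for any $\mu \in \cO^{\un}(n) = \cO(n)$ with $n \geq 1$ and $x_i \in \ker\epsilon$ we have $\epsilon(\mu(x_1,\dots,x_n)) = \mu_{\Comm^{\un}}(0,\dots,0) = 0$, so $\ker\epsilon$ is closed under all operations of $\cO$ and inherits an $\cO$-algebra structure.

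Conversely, I would define $F(V) = V \oplus k \cdot 1$ as a chain complex, with augmentation the projection to $k$ and $\cO^{\un}$-algebra structure obtained by partial composition with the operadic unit $u \in \cO^{\un}(0)$. Explicitly, for $\mu \in \cO^{\un}(n)$ and inputs $v_i + c_i \cdot 1$,
\[
\mu(v_1 + c_1 \cdot 1,\dots, v_n + c_n \cdot 1) = \sum_{S \subseteq [n]} \Bigl(\prod_{i \notin S} c_i\Bigr) \cdot \mu_S\bigl(v_i : i \in S\bigr),
\]
where $\mu_S \in \cO^{\un}(|S|)$ denotes $\mu$ with $u$ substituted at the positions outside $S$. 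For $|S|\geq 1$ this uses the given $\cO$-algebra structure on $V$, while for $S = \emptyset$ the term lies in $\cO^{\un}(0) = k$ and contributes a scalar multiple of $1$. Checking that this formula respects operadic composition and the unit axiom is the main computational step; it reduces directly to the fact that $u$ is a two-sided unit in $\cO^{\un}$ and that $(\mu\circ \nu)_{S} = \mu_{S'}\circ \nu_{S''}$ for the appropriate subsets.

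Finally, I would verify that $F$ and $G$ are quasi-inverse. The composite $G\circ F$ gives back $V$ on the nose. For $F\circ G$, applied to $(A,\epsilon)$, the operadic unit picks out an element $1_A \in A$ with $\epsilon(1_A) = 1$ (this uses the condition on the counit of $\cO^{\un}(0)$ in Definition \ref{def:hopfunital}), giving a splitting $A \cong \ker\epsilon \oplus k\cdot 1_A$, and the defining formula for $F$ recovers the $\cO^{\un}$-structure on $A$ from that on $\ker\epsilon$. The dual statement for a cooperad $\cC$ with Hopf counital structure is formally identical: one sets $G^\vee(C,\gamma) = C/\gamma(k)$ and $F^\vee(D) = D \oplus k$, with coaugmentation the inclusion and $\cC^{\cu}$-structure determined by the counit of $\cC^{\cu}(0)$, and the same verifications apply. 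The only real obstacle is the bookkeeping in the formula for $F$; no further technical machinery is needed.
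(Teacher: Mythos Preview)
Your argument is correct and is essentially the paper's own proof written out more explicitly: both show that for an augmented $\cO^{\un}$-algebra $A$ one has $A \cong \overline{A}\oplus k$ with $\overline{A}=\ker\epsilon$ closed under all $\cO$-operations, and that the $\cO^{\un}$-structure on $A$ is recovered from the $\cO$-structure on $\overline{A}$ by substituting the distinguished arity-zero element $u$. One small slip: the fiber product description $\cO \cong \cO^{\un}\times_{\Comm^{\un}}\Comm$ does \emph{not} force $\cO(0)=0$ (it only identifies $\cO(0)$ with the kernel of the counit on $\cO^{\un}(0)$), so you should also check that the arity-zero operations $\cO(0)\to A$ land in $\ker\epsilon$---which follows immediately since the counit vanishes on $\cO(0)\subset\cO^{\un}(0)$.
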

\begin{proof}
Given an $\cO^{\un}$-algebra $A$, we have the unit
\[k\to A\]
coming from the inclusion $k\rightarrow \cO^{\un}(0)\cong \cO(0)\oplus k$ into the second factor. The coaugmentation $A\rightarrow k$ splits the unit and hence one has an isomorphism of complexes
\[A\cong \overline{A}\oplus k.\]

The $\cO^{\un}$-algebra structure on $A$ gives rise to the operations
\[\cO(0)\to A,\qquad \cO(n)\otimes \overline{A}^{\otimes m}\to A\]
where $m\leq n$.

The morphism $\cO(0)\rightarrow A\rightarrow k$ is zero by the definition of the counit on $\cO^{un}(0)$. The morphisms $\cO(n)\otimes \overline{A}^{\otimes m}\rightarrow A$ of $m < n$ are uniquely determined from the ones for $m=n$. But since $A\rightarrow k$ is a morphism of $\cO^{\un}$-algebras, the diagram
\[
\xymatrix{
\cO(n)\otimes \overline{A}^{\otimes n} \ar^{0}[d] \ar[r] & A \ar[d] \\
\cO(n) \ar[r] & k
}
\]
is commutative which implies that the composite
\[\cO(n)\otimes \overline{A}^{\otimes m}\rightarrow A\rightarrow k\]
factors through $\overline{A}$. Therefore, the augmented $\cO^{\un}$-algebra structure on $A$ is uniquely determined by the $\cO$-algebra structure on $\overline{A}$.

The statement for coalgebras is proved similarly.
\end{proof}

The same construction works for a curved cooperad $\cC$. Note that since $\cC^{\cu}$ is a Hopf cooperad, the category $\coalg_{\cC}$ inherits a natural symmetric monoidal structure. Explicitly, given two $\cC$-coalgebras $C_1, C_2$, the underlying graded vector space of their tensor product is defined to be $C_1\otimes C_2\oplus C_1\oplus C_2$.

\begin{remark}
The cooperads $\coAss$ and $\coP_n$ are already Hopf cooperads and this gives a \emph{different} symmetric monoidal structure on the category $\coalg_{\coP_n}$ which we will not consider in the paper.
\end{remark}

We will need a certain compatibility between the Hopf structure on $\cC^{\cu}$ and weak equivalences.
\begin{defn}
A Hopf unital structure on a (curved) cooperad $\cC$ is \defterm{admissible} if the tensor product functor
\[C\otimes -\colon \coalg_{\cC^{\cu}}^{\coaug}\to \coalg_{\cC^{\cu}}^{\coaug}\]
preserves weak equivalences for any $C\in \coalg_{\cC^{\cu}}^{\coaug}$.
\end{defn}

We are now going to show that some standard examples of Hopf cooperads are admissible.
\begin{prop}
One has the following quasi-isomorphisms of $\cO$-algebras for any pair $C_1, C_2$ of (curved) conilpotent $\cC$-coalgebras:
\begin{enumerate}
\item For $\cO=\Ass\{-1\}$ and $\cC=\coAss$
\[\Omega(C_1\oplus C_2\oplus C_1\otimes C_2)\longrightarrow \Omega(C_1)\oplus \Omega(C_2)\oplus \Omega(C_1)\otimes \Omega(C_2)[-1].\]

\item For $\cO=\Lie\{-1\}$ and $\cC=\coComm$
\[\Omega(C_1\oplus C_2\oplus C_1\otimes C_2)\longrightarrow \Omega(C_1)\oplus \Omega(C_2).\]

\item For $\cO=\P_n\{-n\}$ and $\cC=\coP_n$
\[\Omega(C_1\oplus C_2\oplus C_1\otimes C_2)\longrightarrow \Omega(C_1)\oplus \Omega(C_2)\oplus \Omega(C_1)\otimes \Omega(C_2)[-n].\]

\item For $\cO=\Ass^{\un}\{-1\}$ and $\cC=\coAss^\theta$
\[\Omega(C_1\oplus C_2\oplus C_1\otimes C_2)\longrightarrow \Omega(C_1)\otimes \Omega(C_2)[-1].\]

\item For $\cO=\P_n^{\un}\{-n\}$ and $\cC=\coP_n^\theta$
\[\Omega(C_1\oplus C_2\oplus C_1\otimes C_2)\longrightarrow \Omega(C_1)\otimes \Omega(C_2)[-n].\]
\end{enumerate}
\label{prop:barlaxmonoidal}
\end{prop}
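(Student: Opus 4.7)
My plan is to handle the five cases uniformly by constructing an explicit comparison morphism dictated by the Hopf coalgebra structure on $C_1\otimes C_2$, then to verify each is a quasi-isomorphism via a spectral sequence argument on a natural filtration whose associated graded reduces to a Künneth-type statement.

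First I would construct the maps. In each case $\Omega(C_1\otimes C_2) = \cO(C_1\oplus C_2 \oplus C_1\otimes C_2)$ as a free graded $\cO$-algebra, equipped with the cobar differential assembled from the internal differentials of $C_1,C_2$, the coproducts of $C_i$, and the Hopf-interaction terms coming from the unit of $\cC^{\cu}$. In cases (1), (3), (4), (5) the target carries a natural $\cO$-algebra structure: it is the underlying complex of the (non-unital part of the) unital tensor product of the algebras $\Omega(C_1)^+$ and $\Omega(C_2)^+$, where the shift $[-1]$ or $[-n]$ accounts for the fact that $\Ass\{-1\}$ and $\P_n\{-n\}$ are shifted operads. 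In case (2) the target is the direct sum of Lie algebras. The comparison morphism is the unique morphism of $\cO$-algebras whose restriction to generators sends $c_i\in C_i$ to its image under the coaugmentation $C_i\to\Omega(C_i)$, and sends $c_1\otimes c_2\in C_1\otimes C_2$ to the element obtained by applying the Koszul twisting morphism $\cC\to\cO$ to the Hopf-structural term.

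Next I would introduce a filtration by the total number of $C_1$-tensor factors appearing in the $\cO$-algebra generators. This filtration is bounded below, exhaustive, and complete on both sides, and the comparison morphism is filtered. By the standard convergence argument used in Proposition~\ref{prop:weindependent}, it suffices to check that the map is a quasi-isomorphism on the associated graded. On the associated graded, the only surviving contributions to the differential are the internal differentials of $C_1,C_2$ and the portions of the cobar differential that preserve the bidegree, so both sides reduce to explicit free-$\cO$-algebra constructions on pure tensors.

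The associated graded computation then splits by case. For cases (1) and (3), the map becomes the classical shuffle/Eilenberg--Zilber comparison between $\Omega(C_1\otimes C_2)$ and $\Omega(C_1)\otimes\Omega(C_2)$, which is a quasi-isomorphism over a field. For case (2), the coproduct on $C_1\otimes C_2$ combined with the Koszul twisting $\coComm\to\Lie\{-1\}$ forces the generator $c_1\otimes c_2$ to map to a bracket between the two Lie factors of $\Omega(C_1)\oplus\Omega(C_2)$, which vanishes, and the associated graded identifies with the fact that Harrison homology of a tensor product of commutative algebras is the direct sum of Harrison homologies. For the curved cases (4) and (5), the curving $\theta$ contributes a piece of the differential that pairs the summands $\Omega(C_1)\oplus \Omega(C_2)$ acyclically against portions of $\Omega(C_1)\otimes\Omega(C_2)$, leaving a contracting homotopy whose cohomology is exactly the shifted tensor product.

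The main obstacle I anticipate is the bookkeeping in the curved cases: one must carefully track how the curving $\theta$ on $\coAss^\theta$ and $\coP_n^\theta$ mixes the three summands $C_1$, $C_2$, and $C_1\otimes C_2$ of the Hopf tensor product in the cobar differential, and verify that the extra quadratic-constant terms precisely implement the cancellation of the direct-summand pieces into the tensor product piece. A secondary difficulty is keeping the signs and degree shifts straight in the Poisson setting, where the bracket carries degree $1-n$ and the operadic shift $\{-n\}$ interacts non-trivially with the Hopf comultiplication on generators.
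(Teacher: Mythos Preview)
Your proposal has genuine gaps, both in the construction of the comparison map and in the filtration argument.

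For the map: in the paper the morphism $p$ on generators sends $c_i\in C_i$ to the length-one word $[c_i]\in\Omega(C_i)$ and sends $C_1\otimes C_2$ to \emph{zero}. Your description of the image of $c_1\otimes c_2$ as ``the element obtained by applying the Koszul twisting morphism $\cC\to\cO$ to the Hopf-structural term'' does not parse: the twisting morphism is a map of symmetric sequences, not something you can evaluate on a single element of $C_1\otimes C_2$ to produce an element of the target algebra. Once you try to make this precise you will find that $p|_{C_1\otimes C_2}=0$ is essentially forced if $p$ is to be a chain map and an algebra map, and then the real content of the proof is producing a homotopy between $\id$ and a section $i$ of $p$.

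For the filtration: the filtration ``by total number of $C_1$-tensor factors'' does not behave the way you claim. Using the explicit formula for $\Delta(x,y)$ on $C_1\otimes C_2$ given in the paper, the cobar differential both preserves and strictly increases this count, so on the associated graded you do \emph{not} recover a clean Eilenberg--Zilber comparison; you only kill some of the interaction terms. The paper instead uses the \emph{coradical} filtration on $C_1\oplus C_2\oplus C_1\otimes C_2$, which is complete and bounded below, and which on the associated graded kills \emph{all} of the cooperad structure, reducing to the case where $C_1$ and $C_2$ have trivial coproduct. In that degenerate case the paper writes down an explicit ``bubble-sort'' homotopy $h$ by moving $C_2$-letters past $C_1$-letters and recording the intermediate $(x,y)\in C_1\otimes C_2$ letters; this is the actual work of the argument, and nothing like it appears in your sketch. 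Note also that your appeal to a ``classical shuffle/Eilenberg--Zilber comparison between $\Omega(C_1\otimes C_2)$ and $\Omega(C_1)\otimes\Omega(C_2)$'' is close to circular: that comparison is precisely what this proposition establishes.

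Once case~(1) is in hand, the paper deduces case~(2) by observing that $p$ is compatible with the shuffle coproducts and passing to primitives, case~(3) by filtering away the cobracket and applying $\Sym$ to case~(2), and cases~(4)--(5) by passing to the associated graded of the curving filtration to reduce to the uncurved cases. Your description of the curved cases (``pairs the summands acyclically\dots leaving a contracting homotopy'') is not what happens: the curving is simply filtered away, not used to produce a new homotopy.
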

\begin{proof}
For simplicity in cases (1) and (3) we add units and augmentations using Lemma \ref{lm:addunit}.

In the case $\cC=\coAss$ a morphism of associative algebras
\[p\colon A=\T^\bullet(C_1[-1]\oplus C_2[-1]\oplus C_1\otimes C_2[-1])\longrightarrow \T^\bullet(C_1[-1])\otimes \T^\bullet(C_2[-1])\]
is uniquely specified on the generators and we define it to be zero on $C_1\otimes C_2[-1]$ and the obvious inclusions on the first two summands. It is clear that $p$ is compatible with the cobar differentials.

Elements of $A$ are given by words
\[[x_1|...|y_k|...|(x_i,y_i)|...],\]
where $x_n\in C_1$, $y_n\in C_2$ and $(x_n,y_n)\in C_1\otimes C_2$. The coproduct on $C_1\otimes C_2$ is given by
\begin{align*}
\Delta(x, y) &= x\otimes y + (-1)^{|x||y|} y\otimes x + (-1)^{|x_{(2)}||y_{(1)}|}(x_{(1)}, y_{(1)})\otimes (x_{(2)}, y_{(2)}) + (x, y_{(1)})\otimes y_{(2)} \\
&+ (-1)^{|x||y_{(1)}|} y_{(1)}\otimes (x, y_{(2)}) + x_{(1)}\otimes (x_{(2)}, y) + (-1)^{|y||x_{(2)}|}(x_{(1)}, y)\otimes x_{(2)}.
\end{align*}

We define a splitting
\[i\colon \T^\bullet(C_1[-1])\otimes \T^\bullet(C_2[-1])\longrightarrow A\]
to be the multiplication, e.g.
\[i([x_1|x_2]\otimes [y_1|y_2|y_3]) = [x_1|x_2|y_1|y_2|y_3].\]
It is again clear that $i$ is compatible with the cobar differentials and that $p\circ i=\id$. Note, however, that $i$ is merely a morphism of chain complexes and is not compatible with the multiplication.

Let $\{F_n C_i\}_{n\geq 1}$ be the coradical filtrations on $C_i$ which satisfy
\[\Delta(F_n C_i)\subset \bigoplus_{a+b=n} F_a C_i\otimes F_b C_i.\]
We introduce a filtration on the coalgebra $C_1\oplus C_2\oplus C_1\otimes C_2$ by declaring that \[F_n C_1\otimes F_m C_2\subset F_{n+m-1}(C_1\otimes C_2).\] It induces a filtration on the algebra $A$ by declaring that $[x_1|...|x_n]$ with $x_i\in F_{n_i}$ lies in $F_{\sum n_i-n} A$ and similarly for $\T^\bullet(C_1[-1])\otimes \T^\bullet(C_2[-1])$. The filtrations are bounded below (by zero) and complete. Moreover, the morphism $p$ is clearly compatible with the filtrations, so it is enough to prove that $p$ is a quasi-isomorphism after passing to the associated graded algebras with respect to the filtration. This allows us to assume that the coproducts on $C_i$ are zero.

We are now going to construct a homotopy $\id\stackrel{h}\sim i\circ p$ such that
\begin{equation}
a - i(p(a)) = \d h(a) + h(\d a).
\label{eq:associativehomotopy}
\end{equation}

The homotopy $h$ on monomials $a\in A$ is constructed by the following algorithm akin to bubble sort:
\begin{itemize}
\item Suppose $a$ has no factors in $C_1\otimes C_2$. If $a$ has no elements in the wrong order (i.e. elements of $C_2$ followed by an element of $C_1$), then $h(a) = 0$. Otherwise, write $a = b\cdot [y|x]\cdot c$, where $c$ has no elements in the wrong order and define inductively
\begin{equation}
h(a) = (-1)^{|x||y|+|y|+1 + |b|}b\cdot [(x, y)]\cdot c + (-1)^{(|x|+1)(|y|+1)}h(b\cdot [x|y]\cdot c).
\label{eq:homotopystep1}
\end{equation}

\item If $a$ has factors in $C_1\otimes C_2$, we define it recursively by the formula
\begin{equation}
h(a) = (-1)^{|b| + |x| + |y| + 1}b\cdot [(x, y)]\cdot h(c),
\label{eq:homotopystep2}
\end{equation}
where $a = b\cdot [(x, y)]\cdot c$ such that $b$ has no factors in $C_1\otimes C_2$.
\end{itemize}

Clearly, the second step reduces the number of elements in $C_1\otimes C_2$, so in finite time we arrive at an expression without factors in $C_1\otimes C_2$. Given a monomial $a$ without factors in $C_1\otimes C_2$ we define the number of \defterm{inversions} to be the number of elements of $C_2$ left of an element of $C_1$. For instance, the expression $[y_1|x_1|x_2|y_2]$ with $x_i\in C_1$ and $y_i\in C_2$ has two inversions. It is immediate that the first step of the algorithm reduces the number of inversions by 1 and hence it also terminates in finite time.

Let us make a preliminary observation that the equation
\begin{align}
b\cdot [(x, y)] ip(c) &= (-1)^{|y|}b\cdot [x|y]\cdot h(c) + (-1)^{|x|(|y|+1)}b\cdot [y|x]\cdot h(c) \nonumber \\
&+ (-1)^{|b|+|x|+1} h(b\cdot [x|y]\cdot c) + (-1)^{|b|+|x||y|+|y|+1}h(b\cdot [y|x]\cdot c) \label{eq:homotopylemma}
\end{align}
holds if $b$ does not contain elements of $C_1\otimes C_2$. Indeed, if $c$ contains elements of $C_1\otimes C_2$, both sides are zero by equation \eqref{eq:homotopystep2}. Otherwise, it is enough to assume that $c$ is ordered. In that case
\[(-1)^{|b|+|x||y|+|y|+1}h(b\cdot [y|x]\cdot c) = b\cdot [(x, y)]\cdot c + (-1)^{|x|+|b|} h(b\cdot [x|y]\cdot c)\]
by equation \eqref{eq:homotopystep1} and hence equation \eqref{eq:homotopylemma} holds in this case. Let us now show that thus constructed homotopy $h$ satisfies equation \eqref{eq:associativehomotopy}.

An element $a$ with $0$ inversions is completely ordered and by definition $h$ annihilates it. $\d a$ also has $0$ inversions, so $h(\d a)=0$, but we also have $a = i(p(a))$. Next, suppose $a$ has no factors in $C_1\otimes C_2$. Suppose we have checked the formula \eqref{eq:associativehomotopy} for all monomials with at most $k$ inversions and consider a monomial $a=b\cdot [y|x]\cdot c$ with $k+1$ inversions. We have
\begin{align*}
\d h(a) &= (-1)^{|x||y|+|y|+1+|b|} \d b\cdot [(x, y)]\cdot c + (-1)^{|x||y|+|y|+1} b\cdot [(\d x, y)]\cdot c \\
&+ (-1)^{(|x|+1)(|y|+1)} b\cdot [(x, \d y)] \cdot c + (-1)^{|x||y|+|x|+|y|} b\cdot [x|y]\cdot c \\
&+ b\cdot [y|x]\cdot c + (-1)^{|x|(|y|+1)} b\cdot [(x, y)]\cdot \d c + (-1)^{(|x|+1)(|y|+1)} \d h(b\cdot [x|y]\cdot c),
\end{align*}
\begin{align*}
\d a &= \d b\cdot [y|x]\cdot c + (-1)^{|b|}b\cdot [\d y|x]\cdot c+(-1)^{|b|+|y|+1}b\cdot [y|\d x]\cdot c \\
&+ (-1)^{|b|+|x|+|y|} b\cdot [y|x]\cdot \d c
\end{align*}
and
\begin{align*}
h(\d a) &= (-1)^{|x||y|+|y|+|b|}\d b\cdot [(x, y)]\cdot c + (-1)^{(|x|+1)(|y|+1)} h(\d b\cdot [x|y]\cdot c) \\
&+(-1)^{|x||y|+|x|+|y|}b\cdot [(x,\d y)]\cdot c + (-1)^{(|x|+1)|y|+|b|}h(b\cdot [x|\d y]\cdot c) \\
&+(-1)^{(|x|+1)|y|}b\cdot [(\d x, y)]\cdot c + (-1)^{(|x|+1)(|y|+1)+|b|} h(b\cdot [\d x|y]\cdot c) \\
&+(-1)^{|x|(|y|+1)+1} b\cdot [(x, y)]\cdot \d c + (-1)^{|x||y|+|b|}h(b\cdot [x|y]\cdot \d c)
\end{align*}

We also have \[ip(b\cdot [x|y]c) = (-1)^{(|x|+1)(|y|+1)} ip(a)\]
and by inductive assumption
\[h(\d(b\cdot [x|y]\cdot c)) + \d h(b\cdot [x|y]\cdot c) = b\cdot [x|y]\cdot c - (-1)^{(|x|+1)(|y|+1)} ip(a).\]

Combining these equations we get
\begin{align*}
\d h(a) + h(\d a) &= (-1)^{|x||y|+|x|+|y|}b\cdot [x|y]\cdot c + a + (-1)^{(|x|+1)(|y|+1)}b\cdot [x|y]\cdot c - ip(a) \\
&= a - ip(a).
\end{align*}

So far we have proved equation \eqref{eq:associativehomotopy} for monomials with no factors in $C_1\otimes C_2$. Now suppose we know the formula holds for monomials with at most $k$ factors in $C_1\otimes C_2$ and consider a monomial $a$ with $k+1$ factors in $C_1\otimes C_2$. Then
\begin{align*}
\d h(a) &= (-1)^{|b|+|x|+|y|+1} \d b\cdot [(x, y)]\cdot h(c) + (-1)^{|x|+|y|+1}b\cdot [(\d x, y)]\cdot h(c) \\
&+ (-1)^{|y|+1}b\cdot [(x, \d y)]\cdot h(c) + b\cdot [(x, y)]\cdot \d h(c) + (-1)^{|y|} b\cdot [x|y]\cdot h(c) \\
&+ (-1)^{|x|(|y|+1)}b\cdot [y|x]\cdot h(c),
\end{align*}
\begin{align*}
\d a &= \d b\cdot [(x, y)]\cdot c + (-1)^{|b|} b\cdot [(\d x, y)]\cdot c + (-1)^{|b|+|x|} b\cdot [(x, \d y)]\cdot c \\
&+ (-1)^{|b|+|x|+1}b\cdot [x|y]\cdot c + (-1)^{|b|+|x||y|+|y|+1}b\cdot [y|x]\cdot c \\
&+ (-1)^{|b|+|x|+|y|+1}b\cdot [(x,  y)]\cdot \d c
\end{align*}
and
\begin{align*}
h(\d a) &= (-1)^{|b|+|x|+|y|}\d b\cdot [(x, y)]\cdot h(c) + (-1)^{|x|+|y|}b\cdot [(\d x, y)]\cdot h(c) + (-1)^{|y|}b\cdot [(x, \d y)]\cdot h(c) \\
&+ (-1)^{|b|+|x|+1}h(b\cdot [x|y]\cdot c) + (-1)^{|b|+|x||y|+|y|+1}h(b\cdot [y|x]\cdot c) + b\cdot [(x, y)]\cdot h(\d c).
\end{align*}

Therefore,
\[h(\d a) + \d h(a) = a\]
by using equation \eqref{eq:homotopylemma}. This finishes the proof for $\cC=\coAss$.

Observe now that the morphism $p$ is compatible with the shuffle coproducts on both sides by looking at the generators. If $C_1$ and $C_2$ are both cocommutative, then the shuffle coproduct is compatible with the differentials. Therefore, passing to the primitives we obtain the statement for $\cC=\coComm$.

In the case $\cC=\coP_n$ we can assume that the cobar differential involving the cobrackets are absent exactly as in the case of $\cC=\coAss$. But then the statement is obtained by applying the symmetric algebra to the statement for $\cC=\coComm$.

Finally, the curved cases $\cC=\coAss^\theta$ and $\cC=\coP_n^\theta$ are reduced to the uncurved cases $\cC=\coAss$ and $\cC=\coP_n$ after passing to the associated gradeds.
\end{proof}

The statement has the following important corollaries.

\begin{cor}
The natural Hopf counital structures on the following cooperads are admissible:
\begin{itemize}
\item $\cC=\coAss$,

\item $\cC=\coAss^\theta$,

\item $\cC=\coComm$,

\item $\cC=\coP_n$,

\item $\cC=\coP_n^\theta$.
\end{itemize}
\label{cor:Hopfadmissible}
\end{cor}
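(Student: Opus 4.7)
The plan is to deduce the five cases of the corollary directly from Proposition \ref{prop:barlaxmonoidal}, together with the defining property of the class $\kos$: a morphism of (curved) $\cC$-coalgebras is a weak equivalence precisely when it is sent to a quasi-isomorphism by $\Omega$. Fix one of the listed cooperads $\cC$ equipped with its Hopf counital structure, and fix a coaugmented coalgebra $C \in \coalg_{\cC^{\cu}}^{\coaug}$. By Lemma \ref{lm:addunit} I may pass to the non-counital side, where the tensor product $C \otimes C'$ of coaugmented coalgebras corresponds to a $\cC$-coalgebra whose underlying graded vector space is $\overline{C} \oplus \overline{C'} \oplus \overline{C} \otimes \overline{C'}$.

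Given a weak equivalence $f\colon C_1 \to C_2$ in $\coalg_{\cC^{\cu}}^{\coaug}$, I must show that $\id_C \otimes f$ is again a weak equivalence, i.e.\ that the induced morphism
\[
\Omega\bigl(\overline{C}\oplus \overline{C_1}\oplus \overline{C}\otimes \overline{C_1}\bigr)\to \Omega\bigl(\overline{C}\oplus \overline{C_2}\oplus \overline{C}\otimes \overline{C_2}\bigr)
\]
is a quasi-isomorphism of $\cO$-algebras for some (equivalently, by Proposition \ref{prop:weindependent}, any) Koszul resolution $\Omega\cC \to \cO$. The comparison morphisms constructed in Proposition \ref{prop:barlaxmonoidal} are manifestly natural in both inputs and fit the displayed morphism into a commutative square whose other sides are quasi-isomorphisms and whose right-hand vertical has the form
\[
\Omega(\overline{C}) \otimes \Omega(\overline{C_1}) \to \Omega(\overline{C}) \otimes \Omega(\overline{C_2}),
\]
possibly with extra direct summands $\Omega(\overline{C}) \oplus \Omega(\overline{C_i})$ and a degree shift, depending on which of the five cases we are in. Since $\Omega(\overline{C_1}) \to \Omega(\overline{C_2})$ is a quasi-isomorphism by hypothesis, and since taking direct sums with a fixed complex and tensoring with a fixed complex of $k$-vector spaces both preserve quasi-isomorphisms, the right-hand vertical is a quasi-isomorphism. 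The two-out-of-three property then forces the left-hand vertical to be one as well, proving admissibility.

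The corollary is thus entirely formal once Proposition \ref{prop:barlaxmonoidal} is in hand; the only bookkeeping is to match each of the five cooperads $\coAss$, $\coAss^\theta$, $\coComm$, $\coP_n$, $\coP_n^\theta$ with the corresponding case (1)--(5) of the proposition, which is immediate. The genuine work, i.e.\ the construction of the explicit bar-sort homotopy showing that the map $p$ is a quasi-isomorphism, has already been carried out in Proposition \ref{prop:barlaxmonoidal}, so no further analysis is needed here.
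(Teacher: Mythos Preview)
Your proof is correct and follows essentially the same approach as the paper: you reduce to Proposition~\ref{prop:barlaxmonoidal}, use its natural quasi-isomorphisms to build a commutative square, and conclude by two-out-of-three together with the fact that tensoring or taking direct sums with a fixed complex preserves quasi-isomorphisms.
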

\begin{proof}
Suppose $C,D,E$ are three (curved) conilpotent $\cC$-coalgebras with a morphism $D\rightarrow E$ such that $\Omega D\rightarrow \Omega E$ is a quasi-isomorphism. We have to show that
\[\Omega(C\otimes D)\to \Omega(C\otimes E)\]
is a quasi-isomorphism as well.

If $\cC=\coComm$, the statement follows from the commutative diagram
\[
\xymatrix{
\Omega(C\otimes D) \ar^{\sim}[d] \ar[r] & \Omega(C\otimes E) \ar^{\sim}[d] \\
\Omega(C)\oplus \Omega(D) \ar^{\sim}[r] & \Omega(C)\oplus \Omega(E)
}
\]
with vertical weak equivalences provided by Proposition \ref{prop:barlaxmonoidal}.

If $\cC=\coAss$ or $\cC=\coP_n$, the statement follows from the commutative diagram
\[
\xymatrix{
\Omega(C\otimes D) \ar^{\sim}[d] \ar[r] & \Omega(C\otimes E) \ar^{\sim}[d] \\
\Omega(C)\oplus \Omega(D)\oplus \Omega(C)\otimes \Omega(D)[-1] \ar^{\sim}[r] & \Omega(C)\oplus \Omega(E)\oplus \Omega(C)\otimes \Omega(E)[-1]
}
\]
with vertical weak equivalence given by the same proposition.

Finally, if $\cC=\coAss^\theta$ or $\cC=\coP_n^\theta$, the statement follows from the commutative diagram
\[
\xymatrix{
\Omega(C\otimes D) \ar^{\sim}[d] \ar[r] & \Omega(C\otimes E) \ar^{\sim}[d] \\
\Omega(C)\oplus \Omega(D) \ar^{\sim}[r] & \Omega(C)\oplus \Omega(E)
}
\]

In all these cases we are using the fact that the tensor product of complexes preserves quasi-isomorphisms.
\end{proof}

Consider the symmetric monoidal structure on the category of non-unital algebras $\alg_{\Ass}$ where the tensor product of $A_1$ and $A_2$ is
\[A_1\otimes A_2\oplus A_1\oplus A_2.\]
One also has a symmetric monoidal structure on the category of non-counital coalgebras $\coalg_{\coAss}$ where the tensor product of $C_1$ and $C_2$ is
\[C_1\otimes C_2\oplus C_1\oplus C_2.\]
Similarly, one introduces the symmetric monoidal structures on the categories $\alg_{\P_n}$ and $\coalg_{\coP_n}$.
Finally, consider the Cartesian symmetric monoidal structure on the category of Lie algebras $\alg_{\Lie}$.

\begin{cor}
The adjoint equivalences
\[\adj{\coalg_{\coAss}[\kos^{-1}]}{\alg_{\Ass}[\qis^{-1}]},\]
\[\adj{\coalg_{\coComm}[\kos^{-1}]}{\alg_{\Lie}[\qis^{-1}]}\]
and
\[\adj{\coalg_{\coP_n}[\kos^{-1}]}{\alg_{\P_n}[\qis^{-1}]}\]
are symmetric monoidal.
\end{cor}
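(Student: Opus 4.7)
The plan is to view these equivalences as localizations of oplax symmetric monoidal functors whose structure maps happen to be weak equivalences. Proposition \ref{prop:koszulduality} already supplies the underlying adjoint equivalences of $\infty$-categories, so the task is purely one of upgrading these to symmetric monoidal equivalences.

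First I would check that all six categories are relative symmetric monoidal categories, so that Proposition \ref{prop:smlocalization} can be applied to endow their localizations with symmetric monoidal $\infty$-category structures. On the algebra side, each of the stated tensor products is built from direct sums and tensor products of the underlying chain complexes, and these constructions manifestly preserve quasi-isomorphisms. On the coalgebra side, this is exactly the content of admissibility of the Hopf counital structures on $\coAss$, $\coComm$, and $\coP_n$, which was verified in Corollary \ref{cor:Hopfadmissible}. Thus $\coalg_{\coAss}[\kos^{-1}]$, $\coalg_{\coComm}[\kos^{-1}]$, $\coalg_{\coP_n}[\kos^{-1}]$, $\ialg_{\Ass}$, $\ialg_{\Lie}$, and $\ialg_{\P_n}$ all acquire natural symmetric monoidal $\infty$-category structures.

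Next I would exhibit the cobar functor $\Omega$ as an oplax symmetric monoidal functor. The key ingredient is Proposition \ref{prop:barlaxmonoidal}, whose morphisms supply, for each pair $C_1,C_2$, an explicit natural map of $\cO$-algebras
\[\Omega(C_1\otimes_{\coalg} C_2)\to \Omega(C_1)\otimes_{\alg}\Omega(C_2),\]
once one identifies the target summands with the tensor product in $\alg_{\Ass}$, $\alg_{\Lie}$, or $\alg_{\P_n}$ respectively (in the Lie case the $\Omega(C_1)\otimes\Omega(C_2)$ summand is absent, matching the cartesian product of Lie algebras; in the associative and Poisson cases the degree shift reflects the passage between $\cO\{-n\}$ and $\cO$, which induces an equivalence of algebra categories). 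Because these maps are given by the obvious inclusions on cogenerators, their compatibility with the associator, unit, and symmetry coherences reduces to an essentially combinatorial verification on words, entirely parallel to the compatibility with the cobar differential already worked out in the proof of Proposition \ref{prop:barlaxmonoidal}. Dually, one may just as well transfer this structure through the adjunction to obtain a lax symmetric monoidal structure on $\B$.

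Finally, Proposition \ref{prop:barlaxmonoidal} asserts that each oplax structure map is a quasi-isomorphism, hence becomes an equivalence in the localization $\ialg_{\cO}$. Therefore the induced oplax symmetric monoidal functor of $\infty$-categories provided by (the dual of) Proposition \ref{prop:smlocalization} is in fact strong symmetric monoidal. Combined with the fact that $\Omega$ is already an equivalence by Proposition \ref{prop:koszulduality}, its right adjoint $\B$ inherits a canonical symmetric monoidal structure making the adjoint equivalence symmetric monoidal. The main obstacle is the coherence verification of the second step: checking associativity, unit, and symmetry axioms on the explicit maps from Proposition \ref{prop:barlaxmonoidal}. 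This is entirely analogous to, but somewhat more intricate than, the differential compatibility already established in \loccit, and in particular requires tracking the combinatorics of the diagonal on $C_1\otimes C_2$ recorded there; the shift conventions in the associative and Poisson cases also require care to match the tensor product on $\alg_{\Ass}$ and $\alg_{\P_n}$ after the shift identification.
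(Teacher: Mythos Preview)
Your proposal is correct and follows essentially the same route as the paper's proof: invoke Corollary \ref{cor:Hopfadmissible} to get relative symmetric monoidal structures on the coalgebra categories, use the maps of Proposition \ref{prop:barlaxmonoidal} as (op)lax structure morphisms for $\Omega$ (equivalently, lax for $\B$), pass through Proposition \ref{prop:smlocalization}, and then conclude strong monoidality from the fact that those maps are quasi-isomorphisms. The only difference is one of presentation: the paper compresses all of this into three sentences and does not mention the coherence verification at all, whereas you flag it explicitly as the residual work. That honesty is not a gap---the associativity, unit, and symmetry coherences for the maps of Proposition \ref{prop:barlaxmonoidal} are indeed routine once one unwinds them on cogenerators, and the paper simply takes this for granted.
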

\begin{proof}
By Corollary \ref{cor:Hopfadmissible} the cooperads $\coAss$, $\coComm$ and $\coP_n$ have Hopf admissible structures. Therefore, by Proposition \ref{prop:smlocalization} we obtain a lax symmetric monoidal equivalence
\[\adj{\coalg_{\coAss}[\kos^{-1}]}{\alg_{\Ass}[\qis^{-1}]}\]
and similarly for the other cooperads. But by Proposition \ref{prop:barlaxmonoidal} these are in fact symmetric monoidal functors.
\end{proof}

Therefore, we can use the relative symmetric monoidal category $(\coalg_{\coP_n}, \kos)$ as a model for the symmetric monoidal $\infty$-category $\ialg_{\P_n}$.

\section{Brace bar duality}

\subsection{Brace construction}

Suppose $\cC$ is a (curved) cooperad equipped with a Hopf counital structure (see Definition \ref{def:hopfcounital}). Let us briefly recall from \cite{CW} the definition of the associated operad of braces $\Br_\cC$. Its operations are parametrized by rooted trees with ``external'' vertices that are colored white in our pictures and ``internal'' vertices that are colored black. An external vertex with $r$ children is labeled by elements of $\cC^{\cu}(r)$ while an internal vertex with $r$ children is labeled by elements of $\overline{\cC}(r)[-1]$.

The coproduct on $\cC^{\cu}$ defines a morphism
\[\cC^{\cu}(r)\to \cC^{\cu}(r+r')\otimes \cC^{\cu}(1)^{\otimes r}\otimes \cC^{\cu}(0)^{\otimes r'}\to \cC^{\cu}(r+r'),\]
where in the second morphism we use $\cC^{\cu}(1)\rightarrow k$ given by the counit on the cooperad $\cC^{\cu}$ and $\cC^{\cu}(0)\cong \cC(0)\oplus k\rightarrow k$ given by projection on the second summand. The operadic composition is given by grafting rooted trees with labels obtained by applying the coproduct on $\cC^{cu}$ and combining different labels using the Hopf structure. The differential on internal vertices coincides with the cobar differential (in particular, if $\cC$ is curved, it contains an extra curving term); the differential of an external vertex splits it into an internal and an external vertex. We refer to \cite[Formula 8.14]{DW} for an explicit description of the differential.

\begin{figure}
\begin{tikzpicture}
\node[b] (v0) at (0, 0) {};
\node (v) at (0, -1) {};
\node[w] (v1) at (-1, 1) {$1$};
\node at (0, 1) {...};
\node[w] (v2) at (1, 1) {$k$};
\draw (v0) edge (v);
\draw (v0) edge (v1);
\draw (v0) edge (v2);
\end{tikzpicture}
\qquad
\begin{tikzpicture}
\node[w] (v0) at (0, 0) {$1$};
\node (v) at (0, -1) {};
\node[w] (v1) at (-1, 1) {$2$};
\node at (0, 1) {...};
\node[w] (v2) at (1, 1) {$k$};
\draw (v0) edge (v);
\draw (v0) edge (v1);
\draw (v0) edge (v2);
\end{tikzpicture}
\caption{Generating operations of $\Br_\cC$.}
\label{fig:brgenerating}
\end{figure}

The operad $\Br_\cC$ is generated by trees shown in Figure \ref{fig:brgenerating}, where the leaves are labeled by the unit element of $\cC^{\cu}(0)$. For a $\Br_\cC$-algebra $A$ we denote the operation given by the corolla with an internal vertex by $m(c|x_1,...,x_r)$ where $c\in\overline{\cC}(r)[-1]$ and $x_i\in A$. The operation given by the corolla with an external vertex is denoted by $x\{c|y_1, ..., y_r\}$, where $c\in \cC^{\cu}(r)$. We denote by $\Omega\cC\rightarrow \Br_\cC$ the natural morphism which sends a generator in $\overline{\cC}[-1]$ to the corresponding corolla with an internal vertex.

The generating operations satisfy the following three relations.
\begin{enumerate}
\item (Associativity).

\begin{equation}
\begin{tikzpicture}
\node[w] (v0l) at (-1.7, 0) {$0$};
\node (vl) at (-1.7, -1) {};
\node[w] (v1l) at (-2.7, 1) {$z$};
\node at (-1.7, 1) {...};
\node[w] (v2l) at (-0.7, 1) {$z$};
\draw (v0l) edge (vl);
\draw (v0l) edge (v1l);
\draw (v0l) edge (v2l);

\node at (0, 0) {$\circ_0$};

\node[w] (v0r) at (1.7, 0) {$x$};
\node (vr) at (1.7, -1) {};
\node[w] (v1r) at (0.7, 1) {$y$};
\node at (1.7, 1) {...};
\node[w] (v2r) at (2.7, 1) {$y$};
\draw (v0r) edge (vr);
\draw (v0r) edge (v1r);
\draw (v0r) edge (v2r);

\node at (3.5, 0) {$=\sum\pm$};

\node[w] (v0rr) at (5, 0) {$x$};
\node (vrr) at (5, -1) {};
\node[w] (v1rr) at (4.1, 1) {$y$};
\node[w] (v2rr) at (4.7, 1) {$z$};
\node (v3rr) at (5.3, 1) {$...$};
\node[w] (v4rr) at (5.9, 1) {$y$};
\node[w] (v11rr) at (3.6, 2) {$z$};
\node (v12rr) at (4.1, 2) {$...$};
\node[w] (v13rr) at (4.6, 2) {$z$};
\node[w] (v41rr) at (5.9, 2) {$z$};
\draw (v0rr) edge (vrr);
\draw (v0rr) edge (v1rr);
\draw (v1rr) edge (v11rr);
\draw (v1rr) edge (v13rr);
\draw (v0rr) edge (v2rr);
\draw (v0rr) edge (v4rr);
\draw (v4rr) edge (v41rr);
\end{tikzpicture}
\label{eq:brace1}
\end{equation}

\item (Higher homotopies).

\begin{equation}
\begin{tikzpicture}
\node at (-3, 0) {$\d$};

\node[w] (v0l) at (-1.7, 0) {$0$};
\node (vl) at (-1.7, -1) {};
\node[w] (v1l) at (-2.7, 1) {$1$};
\node at (-1.7, 1) {...};
\node[w] (v2l) at (-0.7, 1) {$n$};
\draw (v0l) edge (vl);
\draw (v0l) edge (v1l);
\draw (v0l) edge (v2l);

\node at (0, 0) {$=\sum_{i=0}^n$};

\node[w] (v0r) at (1.7, 0) {$0$};
\node (vr) at (1.7, -1) {};
\node[w] (v1r) at (0.7, 1) {$1$};
\node at (1.7, 1) {...};
\node[w] (v2r) at (2.7, 1) {$n$};
\draw (v0r) edge (vr);
\draw (v0r) edge (v1r);
\draw (v0r) edge (v2r);

\node at (3, 0) {$\circ_i($};

\node[w] (v1a) at (3.8, 0) {$1$};
\node (va) at (3.8, -1) {};
\node[b] (v2a) at (3.8, 1) {};
\draw (v1a) edge (va);
\draw (v1a) edge (v2a);

\node at (4.5, 0) {$+$};

\node[b] (v1b) at (5.2, 0) {};
\node (vb) at (5.2, -1) {};
\node[w] (v2b) at (5.2, 1) {$1$};
\draw (v1b) edge (vb);
\draw (v1b) edge (v2b);

\node at (5.7, 0) {$)$};
\end{tikzpicture}
\label{eq:brace2}
\end{equation}

\item (Distributivity).

\begin{equation}
\begin{tikzpicture}
\node[w] (v0l) at (-1.7, 0) {$0$};
\node (vl) at (-1.7, -1) {};
\node[w] (v1l) at (-2.7, 1) {$y$};
\node at (-1.7, 1) {...};
\node[w] (v2l) at (-0.7, 1) {$y$};
\draw (v0l) edge (vl);
\draw (v0l) edge (v1l);
\draw (v0l) edge (v2l);

\node at (0, 0) {$\circ_0$};

\node[b] (v0r) at (1.7, 0) {};
\node (vr) at (1.7, -1) {};
\node[w] (v1r) at (0.7, 1) {$x$};
\node at (1.7, 1) {...};
\node[w] (v2r) at (2.7, 1) {$x$};
\draw (v0r) edge (vr);
\draw (v0r) edge (v1r);
\draw (v0r) edge (v2r);

\node at (3.5, 0) {$=\sum\pm$};

\node[b] (v0rr) at (5, 0) {};
\node (vrr) at (5, -1) {};
\node[w] (v1rr) at (4.1, 1) {$x$};
\node[w] (v2rr) at (4.7, 1) {$y$};
\node (v3rr) at (5.3, 1) {$...$};
\node[w] (v4rr) at (5.9, 1) {$x$};
\node[w] (v11rr) at (3.6, 2) {$y$};
\node (v12rr) at (4.1, 2) {$...$};
\node[w] (v13rr) at (4.6, 2) {$y$};
\node[w] (v41rr) at (5.9, 2) {$y$};
\draw (v0rr) edge (vrr);
\draw (v0rr) edge (v1rr);
\draw (v1rr) edge (v11rr);
\draw (v1rr) edge (v13rr);
\draw (v0rr) edge (v2rr);
\draw (v0rr) edge (v4rr);
\draw (v4rr) edge (v41rr);
\end{tikzpicture}
\label{eq:brace3}
\end{equation}
\end{enumerate}

Let us give some examples of the brace construction that we will use. Note that the second corolla in Figure \ref{fig:brgenerating} with $k=2$ gives rise to a pre-Lie structure on any $\Br_{\cC}$-algebra. However, in general the pre-Lie operation is not compatible with the differential.

The simplest example is the case $\cC=\bu$, the trivial cooperad. In this case $\overline{\cC}=0$ and hence the only operations are given by braces. However, since $\cC(n)=0$ for $n\geq 2$, we can only have vertices of valence 1 and 0, i.e. the operations of $\Br_{\bu}$ are parametrized by linear chains and hence $\Br_{\bu}(n)\cong k[S_n]$. It is immediate to see that the pre-Lie operation in this case gives rise to an associative multiplication.

\begin{prop}
One has an isomorphism of operads
\[\Ass\cong \Br_{\bu}.\]
\end{prop}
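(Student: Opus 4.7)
Here is my plan.

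First I would identify $\Br_{\bu}(n)$ as a symmetric sequence. Since $\bu$ is trivial, one has $\overline{\bu}=0$, so no black (internal) vertices are allowed in the trees parametrizing operations; and $\bu^{\cu}(r)=0$ for all $r\geq 2$ (with $\bu^{\cu}(0)=\bu^{\cu}(1)=k$), so each white (external) vertex can have at most one child. Consequently every tree contributing to $\Br_{\bu}(n)$ is a linear chain consisting of $n$ white vertices topped by a leaf, and the operation is determined by the bijection between its white vertices and the inputs $\{1,\ldots,n\}$. This yields an isomorphism of $S_n$-modules $\Br_{\bu}(n)\cong k[S_n]$.

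Next I would verify that the differential on $\Br_{\bu}$ is zero. The cobar differential on internal vertices is vacuous because there are no internal vertices, and the part of the differential that splits an external vertex produces a tree containing an internal vertex, which must vanish. Hence $\Br_{\bu}$ is concentrated in degree $0$ with underlying sequence $k[S_n]$, matching $\Ass$ at the level of symmetric sequences.

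Then I would check that the operad structure agrees with that of $\Ass$. The key observation is that the binary operation $x\{1\,|\,y\}$, coming from the $k=1$ case of the second generating corolla in Figure \ref{fig:brgenerating}, is strictly associative: in the associativity relation \eqref{eq:brace1} applied to this operation, every term on the right-hand side involving a higher $\cC^{\cu}(r)$ with $r\geq 2$ drops out, leaving only $(x\{1|y\})\{1|z\}=x\{1|y\{1|z\}\}$. Sending the generator of $\Ass$ to $x\{1|y\}$ therefore defines a morphism of operads $\Ass\to \Br_{\bu}$. Under this map, the linear chain of $n$ white vertices labeled by a permutation $\sigma$ is precisely the image of $\sigma\in\Ass(n)=k[S_n]$, so the morphism is an isomorphism in each arity.

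The only substantive point is the collapse of the associativity relation \eqref{eq:brace1} to strict associativity; everything else is a direct tree count. No subtleties with curvature or higher homotopies arise, since the relevant terms are all parametrized by elements of $\overline{\bu}$ or of $\bu^{\cu}(r\geq 2)$, all of which are zero.
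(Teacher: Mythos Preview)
Your approach is correct and essentially identical to the paper's. The paper gives the argument in the paragraph immediately preceding the proposition rather than in a formal proof environment: it observes that $\overline{\bu}=0$ forces all vertices to be external, that $\bu(n)=0$ for $n\geq 2$ forces valence at most one, hence the trees are linear chains and $\Br_{\bu}(n)\cong k[S_n]$, and finally that the pre-Lie operation is associative. Your write-up expands the same points (and additionally makes explicit why the differential vanishes and how relation \eqref{eq:brace1} collapses), but there is no genuine difference in route. One tiny quibble: in the brace construction the white vertices \emph{are} the inputs, so the chain is not ``topped by a leaf''---the top white vertex simply has zero children and carries the unit of $\bu^{\cu}(0)$.
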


For $\cC=\coAss$ we obtain an $A_\infty$ structure of degree 1 together with degree $0$ brace operations $x\{y_1, ..., y_n\}$. Recall the brace operad $\Br$ introduced by Gerstenhaber and Voronov \cite{GV} and let $A_\infty$ be the operad controlling $A_\infty$ algebras.

\begin{prop}
We have a pushout of operads
\[
\xymatrix{
A_\infty \ar[r] \ar[d] & \Br_{\coAss}\{1\} \ar[d] \\
\Ass \ar[r] & \Br
}
\]
\end{prop}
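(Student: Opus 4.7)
The plan is to compute the pushout directly from an explicit presentation of $\Br_{\coAss}\{1\}$ by generators and relations, and then verify that the resulting operad admits the Gerstenhaber--Voronov presentation of $\Br$. First I would identify the generators of $\Br_{\coAss}\{1\}$ as read off from its defining corollas: for each $n\geq 2$ the internal-vertex corolla gives an $A_\infty$-type operation $m_n$ corresponding to the generator of $\overline{\coAss}(n)[-1]$, while for each $n\geq 1$ the external-vertex corolla gives a brace $x\{y_1,\ldots,y_n\}$ coming from a generator of $\coAss(n)$. The natural map $A_\infty=\Omega\coAss\to \Br_{\coAss}\{1\}$ (inherited from $\Omega\cC\to\Br_\cC$ after the shift) sends the $A_\infty$-generators precisely to the $m_n$, and the three relations \eqref{eq:brace1}--\eqref{eq:brace3} together with the standard $A_\infty$-relations among the $m_n$ give a complete presentation of $\Br_{\coAss}\{1\}$.

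Next I would form the pushout along the standard resolution $A_\infty\to \Ass$, which kills $m_n$ for $n\geq 3$ and identifies $m_2$ with a strict associative product $m$ closed under the differential. The resulting pushout is thus the operad generated by $m$ together with the braces $x\{y_1,\ldots,y_n\}$, subject to the strict associativity of $m$; the brace-associativity obtained by rectifying \eqref{eq:brace1}; the rectified form of \eqref{eq:brace3}, in which the internal binary vertex becomes $m$ and all higher internal vertices vanish, yielding the usual distributivity $(xy)\{z_1,\ldots,z_n\}=\sum_i x\{z_1,\ldots,z_i\}\,y\{z_{i+1},\ldots,z_n\}$; and the rectified form of the higher-homotopy relation \eqref{eq:brace2}, where the higher $m_n$ disappear so that the equation reduces to an expression of $d(x\{y_1,\ldots,y_n\})$ as a combination of products and sub-braces, i.e.\ the usual Leibniz-type compatibility between the differential on a brace and the associative product.

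Finally I would match this presentation with the Gerstenhaber--Voronov presentation of $\Br$ from \cite{GV}, which has exactly the same generators and precisely these four families of relations. The hardest step is purely bookkeeping: verifying that the signs and the precise form of each relation produced by rectifying \eqref{eq:brace1}--\eqref{eq:brace3} agree on the nose with those written down by Gerstenhaber and Voronov, and checking that no additional relations are introduced and none are lost. A conceptual way to package the comparison is via universal properties: a morphism out of $\Br$ into an operad $\cP$ is precisely a choice of strict associative product in $\cP$ together with compatible brace operations, which by the rectification analysis above is exactly the datum of a compatible pair of morphisms $\Ass\to\cP$ and $\Br_{\coAss}\{1\}\to\cP$ agreeing on $A_\infty$; the pushout universal property then yields the isomorphism $\Ass\sqcup_{A_\infty}\Br_{\coAss}\{1\}\cong \Br$.
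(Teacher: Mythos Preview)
The paper does not give a proof of this proposition; it is stated as an observation relating the Calaque--Willwacher construction for $\cC=\coAss$ to the classical brace operad of \cite{GV}, and no argument is provided. Your approach---presenting $\Br_{\coAss}\{1\}$ via the generating corollas and the relations \eqref{eq:brace1}--\eqref{eq:brace3}, then forming the quotient that sets $m_n=0$ for $n\geq 3$ and comparing with the Gerstenhaber--Voronov presentation---is a correct and direct way to establish the claim.

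Two small cautions. First, the paper does not assert that \eqref{eq:brace1}--\eqref{eq:brace3} together with the cobar differential on internal corollas give a \emph{complete} presentation of $\Br_\cC$; it only says the generators satisfy these relations. For your argument to go through you need this completeness, which for $\cC=\coAss$ is standard (it amounts to the fact that every tree in $\Br_{\coAss}$ can be written uniquely as an iterated composite of the two generating corollas), but you should say so explicitly or cite a source. Second, relation \eqref{eq:brace2} as drawn in the paper is schematic: the full differential on an external vertex is described only by reference to \cite{DW}, and involves splitting the white vertex into a white--black pair in all possible arities, not just the arity-one case pictured. When you rectify, it is this full differential whose surviving terms (those with a single binary internal vertex) reproduce the Gerstenhaber--Voronov formula for $d(x\{y_1,\ldots,y_n\})$ in terms of the associative product. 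Your final universal-property repackaging is fine once the presentation-matching is done, but note that it presupposes the Gerstenhaber--Voronov presentation of $\Br$, so it does not shortcut the bookkeeping.
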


Now consider $\cC=\coP_n$. Calaque--Willwacher \cite{CW} following some ideas of Tamarkin \cite{Ta1} introduced a morphism of operads
\begin{equation}
\Omega(\coP_{n+1}\{1\})\to \Br_{\coP_n},
\label{eq:Pnbrace}
\end{equation}
on generators by the following rule:
\begin{itemize}
\item The generators \[\underline{x_1...x_k}\in\coLie\{1-n\}(k)\subset \coP_{n+1}\{1\}(k)\] are sent to the tree drawn in Figure \ref{fig:centerhomotopymultiplication} with the root labeled by the element \[\underline{x_1...x_k}\in\coLie\{1-n\}(k)\subset \coP_n(k).\]

Here $\underline{x_1...x_k}$ is the image of the $k$-ary comultiplication under the projection \[\coAss\{1-n\}\rightarrow \coLie\{1-n\}.\]

\item The generator \[x_1\wedge x_2\in\coComm\{1\}(2)\subset \coP_{n+1}\{1\}(2)\] is sent to the linear combination of trees shown in Figure \ref{fig:centerhomotopybracket}.

\item The generators \[x_1\wedge \underline{x_2...x_k}\in\coP_{n+1}\{1\}(k)\] for $k>2$ are sent to the tree shown in Figure \ref{fig:centerhomotopyleibniz} with the root labeled by the element
\[\underline{x_2...x_k}\in\coLie\{1-n\}(k-1)\subset \coP_n(k-1).\]

\item The rest of the generators are sent to zero.
\end{itemize}

\begin{figure}
\begin{minipage}{.3\textwidth}
\centering
\begin{tikzpicture}
\node[b] (v0) at (0, 0) {};
\node (v) at (0, -1) {};
\node[w] (v1) at (-1, 1) {$1$};
\node at (0, 1) {...};
\node[w] (v2) at (1, 1) {$k$};
\draw (v0) edge (v);
\draw (v0) edge (v1);
\draw (v0) edge (v2);
\end{tikzpicture}
\caption{Image of $\underline{x_1...x_k}$.}
\label{fig:centerhomotopymultiplication}
\end{minipage}
\begin{minipage}{.3\textwidth}
\centering
\begin{tikzpicture}
\node[w] (v1) at (-1, 0) {$1$};
\node[w] (v2) at (-1, 1) {$2$};
\node (root) at (-1, -1) {};
\draw (v1) edge (v2);
\draw (v1) edge (root);
\node at (0, 0) {$-(-1)^n$};
\node[w] (v2) at (1, 0) {$2$};
\node[w] (v1) at (1, 1) {$1$};
\node (root) at (1, -1) {};
\draw (v2) edge (v1);
\draw (v2) edge (root);
\end{tikzpicture}
\caption{Image of $x_1\wedge x_2$.}
\label{fig:centerhomotopybracket}
\end{minipage}
\begin{minipage}{.3\textwidth}
\centering
\begin{tikzpicture}
\node[w] (v0) at (0, 0) {$1$};
\node (v) at (0, -1) {};
\node[w] (v1) at (-1, 1) {$2$};
\node at (0, 1) {...};
\node[w] (v2) at (1, 1) {$k$};
\draw (v0) edge (v);
\draw (v0) edge (v1);
\draw (v0) edge (v2);
\end{tikzpicture}
\caption{Image of $x_1\wedge\underline{x_2...x_k}$}
\label{fig:centerhomotopyleibniz}
\end{minipage}
\end{figure}

\begin{prop}[Calaque--Wilwacher]
The morphism of operads
\[\Omega(\coP_{n+1}\{1\})\to \Br_{\coP_n}\]
is a quasi-isomorphism.
\label{prop:Pnbrace}
\end{prop}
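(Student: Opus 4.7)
The plan is to argue in four steps: (i) verify that the formulas on Figures \ref{fig:centerhomotopymultiplication}--\ref{fig:centerhomotopyleibniz} define a chain map of operads, (ii) identify the cohomology of the source via Koszul duality for $\P_{n+1}$, (iii) identify the cohomology of the target by a filtration argument, and (iv) check that the induced map agrees with the canonical quasi-isomorphism $\P_{n+1,\infty}\to \P_{n+1}$.

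\textbf{Step 1 (well-definedness).} The operad $\Omega(\coP_{n+1}\{1\})$ is the free operad on $\overline{\coP_{n+1}\{1\}}[-1]$; the map is specified on generators and extends uniquely to an operad morphism. Since $\coP_{n+1}\cong \Sym(\coLie\{n\})$ as a conilpotent Hopf cooperad, the generators split into three families shown in the statement. One must check that for each family the cobar differential of the generator, which is dual to the infinitesimal decomposition coproducts on $\coP_{n+1}$, is mapped to the differential in $\Br_{\coP_n}$ of the corresponding tree. For the cobrackets $\underline{x_1\dots x_k}$ this follows from the higher-homotopy relation (\ref{eq:brace2}) together with the coassociativity of the $\coLie$ cocomposition; for the binary cobracket $x_1\wedge x_2$ it is a direct unwinding of the two trees in Figure \ref{fig:centerhomotopybracket}; for the mixed generators $x_1\wedge \underline{x_2\dots x_k}$ one uses the associativity relation (\ref{eq:brace1}) together with the distributivity relation (\ref{eq:brace3}) to match the two sides.

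\textbf{Step 2 (source).} Since $\coP_{n+1}\{1\}$ is the Koszul dual cooperad of $\P_{n+1}$ (up to the appropriate shift convention) and $\P_{n+1}$ is a Koszul operad, the cobar construction $\Omega(\coP_{n+1}\{1\})$ is a cofibrant replacement of $\P_{n+1}$ and in particular its cohomology is isomorphic to $\P_{n+1}$ as a graded operad.

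\textbf{Step 3 (target).} I would filter $\Br_{\coP_n}$ by the number of internal (black) vertices of a tree. On the $E^0$ page, the only surviving piece of the differential is the internal vertex differential of each label, coming from the cobar differential on $\Omega\coP_n$ applied vertex-by-vertex, while the splittings of external vertices and the grafting terms raise the internal vertex count by one and are killed. Each tree shape decomposes as an external ``skeleton'' (a rooted tree with only white vertices, labels from $\coP_n^{\cu}$) to which are grafted chains of black vertices. Using the acyclicity statement $\Omega\coP_n \stackrel{\sim}{\to} \P_n$ from Proposition \ref{prop:weindependent} applied inside each chain, the $E^1$ page identifies with rooted trees whose only vertices are external and whose labels live in $\coP_n^{\cu}\oplus \P_n[-1]$ grafted appropriately. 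A further degree count, using that $\coP_n\{n\}\cong\Sym(\coLie\{1\})$ and matching with the free $\P_{n+1}$-operations generated by commutative products (from the external labels) and a shifted bracket (from the former black-vertex contribution), identifies $E^1$ with $\P_{n+1}$, and the spectral sequence collapses at $E^1$ for degree reasons since $\P_{n+1}$ is concentrated in a single internal grading per arity on $E^1$.

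\textbf{Step 4 and obstacle.} It remains to check that the composite $\P_{n+1}\cong H^*(\Omega(\coP_{n+1}\{1\}))\to H^*(\Br_{\coP_n})\cong \P_{n+1}$ is the identity. This is a direct verification on the two classes of generators of $\P_{n+1}$, the commutative product and the shifted bracket, by matching the representative trees. The main obstacle is Step 3: the careful combinatorial analysis of the filtration and a rigorous verification that collecting the black-vertex contributions is computed by the Koszul resolution $\Omega\coP_n$ for $\P_n$, including the Hopf-compatibility terms coming from the coproducts on $\coP_n^{\cu}$ used in the brace differential. Here one uses precisely the admissibility of the Hopf counital structure on $\coP_n$ established in Corollary \ref{cor:Hopfadmissible}.
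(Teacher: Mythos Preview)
The paper does not contain a proof of this proposition: it is stated as a result of Calaque--Willwacher and used as a black box (in particular, it is invoked in the proof of Proposition \ref{prop:Liebrace}). So there is no argument in the paper to compare your proposal against.

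That said, your Step 3 has a genuine gap. You propose to filter $\Br_{\coP_n}$ by the number of internal (black) vertices and claim that on the $E^0$ page only ``the cobar differential on $\Omega\coP_n$ applied vertex-by-vertex'' survives. But the cobar differential on an internal vertex does not preserve the black-vertex count: it takes a single black vertex labeled by $c\in\overline{\coP_n}[-1]$ to a sum of trees with \emph{two} black vertices labeled by the components of the infinitesimal decomposition coproduct of $c$. Thus both pieces of the brace differential---the cobar piece on internal vertices and the splitting of an external vertex into external+internal---raise your filtration degree by exactly one, and the $E^0$ differential is zero. Your spectral sequence then carries no information at the first step, and the identification of $E^1$ with $\P_{n+1}$ does not go through as written. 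A workable filtration (e.g.\ by the number of external vertices, or by a suitable combination of edges and vertex types) requires more care, and the resulting $E^1$ analysis is substantially more involved than your sketch suggests.

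Two smaller points: your appeals to Proposition \ref{prop:weindependent} and Corollary \ref{cor:Hopfadmissible} are misplaced. The former compares cobar constructions of a fixed $\cC$-coalgebra for quasi-isomorphic target operads; the latter concerns tensor products of $\cC$-coalgebras. Neither says anything about the internal structure of $\Br_{\coP_n}$ or about collapsing chains of black vertices.
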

Note that we have a quasi-isomorphism of operads
\[\Omega(\coP_{n+1}\{n+1\})\to \P_{n+1}\]
and hence we get a zig-zag of quasi-isomorphisms between the operads $\Br_{\coP_n}\{n\}$ and $\P_{n+1}$.

One can similarly define a morphism of operads
\[\Omega(\coP^{\theta}_{n+1}\{1\})\rightarrow \Br_{\coP_n^\theta}\]
in the following way:
\begin{itemize}
\item The generators $y\in \coLie^\theta\{1-n\}\subset \coP^\theta_{n+1}\{1\}$ are sent to the tree drawn in Figure \ref{fig:centerhomotopymultiplication} with the root labeled by $x\in\coLie^\theta\{1-n\}\subset \coP^\theta_n$.

\item The generator \[x_1\wedge x_2\in\coComm\{1\}(2)\subset \coP^\theta_{n+1}\{1\}(2)\] is sent to the linear combination of trees shown in Figure \ref{fig:centerhomotopybracket}.

\item Suppose $y\in\coLie^\theta\{1-n\}(k-1)\subset \coP^\theta_{n+1}\{1\}(k-1)$ for $k>2$ and let $x\wedge y$ be the image of $y$ under $\coP^\theta_{n+1}\{1\}(k-1)\rightarrow \coP^\theta_{n+1}\{1\}(k)$.

The generators $x\wedge y\in\coP^\theta_{n+1}\{1\}(k)$ are sent to the tree shown in Figure \ref{fig:centerhomotopyleibniz} with the root labeled by the element $y\in\coLie^\theta\{1-n\}(k-1)\subset\coP^\theta_n(k-1)$.

\item The rest of the generators are sent to zero.
\end{itemize}

The following statement is a version of Proposition \ref{prop:Pnbrace}.
\begin{prop}
The morphism of operads
\[\Omega(\coP^\theta_{n+1}\{1\})\rightarrow \Br_{\coP^\theta_n}\]
is a quasi-isomorphism.
\end{prop}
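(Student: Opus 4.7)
The plan is to reduce this curved statement to the uncurved Proposition~\ref{prop:Pnbrace} of Calaque--Willwacher via a filtration argument, in the spirit of the proof of Proposition~\ref{prop:weindependent}. The essential observation is that, as a graded cooperad, $\coP^\theta_n$ differs from $\coP_n$ only by an additional ``unit-like'' cogenerator concentrated in arity $0$: this is the $\coE_0\{-1\}$ summand in the Koszul dual $(\P_n^{\un})^{\ish}\cong \coP_n\{n\}\oplus\coE_0\{-1\}$ recalled in Section~\ref{sect:examples}, and the curving $\theta$ together with the extra pieces of the cobar/brace differentials precisely encode this extension. The same remark applies to $\coP^\theta_{n+1}\{1\}$.

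First, I would introduce on each side an increasing filtration by the total number of tree-labels coming from this extra arity-$0$ piece. On the cobar side $\Omega(\coP^\theta_{n+1}\{1\})$ this counts cogenerators of ``extra type'' appearing in a tree; on the brace side $\Br_{\coP^\theta_n}$ it counts both internal and external vertex labels which use the additional arity-$0$ cogenerator of $\coP^{\theta,\cu}_n$. The internal differential of $\coP^\theta_\bullet$, the standard cobar coproduct term, and the ordinary cobar-like and splitting terms in the brace differential all preserve this count, while the curving contribution and the remaining constant/linear terms of the brace differential strictly decrease it by one. Hence the filtrations are compatible with the differentials on both sides.

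Second, I would verify directly from the explicit tree formulas (Figures~\ref{fig:centerhomotopymultiplication}--\ref{fig:centerhomotopyleibniz} applied in the curved setting) that the Calaque--Willwacher morphism preserves these filtrations and that, modulo lower filtration, it reduces to the uncurved Calaque--Willwacher morphism. Consequently the induced map on associated graded operads is identified with
\[\Omega(\coP_{n+1}\{1\})\to \Br_{\coP_n},\]
which is a quasi-isomorphism by Proposition~\ref{prop:Pnbrace}. Since in each fixed arity and each fixed homological degree only finitely many filtration layers contribute and the filtrations are bounded below by zero, the associated spectral sequences converge, and the quasi-isomorphism of associated gradeds implies that the original morphism is a quasi-isomorphism.

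The main technical obstacle I anticipate is bookkeeping: one must set up the filtration uniformly on both sides so that every summand of the curved cobar differential (internal, coproduct, curving) and of the brace differential (internal, coproduct-splitting, and curving-type terms) behaves as claimed, and one must inspect the explicit formula for the curved Calaque--Willwacher map to confirm that its ``leading'' part coincides with the uncurved map of Proposition~\ref{prop:Pnbrace}. Once these compatibilities are established the conclusion is automatic from the uncurved case.
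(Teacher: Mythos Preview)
The paper does not actually prove this proposition: it simply records it as ``a version of Proposition~\ref{prop:Pnbrace}'' and moves on. Your filtration strategy is exactly the natural way to fill this gap, and it matches how the paper reduces curved statements to uncurved ones elsewhere (cf.\ the last paragraph of the proof of Proposition~\ref{prop:barlaxmonoidal}). So in spirit you are doing what the paper implicitly intends.

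One point does need tightening. After you filter by the number of ``extra'' arity-$0$ labels and kill the curving, the associated graded map is \emph{not} literally the uncurved Calaque--Willwacher map
\[\Omega(\coP_{n+1}\{1\})\to \Br_{\coP_n}.\]
Passing to associated graded removes the curving but retains the extra arity-$0$ cogenerator coming from the $\coE_0\{-1\}$ summand in $(\P_n^{\un})^{\ish}$; so what you really obtain is the map with $\coP_{n+1}$ and $\coP_n$ replaced by $\coP_{n+1}\oplus\coE_0\{\ldots\}$ and $\coP_n\oplus\coE_0\{\ldots\}$ (zero curving, zero differential on the extra piece). This is not Proposition~\ref{prop:Pnbrace} on the nose. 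You still need a short further argument---e.g.\ a second filtration by the number of such extra labels, whose associated graded pieces are, in each arity and degree, finite direct sums of copies of the uncurved map tensored with identity pieces---to conclude. This is easy, but it is an honest extra step rather than just bookkeeping, and your write-up currently elides it by asserting the direct identification with $\Omega(\coP_{n+1}\{1\})\to \Br_{\coP_n}$.
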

This proposition implies that we have a zig-zag of weak equivalences between the operads $\Br_{\coP^\theta_n}\{n\}$ and $\P_{n+1}^{\un}$.

Finally, consider the case $\cC=\coComm$. By construction we have a morphism of operads
\[\Lie\rightarrow \Br_\cC\]
given by sending the Lie bracket to the combination
\[
\begin{tikzpicture}
\node[w] (v1l) at (-1, 0) {$1$};
\node[w] (v2l) at (-1, 1) {$2$};
\node (v0l) at (-1, -1) {};
\draw (v1l) edge (v0l);
\draw (v1l) edge (v2l);

\node at (0, 0) {$-$};

\node[w] (v1r) at (1, 1) {$1$};
\node[w] (v2r) at (1, 0) {$2$};
\node (v0r) at (1, -1) {};
\draw (v2r) edge (v0r);
\draw (v2r) edge (v1r);
\end{tikzpicture}
\]

\begin{prop}
The morphism of operads $\Lie\rightarrow \Br_\coComm$ is a quasi-isomorphism.
\label{prop:Liebrace}
\end{prop}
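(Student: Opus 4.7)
The plan is to combine an explicit tree description of $\Br_\coComm(n)$ with a spectral sequence argument in the spirit of Calaque--Willwacher's proof of Proposition \ref{prop:Pnbrace}. First I would write down $\Br_\coComm(n)$ as the chain complex spanned by rooted trees with $n$ external (white) vertices labeled $1, \ldots, n$ and an arbitrary number $k \geq 0$ of internal (black) vertices, where each internal vertex has at least two children and each external vertex has any number of children; such a tree sits in degree $-k$. The differential has two parts: on an internal vertex the cocommutative cobracket splits it into two internal vertices by partitioning its children between them; on an external vertex the brace differential replaces it by an external vertex with some subset of its original children together with a new internal vertex carrying the remaining children.

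Next I would introduce a filtration on $\Br_\coComm(n)$ that decouples the internal cobar-type combinatorics from the external brace combinatorics and pass to the associated spectral sequence. After choosing the filtration appropriately, the $E^0$-differential acts only on the maximal subforests of internal vertices, and each such subforest contributes a copy of $\Omega(\overline{\coComm})$, which by the classical Koszul duality between $\Lie$ and $\coComm$ is quasi-isomorphic to $\Lie$ up to a degree shift. Identifying $E^1$ with a complex of trees whose internal subforests have been replaced by $\Lie$-operations, one verifies that the remaining external-vertex portion of the differential is acyclic outside the image of the natural map $\Lie \to E^1$.

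Finally, I would check convergence of the spectral sequence (the filtration is bounded below and exhaustive) and compare with the explicit morphism $\Lie \to \Br_\coComm$ sending $[x,y]$ to $x\{y\} - y\{x\}$. Under the identification of $E^\infty$ with $\Lie(n)$ concentrated in degree zero this morphism becomes the identity, yielding the desired quasi-isomorphism.

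The main obstacle is the second step: choosing the right filtration and carrying out the combinatorial analysis of the $E^0$ and $E^1$ pages. The brace differential contains many terms, one for each partition of the children of an external vertex, and the filtration must be tuned finely enough to isolate the cobar combinatorics of $\coComm$ on $E^0$ while still controlling the effect of the relations (\ref{eq:brace1})--(\ref{eq:brace3}) on subsequent pages. Tracking the signs generated by the degree shifts on internal vertices and by the $S_n$-actions on external children also requires care; once these bookkeeping issues are settled the argument reduces to classical Koszul duality.
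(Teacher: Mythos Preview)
Your spectral sequence approach is plausible and, with enough care in the bookkeeping you flag, could be carried through; but it takes a different and more laborious route than the paper. The paper gives a one-paragraph reduction to Proposition~\ref{prop:Pnbrace}: one introduces a weight grading on $\coP_n$ by declaring the cobracket to have weight $1$ and the comultiplication weight $0$, which makes $\coP_n^{\cu}$ a graded Hopf cooperad and hence induces a compatible grading on $\Br_{\coP_n}$. The morphism \eqref{eq:Pnbrace} respects these gradings (with the analogous grading on $\coP_{n+1}$), so passing to weight-$0$ components of the quasi-isomorphism in Proposition~\ref{prop:Pnbrace} yields a quasi-isomorphism $L_\infty = \Omega(\coComm\{1\}) \to \Br_{\coComm}$, which then factors as $L_\infty \stackrel{\sim}\to \Lie \to \Br_{\coComm}$.

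Your direct argument has the virtue of being self-contained, but the $E^1$ analysis you identify as the main obstacle is precisely where the content of Proposition~\ref{prop:Pnbrace} lives: you would in effect be reproving the Calaque--Willwacher result in the special cocommutative case rather than invoking it. The paper's trick trades that combinatorial work for dependence on Proposition~\ref{prop:Pnbrace} plus a two-line grading observation; your route would buy independence from that proposition at the cost of redoing its filtration argument by hand.
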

\begin{proof}
Introduce a grading on $\coP_n$ by setting the cobracket to be of weight $1$ and the comultiplication of weight $0$. In this way $\coP_n^{\cu}$ becomes a graded Hopf cooperad, i.e. a cooperad in graded commutative dg algebras. This induces a grading on the brace construction $\Br_{\coP_n}$, where the weight of a tree is given by the sum of the weights of the labels. The morphism \eqref{eq:Pnbrace} is compatible with the gradings if we introduce the grading on $\coP_{n+1}$ where again the cobracket has weight $1$ and the comultiplication has weight $0$.

Let $L_\infty = \Omega(\coComm\{1\})$ be the operad controlling $L_\infty$ algebras. Passing to weight $0$ components and using Proposition \ref{prop:Pnbrace}, we obtain a quasi-isomorphism
\[L_\infty\to \Br_\coComm\]
which by construction factors as
\[L_\infty\stackrel{\sim}\to \Lie\rightarrow \Br_{\coComm}\]
and the claim follows.
\end{proof}

\subsubsection{Aside: the operad $\Br_{\coComm}$}
\label{sect:brcocomm}

Let us explain the role the operad $\Br_{\coComm}$ plays in Lie theory (see also Proposition \ref{prop:LieadditivityU}).

Suppose that $\mathfrak{g}$ is a Lie algebra with a lift of the structure to a $\Br_\coComm$-algebra. That is, $\mathfrak{g}$ has a pre-Lie structure $x\circ y$ and degree $L_\infty$ brackets $\{x_1,...,x_n\}$ such that
\begin{align*}
[x, y] &= x\circ y - (-1)^{|x||y|}y\circ x \\
\{x, y\} &= \d(x\circ y) - (\d x)\circ y - (-1)^{|x|} x\circ \d y,
\end{align*}
where $[x,y]$ is the original degree $0$ Lie bracket. Note that the degree $1$ $L_\infty$ brackets are uniquely determined from the pre-Lie operation. Then the $\Br_{\coComm}$-algebra structure allows one to replace the morphism of Lie algebras $0\rightarrow \mathfrak{g}$ by a fibration in the following way.

Consider the complex
\begin{equation}
\widetilde{\mathfrak{g}}=\mathfrak{g}\oplus\mathfrak{g}[-1]
\label{eq:Lieresolution}
\end{equation}
with the identity differential. We define an $L_\infty$ algebra structure on $\widetilde{\mathfrak{g}}$ as follows:
\begin{itemize}
\item The bracket on the first term is the original bracket $[-,-]$.

\item The $L_\infty$ structure on the second term is given by the operations $\{-,...,-\}$.

\item The $L_\infty$ brackets $[x, \s y_1, ..., \s y_n]$ where $x\in\g$ and $\s y_i\in\g[-1]$ land in $\g[-1]$ and are given by the symmetric braces $\g\otimes \Sym(\g)\rightarrow \g$.
\end{itemize}

It is immediate that $\widetilde{\mathfrak{g}}\rightarrow \mathfrak{g}$ is a fibration of $L_\infty$ algebras (i.e. it is a degreewise surjective morphism) and, moreover, that the morphism $0\rightarrow \widetilde{\mathfrak{g}}$ is a quasi-isomorphism.

In particular, we see that the $L_\infty$ algebra structure on
\[\Omega\mathfrak{g} = 0\oplus_{\mathfrak{g}} 0\cong \mathfrak{g}[-1]\]
is given by the degree 1 $L_\infty$ brackets $\{-, ..., -\}$.

\begin{example}
Let $A$ be a commutative dg algebra over a field $k$ and denote by $\T_A=\Der_k(A, A)$ the complex of derivations. Suppose $\nabla$ is a flat torsion-free connection on the underlying graded algebra, i.e. it defines a morphism of graded vector spaces
\[\nabla\colon \T_A\otimes_k \T_A\rightarrow \T_A\]
such that
\begin{align}
[v, w] &= \nabla_v w - (-1)^{|v||w|}\nabla_w v \label{eq:torsionfree} \\
\nabla_{[v, w]} &= \nabla_v \nabla_w - (-1)^{|v||w|}\nabla_w \nabla_v \label{eq:flat}
\end{align}
for two vector fields $v,w\in\T_A$.

Then the Lie algebra structure on $\T_A$ given by the commutator of derivations is lifted to a $\Br_\coComm$-algebra structure, where the pre-Lie structure is given by the connection:
\[v\circ w = \nabla_v w.\]
Indeed, equation \eqref{eq:torsionfree} implies that $\nabla_v w$ lifts the Lie bracket of vector fields and equation \eqref{eq:flat} implies that it is indeed a pre-Lie bracket.

Therefore, we see that the $L_\infty$ structure on
\[\Omega \T_A = \T_A[-1]\]
is given by
\[\{v, w\} = \d(\nabla_v w) - \nabla_{\d v} w - (-1)^{|v|}\nabla_v (\d w).\]
In this way we discover exactly the Atiyah bracket of vector fields as defined by Kapranov, see \cite[Section 2.5]{Ka}.
\end{example}

\subsection{Additivity for brace algebras}
\label{sect:bracebarcobar}

Recall that we have a morphism of operads $\Omega \cC\rightarrow \Br_\cC$. In particular, a brace algebra has a bar complex which is a $\cC$-coalgebra. Now we are going to introduce an associative multiplication on the bar complex making the diagram
\[
\xymatrix{
\alg_{\Br_{\cC}} \ar[r] \ar@{-->}^{\B}[d] & \alg_{\Omega\cC} \ar^{\B}[d] \\
\alg(\coalg_{\cC}) \ar[r] & \coalg_{\cC}
}
\]
commute.

\begin{remark}
Consider a bialgebra $C\in \alg(\coalg_{\cC^{\cu}}^{\coaug})$. The unit of the symmetric monoidal structure on $\coalg_{\cC^{\cu}}^{\coaug}$ is given by $k$ with the identity coaugmentation. Therefore, the unit morphism for $C$ is a morphism of coaugmented $\cC^{\cu}$-coalgebras
\[k\rightarrow C.\]
Compatibility with the coaugmentation on $C$ implies that the unit morphism $k\rightarrow C$ coincides with the coaugmentation $k\rightarrow C$.
\end{remark}

Let $A$ be a $\Br_{\cC}$-algebra and consider $\cC^{\cu}(A)$, the cofree conilpotent $\cC^{\cu}$-coalgebra, equipped with the bar differential. It is naturally coaugmented using the decomposition \[\cC^{\cu}(0)\cong \cC(0)\oplus k.\]

We introduce the unit on $\cC^{\cu}(A)$ to be given by the coaugmentation $k\rightarrow \cC^{\cu}(A)$. The multiplication
\[\cC^{\cu}(A)\otimes \cC^{\cu}(A)\rightarrow \cC^{\cu}(A)\]
is uniquely specified on the cogenerators by a morphism
\[\cC^{\cu}(A)\otimes \cC^{\cu}(A)\rightarrow A.\]
In turn, it is defined via the composite
\[\cC^{\cu}(A)\otimes \cC^{\cu}(A)\rightarrow A\otimes \cC^{\cu}(A)\rightarrow A,\]
where the first morphism is induced by the counit on $\cC^{\cu}$ and the second morphism is given by braces, i.e. the morphism
\[\cC^{\cu}(n)\otimes A\otimes A^{\otimes n}\rightarrow A\]
is given by applying the second corolla in Figure \ref{fig:brgenerating} with the root labeled by the element of $\cC^{\cu}(n)$ and the leaves labeled by the unit $k\rightarrow \cC^{\cu}(0)$. The following statement is shown in \cite[Proposition 3.4]{MS}.

\begin{prop}
This defines a unital dg associative multiplication on $\cC^{\cu}(A)$ compatible with the $\cC^{\cu}$-coalgebra structure.
\end{prop}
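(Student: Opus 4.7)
The plan is to exploit the cofreeness of $\cC^{\cu}(A)$ as a coaugmented $\cC^{\cu}$-coalgebra on $A$: every morphism $\cC^{\cu}(A)^{\otimes k}\rightarrow \cC^{\cu}(A)$ of coaugmented $\cC^{\cu}$-coalgebras is uniquely determined by its composition with the cogenerating projection to $A$, and conversely any such graded map on cogenerators extends uniquely. In particular, the multiplication map is well-defined as a morphism of graded $\cC^{\cu}$-coalgebras, and its compatibility with the $\cC^{\cu}$-coalgebra structure is automatic from the construction. Thus only four things remain to be checked: unitality, associativity, compatibility with the differential, and the fact that the tensor-product $\cC^{\cu}$-coalgebra structure used here is indeed the Hopf one.

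For unitality, I would compute $e_0\cdot v$ and $v\cdot e_0$, where $e_0\in k\subset \cC^{\cu}(0)$ is the coaugmentation element. Projected to $A$ via the cogenerating map, the product $e_0\cdot v$ uses the brace $x\{c\mid y_1,\ldots,y_n\}$ with the distinguished leaves labelled by the unit of the Hopf counital structure in exactly the configuration degenerating the brace to the extraction of a cogenerator; by definition of the counit used in the brace operad (cooperadic counit on $\cC^{\cu}(1)$ together with the Hopf projection $\cC^{\cu}(0)\to k$), this equals $\pi(v)$, the projection of $v$ to $A$. By cofreeness this forces $e_0\cdot v=v$, and symmetrically on the right.

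For associativity, compare $(u\cdot v)\cdot w$ and $u\cdot(v\cdot w)$ after projecting to $A$. Unwinding definitions, both sides are expressed as iterated braces in which the coproduct on $\cC^{\cu}$ distributes a label over several children; the Hopf structure on $\cC^{\cu}$ ensures the combinatorics matches. The resulting identity is exactly the brace associativity relation \eqref{eq:brace1}, with the sum on the right-hand side tracking the decomposition of a $\cC^{\cu}$-label via its coproduct. Cofreeness then upgrades the equality of projections to equality of coalgebra morphisms.

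Differential compatibility is the main obstacle. The bar differential on $\cC^{\cu}(A)$ has three kinds of contributions: the internal differential of $A$, the differential from internal vertices coming through $\Omega\cC\to\Br_\cC$ (the operations $m(c\mid x_1,\ldots,x_r)$), and the differential which splits a cogenerator into an internal-over-external vertex pair. To verify $\d(u\cdot v)=(\d u)\cdot v+(-1)^{|u|}u\cdot(\d v)$, project to $A$: the defect between the two sides is a sum of terms in which a would-be internal vertex appears at the boundary between the ``$u$-part'' and the ``$v$-part'' of a brace. The higher-homotopy relation \eqref{eq:brace2} asserts precisely that the differential of a brace operation is the sum of operations obtained by inserting an internal vertex at positions $0$ and $n$, and these are exactly the boundary terms that implement left and right multiplication in the product. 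Matching signs and terms between the cobar-type differential and the two ``boundary'' summands of \eqref{eq:brace2} closes the verification; this matching is the delicate step, but it is essentially combinatorial once the brace relations and the cofreeness principle are in place.
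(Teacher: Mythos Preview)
The paper does not give its own proof here; it cites \cite[Proposition 3.4]{MS}. Your outline follows the standard strategy used there: exploit cofreeness of $\cC^{\cu}(A)$ to reduce every verification to its projection on $A$, then match the resulting identities against the generating relations of $\Br_\cC$. Unitality and the use of \eqref{eq:brace1} for associativity are correct.

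Your treatment of the differential has a gap. First, the bar differential on $\cC^{\cu}(A)$ has only two ingredients: the internal differential on $A$ (and on $\cC$), and the twist by the $\Omega\cC$-structure via the operations $m(c\mid x_1,\ldots,x_r)$. The ``splitting of a vertex into internal-over-external'' you describe is the differential on the operad $\Br_\cC$, not a component of the bar differential on $\cC^{\cu}(A)$. Second, and more substantively, compatibility of the multiplication with the bar differential requires the distributivity relation \eqref{eq:brace3} in addition to \eqref{eq:brace2}. When the bar differential acts on the left factor, the projection $\pi_A\big((\d u)\cdot v\big)$ produces terms of the form $m(c\mid x_1,\ldots,x_k)\{c'\mid y_1,\ldots,y_m\}$, a brace whose zeroth slot is the output of an internal operation; relation \eqref{eq:brace3} is precisely what rewrites this as a sum of trees with an internal root, which then match the corresponding contributions to $\pi_A\big(\d(u\cdot v)\big)$. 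Relation \eqref{eq:brace2} handles the remaining interaction, and note that the sum there runs over all positions $i=0,\ldots,n$, not only the endpoints as you wrote. Without invoking \eqref{eq:brace3} the verification does not close.
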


For a $\Br_{\cC}$-algebra $A$ we denote by $\B A=\cC^{\cu}(A)$ the bar complex equipped with the bar differential and the above associative multiplication. This defines a functor
\[\B\colon \alg_{\Br_{\cC}}\to \alg(\coalg_{\cC^{\cu}}^{\coaug}).\]

Now suppose the Hopf unital structure on $\cC$ is admissible. Then the symmetric monoidal structure on $\coalg^{\coaug}_{\cC^{\cu}}$ preserves weak equivalences and hence $\coalg^{\coaug}_{\cC^{\cu}}[\kos^{-1}]$ is a symmetric monoidal $\infty$-category.

Since the bar functor preserves weak equivalences, the composite functor
\[\alg_{\Br_{\cC}}[\qis^{-1}]\to \alg(\coalg_{\cC})[\kos^{-1}]\to \ialg(\coalg_{\cC}[\kos^{-1}])\]
gives rise to the additivity functor
\begin{equation}
\add\colon \ialg_{\Br_{\cC}}\to \ialg(\ialg_{\Omega\cC}).
\label{eq:additivityfunctor}
\end{equation}

We do not know if the functor \eqref{eq:additivityfunctor} is an equivalence in general, but we prove that it is the case for Lie and Poisson algebras. Note that in the case $\cC=\coAss$ we obtain a functor
\[\ialg_{\E_2}\cong \ialg_{\Br}\to \ialg(\ialg)\]
which we expect coincides with the Dunn--Lurie equivalence \cite[Theorem 5.1.2.2]{HA}.

\subsection{Additivity for Lie algebras}
\label{sect:Lieadditivity}

In this section we work out how the additivity functor \eqref{eq:additivityfunctor} looks like for Lie algebras and prove that it is an equivalence. We consider the cooperad $\cC=\coComm$.

We have a functor
\[\U\colon \alg_{\Lie}\to \alg(\coalg^{\coaug}_{\coComm^{\cu}})\]
which sends a Lie algebra to its universal enveloping algebra. The following is proved e.g. in \cite[Theorem 3.8.1]{Ca}.

\begin{thm}[Cartier--Milnor--Moore]
The universal enveloping algebra
\[\U\colon \alg_{\Lie}\to \alg(\coalg^{\coaug}_{\coComm^{\cu}})\]
is an equivalence of categories.
\label{thm:CartierMilnorMoore}
\end{thm}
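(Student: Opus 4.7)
The plan is to exhibit an inverse functor via primitives and verify that unit and counit of the resulting adjunction are isomorphisms, using the Poincar\'{e}--Birkhoff--Witt theorem and conilpotency in an essential way.

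First I would construct a candidate inverse
\[\mathrm{Prim}\colon \alg(\coalg^{\coaug}_{\coComm^{\cu}})\to \alg_{\Lie}\]
sending a coaugmented cocommutative bialgebra $C$, with coaugmentation $\eta\colon k\rightarrow C$, to the subspace of primitives
\[\mathrm{Prim}(C)=\{x\in C \mid \Delta(x)=x\otimes \eta(1)+\eta(1)\otimes x\}\]
equipped with the commutator bracket $[x,y]=xy-(-1)^{|x||y|}yx$, which lands in $\mathrm{Prim}(C)$ since $\Delta$ is an algebra map. The universal property of $\U$ produces a natural unit $\g\rightarrow \mathrm{Prim}(\U(\g))$ and a natural counit $\U(\mathrm{Prim}(C))\rightarrow C$; it suffices to check both are isomorphisms.

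For the unit, the Poincar\'{e}--Birkhoff--Witt theorem identifies $\U(\g)$ with $\Sym(\g)$ as cocommutative coalgebras via symmetrization $x_1\cdots x_n\mapsto \tfrac{1}{n!}\sum_{\sigma\in S_n}\pm x_{\sigma(1)}\otimes\cdots\otimes x_{\sigma(n)}$, and under this identification the primitives of $\Sym(\g)$ with its shuffle coproduct are exactly $\g$; this is where characteristic zero is used. For the counit, I would introduce the coradical filtration $F_\bullet C$ associated to the conilpotent cocommutative coalgebra $\overline{C}$, verify that it is a bialgebra filtration (the product $F_p\cdot F_q\subset F_{p+q}$ follows from compatibility of product and coproduct), and show that the morphism $\U(\mathrm{Prim}(C))\rightarrow C$ is compatible with coradical filtrations.

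The main obstacle, and the heart of the proof, is checking that the counit is an isomorphism on associated graded pieces. One reduces to the case where $C$ is primitively generated; then the commutative graded bialgebra $\mathrm{gr}\, C$ is generated by $\mathrm{gr}_1 C=\mathrm{Prim}(C)$, and one has a surjection $\Sym(\mathrm{Prim}(C))\twoheadrightarrow \mathrm{gr}\,C$ of cocommutative bialgebras. Injectivity follows by a standard argument with divided powers in characteristic zero: any nonzero element of $\Sym(\mathrm{Prim}(C))$ of bidegree $(n,n)$ in the symmetric algebra/coradical filtration pairs nontrivially with $\mathrm{Prim}(C)^{\otimes n}$ via iterated coproduct, and this pairing factors through $\mathrm{gr}\,C$. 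Since in characteristic zero the conilpotent filtration is complete and bounded below, the associated graded isomorphism lifts to the desired isomorphism $\U(\mathrm{Prim}(C))\stackrel{\sim}{\rightarrow} C$, completing the verification that $\U$ and $\mathrm{Prim}$ are mutually inverse equivalences.
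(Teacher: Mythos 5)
The paper does not actually prove this statement: it simply cites Cartier's notes (\cite[Theorem 3.8.1]{Ca}) and, separately, uses the functor of primitive elements as the inverse later in the text. Your proposal supplies the standard proof that lives behind that citation, and it is essentially correct: primitives with the commutator bracket as the inverse functor, PBW plus the computation of the primitives of $\Sym(\g)$ in characteristic zero for the unit, and the coradical filtration for the counit. Two steps in your sketch carry more weight than your phrasing suggests, and you should be aware that they are genuine lemmas rather than formalities. First, the surjectivity of $\Sym(\mathrm{Prim}(C))\to \mathrm{gr}\,C$ is exactly the assertion that $C$ is primitively generated, so ``one reduces to the case where $C$ is primitively generated'' is not a reduction but the thing to be proved; the standard route is to show that the coradical filtration satisfies $[F_p,F_q]\subset F_{p+q-1}$ (so that $\mathrm{gr}\,C$ is commutative as well as cocommutative), that $\mathrm{gr}\,C$ has no primitives above degree $1$ (because the reduced coproduct is injective on $\mathrm{gr}_n$ for $n\geq 2$ by the definition of the coradical filtration), and then to invoke the characteristic-zero structure theorem for commutative cocommutative conilpotent bialgebras to conclude generation in degree $1$. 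Second, in the last line the relevant property of the coradical filtration is that it is increasing, exhaustive (this is precisely conilpotency) and bounded below, so that an isomorphism on associated graded pieces lifts; ``complete'' is the condition for decreasing filtrations and is not what you need here. With those two points made explicit, your argument is the standard one and matches what the cited reference does; in the dg setting of the paper one only has to add the Koszul sign conventions, which cause no difficulty over a field of characteristic zero.
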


\begin{lm}
The equivalence of categories
\[\U\colon \alg_{\Lie}\stackrel{\sim}\to \alg(\coalg^{\coaug}_{\coComm^{\cu}})\]
preserves weak equivalences.
\end{lm}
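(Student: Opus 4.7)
The plan is to reduce the claim, using the definition of weak equivalences on associative algebras in coalgebras together with the equivalence $\coalg^{\coaug}_{\coComm^{\cu}}\cong\coalg_{\coComm}$ of Lemma~\ref{lm:addunit}, to showing that for a quasi-isomorphism $f\colon\g\to\h$ of dg Lie algebras, the induced morphism of cocommutative coalgebras $\overline{\U(f)}\colon\overline{\U(\g)}\to\overline{\U(\h)}$ is a $\kos$-equivalence, i.e.\ becomes a quasi-isomorphism after applying~$\Omega$.

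The first step will be to invoke the Poincar\'e--Birkhoff--Witt theorem in its functorial form: the symmetrization map $\Sym(\g)\to\U(\g)$ is a natural isomorphism of dg cocommutative coalgebras. Under this identification, $\overline{\U(f)}$ corresponds to $\overline{\Sym(f)}\colon\Sym^{\geq 1}(\g)\to\Sym^{\geq 1}(\h)$, which is a quasi-isomorphism of underlying chain complexes because $\Sym$ is a direct sum of Schur functors and hence exact in characteristic zero.

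It then remains to prove the general statement that any quasi-isomorphism $\phi\colon C_1\to C_2$ between conilpotent cocommutative coalgebras is a $\kos$-equivalence. For this I would filter the cobar $\Omega(C)=\Lie(\overline{C}[-1])$ by bracket length, setting $F^p=\Lie^{\geq p}(\overline{C}[-1])$. This decreasing filtration is preserved by the cobar differential, and on the associated graded only the internal differential $d_C$ survives, since the piece of the cobar differential coming from the coproduct strictly increases bracket length. The $E_0$-page is thus the free graded Lie algebra on $\overline{C}[-1]$ equipped only with $d_C$, and $\phi$ induces a quasi-isomorphism on these $E_0$-pages because free Lie is a direct summand of the tensor algebra in characteristic zero. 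The filtration is Hausdorff and every element has finite bracket length, so the associated spectral sequence converges strongly, and the comparison theorem yields that $\Omega(\phi)$ is a quasi-isomorphism.

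I expect the last step to be the main obstacle: one needs to set up the filtration on the cobar with care and verify that the spectral sequence converges strongly enough for the comparison theorem to apply. The PBW reduction and the fact that $\Sym$ preserves quasi-isomorphisms in characteristic zero are both routine.
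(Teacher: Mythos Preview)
Your reduction via PBW matches the paper's proof exactly: both identify $\overline{\U(\g)}$ with $\Sym^{\geq 1}(\g)$ as conilpotent cocommutative coalgebras and then argue on that side. The divergence is in the last step. The paper does not prove the general statement that every quasi-isomorphism of conilpotent cocommutative coalgebras is a $\kos$-equivalence; instead it observes that $\Sym^{\geq 1}(\g)$ is literally the bar construction of the abelian Lie algebra $\g[-1]$, so the counit $\Omega\B(\g[-1])\to\g[-1]$ of the bar--cobar adjunction gives a natural quasi-isomorphism $\Omega\,\overline{\U(\g)}\simeq\g[-1]$, and the claim is then immediate from a two-by-two diagram. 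This is both shorter and avoids any spectral sequence.

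Your more general route has a genuine gap at the convergence step. The decreasing filtration $F^p=\Lie^{\geq p}(\overline{C}[-1])$ is exhaustive and Hausdorff, but it is \emph{not} complete: the completion is $\prod_p\Lie^p$ rather than $\bigoplus_p\Lie^p$, and ``every element has finite bracket length'' is just a restatement of Hausdorffness, not a convergence criterion. In a fixed cohomological degree infinitely many $\Lie^p$ can contribute once $C$ is unbounded, so neither the classical bounded-filtration theorem nor the Eilenberg--Moore comparison theorem applies as stated. For the specific coalgebras $\Sym^{\geq 1}(\g)$ you can repair this by using the extra weight grading by symmetric power: the coproduct and the internal differential both preserve weight, and in fixed weight $w$ only $\Lie^p$ with $p\leq w$ appear, so the bracket-length filtration becomes bounded and your argument goes through. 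But that fix already uses the special form of the coalgebra, at which point the paper's one-line identification $\Sym^{\geq 1}(\g)=\B(\g[-1])$ is the cleaner way to finish.
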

\begin{proof}
Recall that weak equivalences in $\alg(\coalg^{\coaug}_{\coComm^{\cu}})$ are created by the forgetful functor to $\coalg_{\coComm}$. Given a dg Lie algebra $\g$, the PBW theorem gives an identification of cocommutative coalgebras $\U(\g)\cong \Sym(\g)$, hence for a morphism of dg Lie algebras $\g_1\rightarrow \g_2$ we have a commutative diagram
\[
\xymatrix{
\Omega \U(\g_1) \ar^{\sim}[d] \ar[r] & \Omega \U(\g_2) \ar^{\sim}[d] \\
\g_1[-1] \ar[r] & \g_2[-1]
}
\]
where the vertical morphisms are quasi-isomorphisms. Therefore, the bottom morphism is a quasi-isomorphism iff the top morphism is a quasi-isomorphism.
\end{proof}

We get two functors
\[\alg_{\Br_{\coComm}}\to \alg(\coalg_{\coComm})\]
where one is given by the brace bar construction $\B$ and the other one is given by the composite
\[\alg_{\Br_{\coComm}}\xrightarrow{\forget} \alg_{\Lie}\xrightarrow{\U} \alg(\coalg_{\coComm})\]
where the forgetful functor is given by Proposition \ref{prop:Liebrace}. In fact, these are equivalent.

\begin{prop}
There is a natural weak equivalence
\[\U\circ\forget\stackrel{\sim}\to \B\]
of functors
\[\alg_{\Br_{\coComm}}\to \alg(\coalg_{\coComm}).\]
\label{prop:LieadditivityU}
\end{prop}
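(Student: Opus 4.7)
The plan is to use the universal property of the universal enveloping algebra to construct a natural bialgebra morphism $\alpha\colon \U\circ\forget \to \B$, and then to show that each $\alpha_A$ is an isomorphism of underlying graded cocommutative coalgebras by a standard associated-graded argument based on PBW.

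For $A\in\alg_{\Br_{\coComm}}$, the bialgebra $\B A = \coComm^{\cu}(A) \cong \Sym(A)$ has its space of primitives given by the cogenerators $A \subset \B A$. I claim the inclusion $A\hookrightarrow \B A$ is a morphism of dg Lie algebras from $\forget A$ to the commutator Lie algebra on $(\B A)^{\mathrm{prim}}$. Indeed, for primitives $x,y\in A$ the commutator $[x,y]_{\B A} = xy-(-1)^{|x||y|}yx$ is again primitive, hence lies in $A\subset \B A$ and is determined by the projection $\pi\colon \B A\to A$ onto cogenerators. By the definition of the multiplication on $\B A$ in Section~\ref{sect:bracebarcobar}, the composite $\pi\circ \mu\colon \B A\otimes \B A\to A$ sends $x\otimes y$ (with $x,y\in A$) to the pre-Lie operation $x\{y\}=x\circ y$ coming from the second corolla in Figure~\ref{fig:brgenerating} with the leaf labeled by the unit of $\coComm^{\cu}(0)$. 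Consequently $[x,y]_{\B A} = x\circ y - (-1)^{|x||y|} y\circ x$, which matches the Lie bracket on $\forget A$ by the very definition of the morphism $\Lie\to\Br_{\coComm}$ from Proposition~\ref{prop:Liebrace}. By the universal property of $\U$ in cocommutative bialgebras (Theorem~\ref{thm:CartierMilnorMoore}), this Lie morphism lifts uniquely to a dg bialgebra morphism $\alpha_A\colon \U(\forget A)\to \B A$, natural in $A$.

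It remains to show $\alpha_A$ is a weak equivalence. By PBW, $\U(\forget A)\cong \Sym(A)$ as a cocommutative coalgebra, and by construction $\B A \cong \Sym(A)$ as well; both are cofree conilpotent cocommutative coalgebras on $A$. Each side carries the coradical filtration whose $n$-th associated graded piece is $\Sym^n(A)$. Since $\alpha_A$ is a morphism of cocommutative coalgebras it preserves the coradical filtration, and the associated graded map between cofree cocommutative coalgebras is determined by its restriction to primitives, which is $\id_A$ by construction. Hence $\gr_n\alpha_A = \id_{\Sym^n(A)}$ for all $n$. Since both filtrations are exhaustive and bounded below, $\alpha_A$ is an isomorphism of underlying graded cocommutative coalgebras, and being a chain map it is therefore an isomorphism of dg coalgebras, in particular a weak equivalence in $\alg(\coalg^{\coaug}_{\coComm^{\cu}})$.

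The main obstacle is the explicit computation in the first paragraph, namely tracing through the definition of the multiplication on $\B A$ to identify $\pi([x,y]_{\B A})$ with the bracket induced by the pre-Lie operation. Once this identification is in place, the universal property of $\U$ delivers $\alpha_A$ for free, and the PBW/associated graded argument makes it an isomorphism without any further computation.
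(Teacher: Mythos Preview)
Your proof is correct and follows essentially the same route as the paper: define the map on generators via the inclusion $A\hookrightarrow \Sym(A)=\B A$, check that this is a dg Lie map (the commutator on primitives is the antisymmetrized pre-Lie operation, and on $\Sym^1$ the bar differential reduces to $d_A$), extend by the universal property of $\U$, and conclude that the resulting bialgebra map is an isomorphism. The only real difference is that where the paper invokes the Oudom--Guin theorem \cite[Theorem 2.12]{OG} for the isomorphism $\U(\g)\cong\Sym(\g)$ of cocommutative bialgebras, you supply the standard PBW/coradical-filtration argument directly; this is a self-contained substitute for that citation and yields the same conclusion.
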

\begin{proof}
Suppose $\g$ is a $\Br_{\coComm}$-algebra. In particular, $\g$ is a pre-Lie algebra and we have to produce a natural isomorphism of dg cocommutative bialgebras
\[\U(\g)\stackrel{\sim}\to\Sym(\g),\]
where $\Sym(\g)$ is equipped with the associative product using the pre-Lie structure and the differential uses the shifted $L_\infty$ brackets on $\g$.

The morphism $\U(\g)\rightarrow \Sym(\g)$ is uniquely determined by a map
\[\g\rightarrow \Sym(\g)\]
on generators which we define to be the obvious inclusion. We refer to \cite[Theorem 2.12]{OG} for the claim that it extends to an isomorphism of cocommutative bialgebras $\U(\g)\rightarrow \Sym(\g)$. The compatibility with the differential is obvious as the differential on $\g\cong \coComm(1)\otimes \g$ is simply given by the differential on $\g$.
\end{proof}

The $\infty$-category of Lie algebras is pointed, i.e. the initial and final objects coincide. Therefore, we can consider the loop functor
\[\Omega\colon \ialg_{\Lie}\to \ialg(\ialg_{\Lie})\]
given by sending a Lie algebra $\g$ to its loop object $0\times_{\g} 0$. More explicitly, since the monoidal structure on $\ialg_{\Lie}$ is Cartesian, by \cite[Proposition 4.1.2.10]{HA} we can identify $\ialg(\ialg_{\Lie})$ with the $\infty$-category of Segal monoids, i.e. simplicial objects $M_\bullet$ of $\ialg_{\Lie}$ such that $M_0$ is contractible and the natural maps $M_n\rightarrow M_1\times_{M_0}\times ...\times_{M_0} M_1$ are equivalences. Under this identification the loop object of $\g$ is defined to be the simplicial object underlying the Cech nerve of $0\rightarrow \g$:
\[
\xymatrix{
0 & 0\times_{\g} 0 \ar@<.5ex>[l] \ar@<-.5ex>[l] & 0\times_{\g} 0\times_{\g} 0 \ar@<.7ex>[l] \ar[l] \ar@<-.7ex>[l] & ...
}
\]

Its left adjoint is the classifying space functor
\[\B\colon \ialg(\ialg_{\Lie})\to \ialg_{\Lie}\]
which sends a Segal monoid in Lie algebras to its geometric realization. The following is proved in \cite[Lemma 5.3]{To} and \cite[Corollary 2.7.2]{GH}.

\begin{prop}
The adjunction
\[\adj{\B\colon \ialg(\ialg_{\Lie})}{\ialg_{\Lie}\colon \Omega}\]
is an equivalence of symmetric monoidal $\infty$-categories.
\label{prop:Lieloopequivalence}
\end{prop}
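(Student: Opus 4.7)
The plan is to prove the equivalence by combining an explicit model for the loop functor (supplied by the brace structure of Section \ref{sect:brcocomm}) with Proposition \ref{prop:luriebarrbeck}, and then to deduce the symmetric monoidal refinement.

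First, I would model $\Omega$ concretely. Given any Lie algebra $\g$, a cofibrant replacement lifts it, using Proposition \ref{prop:Liebrace}, to a $\Br_{\coComm}$-algebra. Section \ref{sect:brcocomm} then produces the resolution $\widetilde{\g} = \g \oplus \g[-1] \to \g$ which is degreewise surjective and receives a quasi-isomorphism $0 \to \widetilde{\g}$. Hence the derived pullback $0 \times_\g 0$ is modeled by the strict pullback $0 \times_{\widetilde{\g}} 0 \cong \g[-1]$, with an explicit $L_\infty$ structure given by the degree $1$ brackets $\{-,\ldots,-\}$ appearing in Section \ref{sect:brcocomm}. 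Iterating this computation for the full \v{C}ech nerve of $0 \to \g$ identifies $\Omega\g$ in $\ialg(\ialg_{\Lie})$ as a specific Segal monoid whose $n$-simplices are quasi-isomorphic to $\g[-1]^{\oplus n}$ with explicit $L_\infty$ brackets.

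Second, I would apply Proposition \ref{prop:luriebarrbeck} to the triangle
\[
\xymatrix{
\ialg_{\Lie} \ar_{\g\mapsto \g[-1]}[dr] \ar^{\Omega}[rr] && \ialg(\ialg_{\Lie}) \ar^{\mathrm{ev}_1}[dl] \\
& \ialg_{\Lie} &
}
\]
where $\mathrm{ev}_1$ sends a Segal monoid to its value on the $1$-simplex. Both legs are conservative (the shift detects quasi-isomorphisms, and for grouplike Segal monoids the level-$1$ evaluation is conservative), both preserve geometric realizations (computed levelwise on the $\Lie$ side using Proposition \ref{prop:forgetsifted}), and both admit left adjoints. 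The Beck--Chevalley condition then reduces to the identification $\Omega\g \simeq \g[-1]$ already established.

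The main obstacle is constructing the left adjoint to $\Omega$, i.e.\ the classifying space functor $\B$, and verifying its interaction with $\mathrm{ev}_1$. One route is to build $\B$ as the geometric realization of the underlying simplicial object in $\ialg_{\Lie}$, which exists by Proposition \ref{prop:forgetsifted}, and to compute $\B M$ explicitly on the Koszul dual side via Proposition \ref{prop:koszulduality}, where the Segal condition on $M$ translates into a coassociative comultiplication making $\B M$ visibly recover $\g$ when $M = \Omega \g$. For the symmetric monoidal upgrade, the Cartesian structure on $\ialg_{\Lie}$ is preserved by $\Omega$ since finite limits commute (so products of loops are loops of products), and the induced structure on $\ialg(\ialg_{\Lie})$ is pointwise Cartesian at each simplicial level, so the equivalence automatically intertwines the two Cartesian structures.
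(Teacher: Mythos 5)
The paper does not actually prove this statement: it is imported wholesale from the references cited right above it (\cite[Lemma 5.3]{To} and \cite[Corollary 2.7.2]{GH}), so any self-contained argument is by definition a different route. Unfortunately, yours has a genuine gap exactly where the real content of the proposition lives. In Proposition \ref{prop:luriebarrbeck} the left Beck--Chevalley condition is a statement about the \emph{left adjoints}: one must show that the canonical map $G_2^L \to \Omega\circ G_1^L$ is an equivalence, where $G_2^L$ is the free Segal-monoid functor on $\ialg_{\Lie}$ with respect to the \emph{Cartesian} monoidal structure and $G_1^L$ is the left adjoint of $\g\mapsto\g[-1]$ (something like $V\mapsto$ free Lie algebra on the shifted derived abelianization). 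This does not ``reduce to the identification $\Omega\g\simeq\g[-1]$''; that identification is the commutativity of the triangle itself, i.e.\ a statement about the right-hand functors. Worse, because the Cartesian product on $\ialg_{\Lie}$ does not commute with colimits, the free monoid is \emph{not} $\coprod_n \h^{\times n}$, and you have neither identified it nor compared it with $\Omega(G_1^L(\h))$. The assertion that every Segal monoid in Lie algebras is in the essential image of $\Omega$ (equivalently, is grouplike and deloops) is precisely what is nontrivial here, and in your argument it is hidden inside this unverified Beck--Chevalley condition.

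Two smaller points. First, the conservativity of $\mathrm{ev}_1$ is correct but for the wrong reason: it holds for \emph{all} Segal monoids directly from the Segal condition ($M_n\simeq M_1^{\times n}$ over $M_0\simeq 0$), with no appeal to grouplikeness --- which is just as well, since grouplikeness is part of what you are trying to prove, not a hypothesis you may assume. Second, the commutativity of your triangle requires that the $L_\infty$ structure on $0\times_{\widetilde{\g}}0\cong\g[-1]$ coming from the degree $1$ brackets $\{-,\ldots,-\}$ of Section \ref{sect:brcocomm} be \emph{naturally} equivalent to the trivial one. The paper only establishes triviality of the underlying Lie structure in Proposition \ref{prop:underlyingLietrivial}, via PBW and Proposition \ref{prop:LieadditivityOmega} --- both of which sit downstream of the present proposition --- so as written your argument risks circularity and at minimum needs an independent proof of this naturality. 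If you want a self-contained proof, the cleanest inputs are the ones the cited references use: that $\ialg_{\Lie}$ is generated under sifted colimits by free Lie algebras on shifts of $k$, that mapping spaces out of these are infinite loop spaces (so every monoid object is automatically grouplike), and then the general group-object delooping statement of \cite[Section 7.2.2]{HA}.
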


Since the operads $\Br_{\coComm}$ and $\Lie$ are quasi-isomorphic, the additivity functor \eqref{eq:additivityfunctor} becomes
\[\add\colon \ialg_{\Lie}\to \ialg(\ialg_{\Lie}).\]
Observe that now we have constructed two functors $\ialg_{\Lie}\to \ialg(\ialg_{\Lie})$: the loop functor $\Omega$ and the additivity functor $\add$.

\begin{prop}
The additivity functor $\add\colon \ialg_{\Lie}\rightarrow \ialg(\ialg_{\Lie})$ is equivalent to the loop functor $\Omega$.
\label{prop:LieadditivityOmega}
\end{prop}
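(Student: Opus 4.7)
The plan is to reduce, via the adjoint equivalence of Proposition~\ref{prop:Lieloopequivalence}, to showing $\B\circ\add\simeq\id_{\ialg_{\Lie}}$, and then to use Proposition~\ref{prop:LieadditivityU} to translate this into a classical Koszul duality statement for universal enveloping algebras.

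By Proposition~\ref{prop:Lieloopequivalence}, $(\B,\Omega)$ is an adjoint equivalence between $\ialg(\ialg_{\Lie})$ and $\ialg_{\Lie}$, so producing an equivalence $\add\simeq\Omega$ is equivalent to producing a natural equivalence $\B\circ\add\stackrel{\sim}\to\id$. By Proposition~\ref{prop:LieadditivityU}, $\add(\g)$ is naturally equivalent, as an object of $\ialg(\ialg_{\Lie})$, to the universal enveloping algebra $\U(\g)$ viewed as an associative algebra in $\coalg_{\coComm^{\cu}}^{\coaug}$ and then transported to $\ialg(\ialg_{\Lie})$ along the symmetric monoidal Koszul duality of Proposition~\ref{prop:koszulduality}.

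Under this identification, $\B(\add(\g))$ corresponds, on the coalgebra side, to the geometric realization in $\coalg_{\coComm}[\kos^{-1}]$ of the simplicial cocommutative coalgebra $[n]\mapsto\U(\g)^{\otimes n}$ arising as the Cech nerve of the $\E_{1}$-algebra $\U(\g)$, i.e.\ to the two-sided bar construction $\B_{\Ass}\U(\g)$ computed inside $\coalg_{\coComm}$. The required natural equivalence $\B(\add(\g))\simeq\g$ then reduces to the classical theorem of Cartan--Eilenberg and Quillen, which asserts a natural quasi-isomorphism of cocommutative coalgebras
\[\B_{\Ass}\U(\g)\stackrel{\sim}\longrightarrow \Sym(\g[1]),\]
the right-hand side being the Chevalley--Eilenberg coalgebra, which represents $\g$ under Lie--$\coComm$ Koszul duality. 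In the conventions of the paper this comparison is induced by the universal twisting cochain $\tau\colon\Sym(\g[1])\to\U(\g)$, $\tau(\s x)=x$; it is a weak equivalence by a filtered argument using the PBW filtration on $\U(\g)$ together with the word-length filtration on $\B_{\Ass}$, of the same flavour as in the proof of Proposition~\ref{prop:weindependent}.

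The main obstacle is this last identification $\B_{\Ass}\U(\g)\simeq\Sym(\g[1])$ at the level of cocommutative coalgebras with the correct homotopical control: both sides carry natural cocommutative coproducts (the bar inherits its coproduct from the cocommutative bialgebra structure on $\U(\g)$, while $\Sym(\g[1])$ carries the cofree/shuffle coproduct), and one must verify that $\tau$ produces not merely a quasi-isomorphism of chain complexes but a weak equivalence in $\coalg_{\coComm}[\kos^{-1}]$, which is the formalism used to present $\ialg_{\Lie}$. Once this is established, the naturality of the whole construction in $\g$ is automatic from the functoriality of $\U$ and of the bar/cobar adjunction, and the statement follows.
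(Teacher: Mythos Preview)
Your reduction is the same as the paper's: use Proposition~\ref{prop:Lieloopequivalence} to reduce to $\B\circ\add\simeq\id$, then Proposition~\ref{prop:LieadditivityU} to rewrite $\add(\g)$ as $\U(\g)\in\alg(\coalg_{\coComm})$, so that the problem becomes identifying the geometric realization $|\Bar_\bullet(k,\U\g,k)|$ in $\coalg_{\coComm}[\kos^{-1}]$ with $\C_\bullet(\g)$.

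The gap is in the next step. You assert that this $\infty$-categorical geometric realization is computed by the chain-level associative bar complex $\B_{\Ass}\U(\g)$, and then invoke Cartan--Eilenberg/Quillen. But the homotopy colimit here is taken in $\coalg_{\coComm}[\kos^{-1}]$, whose weak equivalences are \emph{not} quasi-isomorphisms; there is no a priori reason the naive totalization of the simplicial coalgebra $[n]\mapsto\U(\g)^{\otimes n}$ represents the homotopy colimit in that localization. Your ``main obstacle'' paragraph focuses on whether the comparison map is a $\kos$-weak equivalence, but the prior step --- that $\B_{\Ass}\U(\g)$ models the geometric realization at all --- is where the real work lies, and your filtered/PBW sketch does not address it.

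The paper handles exactly this point by an explicit resolution. It introduces the cone Lie algebra $\Cn(\g)=\g\oplus\g[1]$, so that $0\to\Cn(\g)$ is a quasi-isomorphism of Lie algebras and hence $k\to\U(\Cn(\g))\cong\U(\g)\otimes\Sym(\g[1])$ is a weak equivalence of left $\U(\g)$-modules in $\coalg_{\coComm}$ (using that $\U$ preserves weak equivalences). One then replaces $\Bar_\bullet(k,\U\g,k)$ by the weakly equivalent $\Bar_\bullet(k,\U\g,\U(\Cn(\g)))$; because $\U(\Cn(\g))$ is semi-free as a $\U(\g)$-module, the homotopy colimit of the latter agrees with its strict colimit $k\otimes_{\U\g}\U(\Cn(\g))\cong\C_\bullet(\g)$ as a cocommutative coalgebra. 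This bypasses both the question of whether the naive bar computes the homotopy colimit and the question of upgrading a quasi-isomorphism to a $\kos$-weak equivalence.
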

\begin{proof}
Since the classifying space functor $\B$ is an inverse to the loop functor $\Omega$ by Proposition \ref{prop:Lieloopequivalence}, we have to prove that we have an equivalence $\B\circ \add\cong \id$ of functors $\ialg_{\Lie}\rightarrow \ialg_{\Lie}$. By Proposition \ref{prop:LieadditivityU} we can write $\add$ as the composite
\begin{align*}
\alg_{\Lie}[\qis^{-1}]&\stackrel{\U}\to \alg(\coalg_{\coComm}) [\kos^{-1}] \\
&\longrightarrow \ialg(\coalg_{\coComm}[\kos^{-1}]) \\
&\cong \ialg(\ialg_{\Lie}),
\end{align*}
where the last equivalence is given by the Chevalley--Eilenberg complex which realizes the bar construction for Lie algebras.

Therefore, the statement will follow once we show that the composite
\begin{align}
\alg_{\Lie}[\qis^{-1}]&\stackrel{\U}\to \alg(\coalg_{\coComm}) [\kos^{-1}] \nonumber \\
&\longrightarrow \ialg(\coalg_{\coComm}[\kos^{-1}]) \nonumber \\
&\stackrel{\B}\longrightarrow \coalg_{\coComm}[\kos^{-1}]
\label{eq:Lieadditivitycomposite}
\end{align}
is equivalent to the Chevalley-Eilenberg complex functor
\[\C_\bullet\colon \alg_{\Lie}[\qis^{-1}]\to \coalg_{\coComm}[\kos^{-1}].\]

For an algebra $A$ in a symmetric monoidal $\infty$-category $\cC$, a right $A$-module $M$ and a left $A$-module $N$ let us denote by $\Bar_\bullet(M, A, N)$ the simplicial object of $\cC$ underlying the two-sided bar construction whose $n$-simplices are given by $M\otimes A^{\otimes n}\otimes N$. The composite \eqref{eq:Lieadditivitycomposite} applied to a Lie algebra $\g$ is then by definition $|\Bar_\bullet(k, \U\g, k)|\cong k\otimes^{\bL}_{\U\g} k$ and is a cocommutative coalgebra since $k$ and $\U\g$ are.

For a dg Lie algebra $\g$ we define following \cite[Section 2.2]{DAGX} the Lie algebra $\Cn(\g)$ which as a graded vector space is $\Cn(\g)=\g\oplus \g[1]$ equipped with the following dg Lie algebra structure:
\begin{itemize}
\item The differential is given by the identity differential from the second term to the first term and the internal differentials on the two summands of $\g$.

\item The Lie bracket on the first term is the original Lie bracket on $\g$.

\item The Lie bracket between $\s^{-1} x\in\g[1]$ and $y\in\g$ lands in $\g[1]$ and is given by
\[[\s^{-1} x, y] = \s^{-1}[x, y].\]

\item The Lie bracket on the last term is zero.
\end{itemize}

We have a quasi-isomorphism of $\g$-modules $0\rightarrow \Cn(\g)$. Therefore, after taking the universal enveloping algebra we obtain a weak equivalence of left $\U\g$-modules in cocommutative coalgebras
\[k\to \U(\Cn(\g))\cong \U\g\otimes \Sym(\g[1]),\]
where on the right we have used the PBW isomorphism.

Therefore, we have a weak equivalence of Segal monoids
\[\Bar_\bullet(k, \U\g, k)\rightarrow \Bar_\bullet(k, \U\g, \U(\Cn(\g))).\]
The homotopy colimit of $\Bar_\bullet(k, \U\g, \U(\Cn(\g)))$ is a strict colimit since $\U(\Cn(\g))$ is a semi-free left $\U\g$-module. But its strict colimit is
\[k\otimes_{\U\g}\U(\Cn(\g))\cong\C_\bullet(\g)\]
as dg cocommutative coalgebras. Therefore, we obtain a natural equivalence $|\Bar_\bullet(k, \U\g, k)|\cong \C_\bullet(\g)$ and the claim follows.
\end{proof}

Combining the previous proposition with Proposition \ref{prop:Lieloopequivalence} we obtain the following corollary.

\begin{cor}
The additivity functor
\[\add\colon \ialg_{\Lie}\to \ialg(\ialg_{\Lie})\]
is an equivalence of symmetric monoidal $\infty$-categories.
\label{cor:Lieadditivity}
\end{cor}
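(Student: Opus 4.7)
The plan is to deduce this corollary as a direct consequence of the two immediately preceding propositions. First, I would invoke Proposition \ref{prop:LieadditivityOmega}, which identifies the additivity functor $\add$ with the loop functor $\Omega\colon \ialg_{\Lie}\to \ialg(\ialg_{\Lie})$ up to natural equivalence. Then I would apply Proposition \ref{prop:Lieloopequivalence}, which asserts that the classifying space/loop adjunction $(\B, \Omega)$ is an adjoint equivalence of symmetric monoidal $\infty$-categories. Combining these two inputs immediately yields that $\add$ is an equivalence of the underlying $\infty$-categories.

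The only point that merits a separate check is that the equivalence $\add \cong \Omega$ is compatible with the symmetric monoidal structures. Since the monoidal structure on $\ialg_{\Lie}$ is Cartesian and the one on $\ialg(\ialg_{\Lie})$ is induced levelwise, any natural equivalence between product-preserving functors into $\ialg(\ialg_{\Lie})$ automatically refines to a symmetric monoidal natural equivalence. So it suffices to verify that both $\add$ and $\Omega$ preserve finite products. For $\Omega$ this is formal, since loops commute with products in any pointed $\infty$-category. For $\add$ it follows from the construction given in Section \ref{sect:bracebarcobar}, combined with the fact that the bar/cobar equivalence for $\coComm$ is symmetric monoidal (Corollary \ref{cor:Hopfadmissible} and the subsequent corollary) and that the universal enveloping algebra functor appearing in Proposition \ref{prop:LieadditivityU} sends coproducts of Lie algebras (which are the Cartesian products in $\ialg_{\Lie}$) to tensor products of cocommutative bialgebras.

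I do not expect any genuine obstacle here: the substantive content is already contained in Propositions \ref{prop:LieadditivityU}, \ref{prop:Lieloopequivalence} and \ref{prop:LieadditivityOmega}, and the corollary is their formal combination. The mildest subtlety is tracking the symmetric monoidal enhancement, but this reduces to product-preservation as explained above.
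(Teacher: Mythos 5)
Your proposal is correct and follows exactly the paper's route: the paper deduces the corollary by combining Proposition \ref{prop:LieadditivityOmega} (identifying $\add$ with the loop functor $\Omega$) with Proposition \ref{prop:Lieloopequivalence} (the loop/classifying-space adjoint equivalence of symmetric monoidal $\infty$-categories). Your extra remarks on the symmetric monoidal compatibility, via product-preservation for the Cartesian structures, are a reasonable way to fill in a point the paper leaves implicit, but they do not change the argument.
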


Note that the underlying Lie algebra of $\ialg(\ialg_{\Lie})$ is canonically trivial. Indeed, let $\iCh\to \ialg_{\Lie}$ be the functor which sends a complex $\g$ to the trivial Lie algebra $\g[-1]$.

\begin{prop}
The diagram
\[
\xymatrix{
\ialg_{\Lie} \ar^-{\add}[r] \ar[d] & \ialg(\ialg_{\Lie}) \ar[d] \\
\iCh \ar[r] & \ialg_{\Lie}
}
\]
is commutative.
\label{prop:underlyingLietrivial}
\end{prop}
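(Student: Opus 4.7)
The plan is to combine Proposition \ref{prop:LieadditivityU} with Poincar\'e--Birkhoff--Witt to compute the underlying Lie algebra of $\add(\g)$ explicitly, and then compare it with the bottom composite.

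First, I would use Proposition \ref{prop:LieadditivityU} to rewrite $\add$ as the composite
\[\ialg_{\Lie} \xrightarrow{\U} \alg(\coalg_{\coComm})[\kos^{-1}] \to \ialg(\coalg_{\coComm}[\kos^{-1}]) \simeq \ialg(\ialg_{\Lie}),\]
where the last equivalence is induced by the symmetric monoidal Koszul equivalence $\coalg_{\coComm}[\kos^{-1}] \simeq \ialg_{\Lie}$ (corollary of Proposition \ref{prop:barlaxmonoidal}). Because that equivalence is symmetric monoidal, it intertwines the forgetful functors $\ialg(-) \to (-)$ on the two sides, so forgetting the associative structure on $\add(\g)$ amounts to taking $\U\g$ and applying Koszul duality from cocommutative coalgebras to Lie algebras.

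Next, I would invoke Poincar\'e--Birkhoff--Witt to identify $\U\g \cong \Sym\g$ naturally as dg cocommutative coalgebras, where the differential on $\Sym\g$ is the one induced from the internal differential of $\g$. The key observation is that this is exactly the Chevalley--Eilenberg complex $\B(\g[-1]_{\triv})$ of the abelian Lie algebra on $\g[-1]$: the Chevalley--Eilenberg differential involves both the internal differential and the Lie bracket, and it collapses to the internal differential when the bracket vanishes. By the bar-cobar equivalence of Proposition \ref{prop:koszulduality}, the Koszul dual Lie algebra of $\U\g$ is thus $\g[-1]_{\triv}$, which is precisely the image of $\g$ under the bottom composite $\ialg_{\Lie} \to \iCh \to \ialg_{\Lie}$.

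The main obstacle I anticipate is a bookkeeping one rather than a deep one: checking that the PBW isomorphism is natural in $\g$ as a morphism of dg cocommutative coalgebras (which it is, since it is the symmetrization map $\Sym\g \to \U\g$ extended from the identity on $\g$) and that the resulting identification with $\B(\g[-1]_{\triv})$ is likewise natural. Once these naturalities are in place, commutativity of the diagram follows by a formal chase through the symmetric monoidal Koszul equivalence, with no further computation required.
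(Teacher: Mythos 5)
Your proposal is correct and follows essentially the same route as the paper: factor $\add$ through $\U$ via Proposition \ref{prop:LieadditivityU}, use PBW to identify the underlying dg cocommutative coalgebra of $\U\g$ with $\Sym(\g)$, and recognize the latter as the Chevalley--Eilenberg (bar) complex of the trivial Lie algebra $\g[-1]$. The paper's proof is just a terser version of the same argument.
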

\begin{proof}
Indeed, the PBW theorem implies that we have a commutative diagram of categories
\[
\xymatrix{
\alg_{\Lie} \ar^-{\U}[r] \ar[d] & \alg(\coalg_{\coComm}) \ar[d] \\
\Ch \ar^-{\Sym}[r] & \coalg_{\coComm}
}
\]
i.e. the underlying cocommutative coalgebra of $\U(\g)$ is isomorphic to $\Sym(\g)$. But $\Sym(\g)$ coincides with the bar construction of a trivial Lie algebra $\g[-1]$ and the claim follows.
\end{proof}

\subsection{Additivity for Poisson algebras}
\label{sect:poissonadditivity}

This section is devoted to an explicit description of the additivity functor \eqref{eq:additivityfunctor} in the case of $\P_n$-algebras and to showing that it is a symmetric monoidal equivalence of $\infty$-categories.

Consider the cooperad $\cC=\coP_n$. Since we have a zig-zag of quasi-isomorphisms between the operads $\P_{n+1}$ and $\Br_{\coP_n}\{n\}$, the additivity functor \eqref{eq:additivityfunctor} becomes
\begin{equation}
\add\colon \ialg_{\P_{n+1}}\to \ialg(\ialg_{\P_n}).
\label{eq:Poissonadditivity}
\end{equation}

Now we are going to give a different perspective on this functor closer to Tamarkin's papers \cite{Ta1} and \cite{Ta2}. This new perspective will elucidate the formulas for the morphism \eqref{eq:Pnbrace} and allow us to show that the additivity functor is a symmetric monoidal equivalence.

We are going to introduce yet another version of the bar-cobar duality for Poisson algebras, this time the dual object will be a Lie bialgebra.

\begin{defn}
An \defterm{$n$-shifted Lie bialgebra} is a dg Lie algebra $\g$ together with a degree $-n$ Lie coalgebra structure $\delta\colon \g\rightarrow \g\otimes \g[-n]$ satisfying the cocycle relation
\begin{equation}
\delta([x, y]) = (\ad_x\otimes \id + \id \otimes \ad_x)\delta(y) - (-1)^{n+|x||y|} (\ad_y\otimes \id + \id\otimes \ad_y)\delta(x).
\label{eq:Liebialgcocycle}
\end{equation}
\label{def:liebialg}
\end{defn}

We will say an $n$-shifted Lie bialgebra is \defterm{conilpotent} if the underlying Lie coalgebra is so and we denote the category of $n$-shifted conilpotent Lie bialgebras by $\bialg_{\Lie_n}$. Weak equivalences in $\bialg_{\Lie_n}$ are created by the forgetful functor
\[\bialg_{\Lie_n}\to \coalg_{\coLie}.\]

The commutative bar-cobar adjunction
\[\adj{\Omega\colon \coalg_{\coLie}}{\alg_{\Comm}\colon \B}\]
extends to a bar-cobar adjunction
\begin{equation}
\adj{\Omega\colon \bialg_{\Lie_{n-1}}}{\alg_{\P_{n+1}}\colon \B}
\label{eq:PnLiebialg}
\end{equation}
so that the diagram
\[
\xymatrix{
\alg_{\P_{n+1}} \ar@<.5ex>[r] \ar[d] & \bialg_{\Lie_{n-1}} \ar[d] \ar@<.5ex>[l] \\
\alg_{\Comm} \ar@<.5ex>[r] & \coalg_{\coLie} \ar@<.5ex>[l]
}
\]
commutes where the vertical functors are the forgetful functors.

Explicitly, if $A$ is a $\P_{n+1}$-algebra, consider $\g=\coLie(A[1])[n-1]$, the cofree conilpotent shifted Lie coalgebra equipped with the bar (i.e. Harrison) differential. By the cocycle equation a Lie bracket on $\g$ is uniquely determined by its projection to cogenerators and the map
\[\coLie(A[1])[n-1]\otimes \coLie(A[1])[n-1]\to \coLie(A[1])[n-1]\to A[n]\]
is defined to be the Lie bracket
\[A[n]\otimes A[n]\to A[n].\]
The Jacobi identity is obvious. Compatibility of the bracket on $\g$ with the bar differential follows from the Leibniz rule for $A$.

Conversely, if $\g$ is an $(n-1)$-shifted Lie bialgebra, consider $A=\Sym(\g[-n])$ equipped with the cobar differential using the Lie coalgebra structure on $\g$. The Lie bracket on $A$ by the Leibniz rule is defined on generators to be the Lie bracket on $\g$. The compatibility of the cobar differential on $A$ with the Lie structure can be checked on generators where it coincides with the cocycle equation \eqref{eq:Liebialgcocycle}.

By the definition of weak equivalences it is clear that the adjunction \eqref{eq:PnLiebialg} induces an adjoint equivalence on $\infty$-categories since the unit and counit of the adjunction are weak equivalences after forgetting down to commutative algebras and Lie coalgebras.

By the Cartier--Milnor--Moore theorem (Theorem \ref{thm:CartierMilnorMoore}) the universal enveloping algebra functor induces an equivalence of categories
\[\U\colon \alg_{\Lie}\stackrel{\sim}\to \alg(\coalg_{\coComm}).\]

If $\g$ is an $(n-1)$-shifted Lie bialgebra, we can define a $\P_n$-coalgebra structure on $\U(\g)$ as follows. By construction $\U(\g)$ is a cocommutative bialgebra and we define the cobracket on the generators to be the cobracket on $\g$. The coproduct on $\U(\g)$ is conilpotent and the cobracket on $\U(\g)$ is conilpotent if the cobracket on $\g$ is so. In this way we construct a commutative diagram
\[
\xymatrix{
\bialg_{\Lie_{n-1}} \ar^-{\sim}[r] \ar[d] & \alg(\coalg_{\coP_n}) \ar[d] \\
\alg_{\Lie} \ar^-{\sim}[r] & \alg(\coalg_{\coComm})
}
\]

Note that if $A\in\alg(\coalg^{\coaug}_{\coP^{\cu}_n})$ has an associative multiplication and a compatible $\P_n$-coalgebra structure, then the space of primitive elements is closed under the cobracket by the Leibniz rule for the $\P_n$-coalgebra structure. Thus, the inverse functor in both cases is simply given by the functor of primitive elements.

Now we are going to show that the brace bar construction for $\cC=\coP_n$ is compatible with the bar-cobar duality between $\P_{n+1}$-algebras and $(n-1)$-shifted Lie bialgebras in the following sense.

\begin{prop}
The composite functor
\begin{align*}
\alg_{\Br_{\coP_n}}&\to \alg(\coalg_{\coP_n})\\
&\to \bialg_{\Lie_{n-1}}\\
&\to \alg_{\P_{n+1}}\\
&\stackrel{\forget}\to \alg_{\Omega(\coP_{n+1}\{1\})}
\end{align*}
is weakly equivalent to the forgetful functor
\[\alg_{\Br_{\coP_n}}\to \alg_{\Omega(\coP_{n+1}\{1\})}\]
given by equation \eqref{eq:Pnbrace}.
\label{prop:bracepoissoncompatible}
\end{prop}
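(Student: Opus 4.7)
The plan is to construct a natural chain map from the forgetful functor to the composite and to verify that it intertwines the two $\Omega(\coP_{n+1}\{1\})$-algebra structures by unwinding both on the generators of $\coP_{n+1}\{1\}$. Since $\Omega(\coP_{n+1}\{1\})$ is freely generated as a dg operad by the cogenerators of $\coP_{n+1}\{1\}$, it suffices to match the operations corresponding to these cogenerators, and by construction of the morphism \eqref{eq:Pnbrace} only the three families $\underline{x_1\dots x_k}$, $x_1\wedge x_2$, and $x_1\wedge\underline{x_2\dots x_k}$ contribute.

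First I would identify the output of the composite explicitly. Using the Koszul-dual description of the Hopf counital Poisson cooperad, the cofree $\coP_n^{\cu}$-coalgebra $\coP_n^{\cu}(A)$ on a complex $A$ is, as a cocommutative coalgebra, the symmetric coalgebra on $\coLie(A[1])[n-1]$, so its space of primitives is canonically $\coLie(A[1])[n-1]$. The bar differential on $\coP_n^{\cu}(A)$ associated to the $\Br_{\coP_n}$-structure restricts on primitives to the Harrison differential coming from the commutative-multiplication component of the brace structure; the cooperadic cobracket restricts to the standard Harrison Lie cobracket; and the Lie bracket on primitives is the commutator of the associative multiplication of \cite[Proposition 3.4]{MS}. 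Since that multiplication is defined cogeneratorwise by braces, its commutator on the weight-one piece $A\subset \coLie(A[1])[n-1]$ is the antisymmetrization of the pre-Lie operation, which is precisely the tree combination of Figure \ref{fig:centerhomotopybracket}. The cocycle equation \eqref{eq:Liebialgcocycle} for the resulting $(n-1)$-shifted Lie bialgebra follows from the distributivity relation \eqref{eq:brace3}.

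Applying the cobar functor for Lie bialgebras to $\g = \coLie(A[1])[n-1]$ gives the $\P_{n+1}$-algebra $\Omega(\g)\cong \Sym(\g[-n])$, and one has a natural chain map $\iota\colon A\hookrightarrow \g[-n]\subset \Sym(\g[-n])$ sending $A$ to the weight-one part of the Harrison complex. A Koszul-type filtration argument, filtering by polynomial degree in $\Sym$ and by coradical depth in $\coLie$ as in the proof of Proposition \ref{prop:barlaxmonoidal}, shows $\iota$ is a quasi-isomorphism. To finish, I would verify that $\iota$ intertwines the two $\Omega(\coP_{n+1}\{1\})$-structures generator by generator: for $\underline{x_1\dots x_k}$ the cobar unfolds to the iterated Harrison term, matching Figure \ref{fig:centerhomotopymultiplication}; for $x_1\wedge x_2$ we recover the Lie bracket as antisymmetrized pre-Lie, matching Figure \ref{fig:centerhomotopybracket}; for the mixed generators $x_1\wedge\underline{x_2\dots x_k}$, the Leibniz relation between bracket and multiplication in $\Sym(\g[-n])$ restricts along $\iota$ to the outer-brace operation of Figure \ref{fig:centerhomotopyleibniz}.

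The main obstacle is the combinatorial bookkeeping in matching the tree-diagrammatic operations of the Calaque--Willwacher morphism \eqref{eq:Pnbrace} with the cobar differential of the Lie bialgebra $\g$, together with the sign conventions coming from the various degree shifts. Organizing the check along the gradings induced by the coradical filtration on $\coLie$ and the polynomial degree in $\Sym$ reduces it to verifications in low Harrison degree, where the formulas become explicit; freeness of $\Omega(\coP_{n+1}\{1\})$ then completes the argument.
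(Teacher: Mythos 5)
Your identification of the first two stages of the composite---passing to primitives of $\coP_n^{\cu}(A)$ to get the Harrison complex $\g$ with the bar differential of the homotopy commutative structure, the cofree Lie cobracket, and a Lie bracket obtained by antisymmetrizing the brace multiplication---is essentially the paper's argument. Two small caveats there: the paper records the corestriction of the bracket to cogenerators as the \emph{full} family of braces $x\{c|y_1,\dots,y_m\}$ with $c\in\coLie\{1-n\}(m)$, not only the binary pre-Lie commutator on the weight-one piece (you need the higher braces to account for the generators of Figure \ref{fig:centerhomotopyleibniz}); and the compatibility of bracket and cobracket follows from the multiplication being a morphism of $\coP_n$-coalgebras (\cite[Proposition 3.4]{MS}), not from the distributivity relation \eqref{eq:brace3}.

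The genuine gap is in your last step. The map $\iota\colon A\hookrightarrow \g[-n]\subset\Sym(\g[-n])$ is a chain map and a quasi-isomorphism (it splits the counit), but it is not a morphism of $\Omega(\coP_{n+1}\{1\})$-algebras and no generator-by-generator check can make it one: the binary product generator acts on $\Sym(\g[-n])$ by the free symmetric product, so $\iota(x)\cdot\iota(y)$ lies in $\Sym^2(\g[-n])$, whereas $\iota$ applied to the product of $x$ and $y$ in $A$ lies in the weight-one summand $\g[-n]$; these cannot agree. The comparison must run in the opposite direction. The correct move---and the content of the paper's proof---is to recognize $\g$, with exactly the differential, cobracket and bracket you described, as the bar construction of $A$ regarded as an $\Omega(\coP_{n+1}\{1\})$-algebra via \eqref{eq:Pnbrace}; this is where the matching with Figures \ref{fig:centerhomotopymultiplication}--\ref{fig:centerhomotopyleibniz} actually happens (the Harrison differential encodes the internal corollas, the bracket encodes the braces). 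Once that identification is made, the counit $\Omega\B A\to A$, i.e.\ the projection onto the weight-$(1,1)$ component, is a strict morphism of $\Omega(\coP_{n+1}\{1\})$-algebras and gives the desired natural weak equivalence. With the direction of the comparison reversed in this way, your argument becomes the paper's.
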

\begin{proof}
Let $A$ be a $\Br_{\coP_n}$-algebra. The functor
\[\alg_{\Br_{\coP_n}}\to \alg(\coalg_{\coP_n})\]
sends $A$ to $\coP_n(A)$ equipped with the bar differential using the homotopy $\P_n$-algebra structure on $A$.

We can identify $\coP_n\cong \coComm\circ \coLie\{1-n\}$ as symmetric sequences and hence after passing to primitives in $\coP_n(A)$ we obtain $\g=\coLie(A[1-n])[n-1]$ equipped with the bar differential using the homotopy commutative algebra structure on $A$. The Lie bracket on $\g$ is obtained by antisymmetrizing the associative multiplication on $\coP_n(A)$ and from the explicit description of the multiplication on $\coP_n(A)$ given in Section \ref{sect:bracebarcobar} we see that the projection of the bracket on the cogenerators
\[\coLie(A[1-n])[n-1]\otimes \coLie(A[1-n])[n-1]\to A\]
is defined by the morphism
\[A\otimes \coLie(A[1-n])[n-1]\to A\]
given by the brace operations $x\{c|y_1,..., y_m\}$ where $x,y_i\in A$ and $c\in\coLie\{1-n\}(m)$.

We conclude that $\coLie(A[1-n])[n-1]$ is the Koszul dual of a homotopy $\P_{n+1}$-algebra $A$ whose homotopy commutative multiplication is encoded in the differential on $\g$ which comes from the homotopy commutative multiplication in the $\Br_{\coP_n}$-algebra structure. The rest of the homotopy $\P_{n+1}$ structure on $A$ coincides with the one given by the morphism \eqref{eq:Pnbrace} by inspection.
\end{proof}

By the previous proposition the two functors of $\infty$-categories
\[\ialg_{\P_{n+1}}\to \ialg(\ialg_{\P_n})\]
given either by localizing the brace bar functor for $\cC=\coP_n$ or by localizing the bar-cobar duality between $\P_{n+1}$-algebras and shifted Lie bialgebras coincide.

\begin{prop}
The functor
\[\ialg_{\P_{n+1}}\to \ialg(\ialg_{\P_n})\]
has a natural symmetric monoidal structure.
\label{prop:Poissonadditivitysm}
\end{prop}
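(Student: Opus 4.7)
The plan is to realize the additivity functor at the 1-categorical level as a composition of functors between relative symmetric monoidal categories, each of which is (lax) symmetric monoidal with lax structure maps being weak equivalences, and then invoke Proposition \ref{prop:smlocalization} to transport the structure to the underlying $\infty$-categories. By Proposition \ref{prop:bracepoissoncompatible}, the additivity functor is given, up to natural weak equivalence, by the composite
\[\alg_{\P_{n+1}} \xrightarrow{F} \bialg_{\Lie_{n-1}} \xrightarrow{\U} \alg(\coalg_{\coP_n}) \longrightarrow \ialg(\coalg_{\coP_n}[\kos^{-1}]) \cong \ialg(\ialg_{\P_n}),\]
where $F(A) = \coLie(A[1])[n-1]$ carries its natural $(n-1)$-shifted Lie bialgebra structure and $\U$ is the universal enveloping functor equipped with the $\coP_n$-cobracket extending the Lie cobracket on generators.

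First I equip $\bialg_{\Lie_{n-1}}$ with the symmetric monoidal structure whose tensor product has underlying Lie algebra $\g_1 \oplus \g_2$ and cobracket $\delta_1 \oplus \delta_2$; this makes $\bialg_{\Lie_{n-1}}$ a relative symmetric monoidal category since weak equivalences are created by the forgetful functor to $\coalg_{\coLie}$. Next, the universal enveloping functor $\U$ is strong symmetric monoidal: the Cartier--Milnor--Moore isomorphism $\U(\g_1 \oplus \g_2) \cong \U(\g_1) \otimes \U(\g_2)$ of cocommutative bialgebras identifies the direct-sum cobracket on $\g_1 \oplus \g_2$ with the natural $\coP_n$-cobracket on the tensor product of $\coP_n$-coalgebras, as both agree on primitives and extend by the Leibniz rule. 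The functor $F$ is equipped with a lax symmetric monoidal structure via a shuffle-type projection $\coLie((A \otimes B)[1])[n-1] \to \coLie(A[1])[n-1] \oplus \coLie(B[1])[n-1]$, analogous to the map constructed in Proposition \ref{prop:barlaxmonoidal}, and this projection is a quasi-isomorphism of Lie bialgebras, hence a weak equivalence. Finally, $\alg(\coalg_{\coP_n})$ is a relative symmetric monoidal category since $\coP_n$ has an admissible Hopf counital structure by Corollary \ref{cor:Hopfadmissible}, so by Proposition \ref{prop:smlocalization} the localization functor into $\ialg(\coalg_{\coP_n}[\kos^{-1}])$ is symmetric monoidal.

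Assembling these steps via Proposition \ref{prop:smlocalization} yields the desired symmetric monoidal structure on the additivity functor. The main obstacle is the verification at the level of $F$: showing that the shuffle-type projection from the Harrison complex of $A \otimes B$ to the direct sum of Harrison complexes is both a morphism of $(n-1)$-shifted Lie bialgebras and a quasi-isomorphism. Compatibility with the bar (Harrison) differential follows from the Leibniz rule for the commutative multiplication on $A \otimes B$, while compatibility with the cobracket reduces, via the cocycle identity \eqref{eq:Liebialgcocycle}, to the Poisson-bracket identity on $A \otimes B$. The quasi-isomorphism assertion is essentially Koszul dual to Proposition \ref{prop:barlaxmonoidal} in the case $\cC = \coP_n$ and should admit the same proof via the coradical filtration and an explicit contracting homotopy.
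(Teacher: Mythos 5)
Your proof is correct and follows essentially the same route as the paper: factor the additivity functor through $\bialg_{\Lie_{n-1}}$ and $\alg(\coalg_{\coP_n})$, observe that $\U$ is strongly symmetric monoidal, and show that the bar functor to shifted Lie bialgebras becomes strongly symmetric monoidal after localization via (a Koszul-dual variant of) Proposition \ref{prop:barlaxmonoidal}. The only cosmetic difference is that the paper obtains the lax monoidal structure on $\B$ formally, from the fact that its left adjoint $\Omega\colon\bialg_{\Lie_{n-1}}\to\alg_{\P_{n+1}}$ is strongly symmetric monoidal, rather than writing down your explicit shuffle-type (oplax) projection by hand.
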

\begin{proof}
The functor $\Omega\colon \bialg_{\Lie_{n-1}}\rightarrow \alg_{\P_{n+1}}$ is symmetric monoidal, so its right adjoint $\B\colon \alg_{\P_{n+1}}\rightarrow \bialg_{\Lie_{n-1}}$ has a lax symmetric monoidal structure. Moreover, by Proposition \ref{prop:barlaxmonoidal}, $\B$ becomes strictly symmetric monoidal after localization. Finally, the universal enveloping algebra functor
\[\U\colon \bialg_{\Lie_{n-1}}\to \alg(\coalg_{\coP_n})\]
has an obvious symmetric monoidal structure and the claim follows.
\end{proof}

As a corollary, we get a sequence of functors
\[\ialg_{\P_{n+2}}\to \ialg(\ialg_{\P_{n+1}})\to \ialg(\ialg(\ialg_{\P_n})).\]
But $\ialg(\ialg(\cC))\cong \ialg_{\E_2}(\cC)$ for any symmetric monoidal $\infty$-category $\cC$ by the Dunn--Lurie additivity theorem \cite[Theorem 5.1.2.2]{HA}. Iterating this construction, we get a symmetric monoidal functor
\[\ialg_{\P_{n+m}}\to \ialg_{\E_m}(\ialg_{\P_n}).\]

The additivity functor \eqref{eq:Poissonadditivity} interacts in the obvious way with the commutative and the Lie structures on a $\P_n$-algebra as shown by the next three propositions.

\begin{prop}
The diagram
\[
\xymatrix{
\ialg_{\P_{n+1}} \ar[r] \ar[d] & \ialg(\ialg_{\P_n}) \ar[d] \\
\ialg_{\Comm} & \ialg_{\P_n} \ar[l]
}
\]
is commutative.
\label{prop:CommPoissoncompatible}
\end{prop}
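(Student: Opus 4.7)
The approach is to invoke Proposition \ref{prop:bracepoissoncompatible}, which factors the additivity functor as $\add = \U\circ \B_{\P_{n+1}}$ after passing to $\infty$-categories, where $\B_{\P_{n+1}}\colon \alg_{\P_{n+1}}\to \bialg_{\Lie_{n-1}}$ sends $A$ to the shifted Harrison complex $\g = \coLie(A[1])[n-1]$ with its Lie bracket induced by the Poisson bracket on $A$, and $\U\colon \bialg_{\Lie_{n-1}}\to \alg(\coalg_{\coP_n})$ is the universal enveloping functor. I would split the verification of the stated diagram into two pieces joined through the intermediate $\infty$-category $\coalg_{\coLie}[\kos^{-1}]\cong \ialg_{\Comm}$.

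The first piece is the square
\[
\xymatrix{
\alg_{\P_{n+1}} \ar[r]^-{\B_{\P_{n+1}}} \ar[d] & \bialg_{\Lie_{n-1}} \ar[d] \\
\alg_{\Comm} \ar[r]^-{\B_{\Comm}} & \coalg_{\coLie}
}
\]
in which the vertical arrows are the forgetful functors. It commutes strictly: by construction the underlying Lie coalgebra of $\coLie(A[1])[n-1]$ with its Harrison differential depends only on the commutative multiplication of $A$, while the Poisson bracket on $A$ contributes only to the induced Lie bracket on the bar complex, which is discarded by the right vertical arrow. This compatibility is implicit in the passage preceding Proposition \ref{prop:bracepoissoncompatible}.

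The second piece, for $\g\in \bialg_{\Lie_{n-1}}$, is a natural quasi-isomorphism of commutative algebras between $\Omega_{\Comm}(\g)$ and the underlying commutative algebra of the $\P_n$-algebra associated by Koszul duality to the $\coP_n$-coalgebra $\U(\g)$. The PBW isomorphism $\U(\g)\cong \Sym(\g)$ identifies the underlying cocommutative coalgebra, and by construction the cobracket on $\U(\g)$ is the Leibniz extension of the cobracket on $\g$. The inclusion of primitives $\g\hookrightarrow \U(\g)$ induces a map from $\Omega_{\Comm}(\g)$ to the underlying commutative algebra of $\Omega_{\coP_n}(\U(\g))$, which I would prove is a quasi-isomorphism via a filtration argument based on the coradical (equivalently, PBW polynomial) filtration: on the associated graded the cobracket contribution to the cobar differential drops out, and what remains reduces to the standard commutative bar-cobar duality between $\Comm$ and $\coLie$ together with the acyclicity of the commutative bar on a free object, in the spirit of Proposition \ref{prop:barlaxmonoidal}.

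The main obstacle is the second piece: making rigorous the claim that on the associated graded the cobracket contribution vanishes and the resulting map reduces to an instance of standard commutative Koszul duality. Tracking the filtration degrees of the tree-level cobar operations appearing in $\Omega_{\coP_n}$ is delicate because the comultiplication and the cobracket on $\U(\g)$ interact through the Leibniz rule. An alternative route would be to use the symmetric monoidal structure of Proposition \ref{prop:Poissonadditivitysm} together with Proposition \ref{prop:barlaxmonoidal} to reduce the check to free generators, much as those monoidal compatibilities are exploited elsewhere in the paper.
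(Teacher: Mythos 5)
Your route is genuinely different from the paper's, and its decisive step is left unproved. The paper's argument is a one-line observation at the level of operads: after modelling everything by restriction functors, all four arrows in the square are (derived) restrictions along operad morphisms, and the claim reduces to the strictly commuting square of operads
\[
\xymatrix{
\Omega(\coP_{n+1}\{1\}) \ar[r] & \Br_{\coP_n} \\
\Omega(\coLie\{1-n\}) \ar[u] \ar[r] & \Omega(\coP_n), \ar[u]
}
\]
which commutes by inspection of the formulas for the map \eqref{eq:Pnbrace} (the $\coLie$-generators of $\coP_{n+1}\{1\}$ go to internal corollas labelled by the corresponding elements of $\coLie\{1-n\}\subset\coP_n$, which is exactly the composite through $\Omega(\coP_n)\to\Br_{\coP_n}$). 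No bar--cobar or enveloping-algebra analysis is needed. Your factorization through $\bialg_{\Lie_{n-1}}$ is instead the mechanism the paper reserves for the harder compatibilities (Propositions \ref{prop:LiePoissonSymcompatible} and \ref{prop:LiePoissonForgetcompatible}); using it here trades a trivial check for a nontrivial one.

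Concretely, your first square is fine (the paper asserts exactly that compatibility of $\B_{\P_{n+1}}$ with $\B_{\Comm}$ when it introduces the adjunction \eqref{eq:PnLiebialg}), but your second piece --- the natural quasi-isomorphism from $\Omega_{\Comm}(\g)$ to the underlying commutative algebra of $\Omega_{\coP_n}(\U(\g))$ --- is the entire content of the statement in your formulation, and you explicitly leave it as "the main obstacle." The claim is true and your sketch is salvageable: filtering $\U(\g)$ by the PBW (coradical) filtration kills the cobracket on the associated graded, reducing you to comparing $\Sym(\g[-n])$ with $\Sym\bigl(\Omega_{\Lie}(\Sym(\g))[1-n]\bigr)$, which follows from the acyclicity of the $\Lie$/$\coComm$ bar--cobar counit $\Omega_{\Lie}(\Sym(\g))\stackrel{\sim}\to \g[-1]$ together with exactness of $\Sym$ in characteristic zero; one then checks the filtration is exhaustive and bounded below in each PBW weight so the spectral sequence converges. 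But none of this is carried out in your proposal, so as written it does not yet establish the proposition. I would either complete that filtration argument or, better, replace the whole argument by the operad-level observation above.
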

\begin{proof}
The claim immediately follows from the commutative diagram of operads
\[
\xymatrix{
\Omega(\coP_{n+1}\{1\}) \ar[r] & \Br_{\coP_n} \\
\Omega(\coLie\{1-n\}) \ar[u] \ar[r] & \Omega(\coP_n) \ar[u]
}
\]
\end{proof}

Let us denote by
\[\overline{\Sym}\colon \alg_{\Lie}\to \alg_{\P_n}\]
the functor which sends a Lie algebra $\g$ to the non-unital $\P_n$-algebra $\overline{\Sym}(\g[1-n])$, the reduced symmetric algebra on $\g[1-n]$ with the Poisson bracket induced by the Leibniz rule from the bracket on $\g$. Recall that the symmetric monoidal structure on $\alg_{\P_n}$ we consider is the one transferred under the equivalence $\alg_{\P_n}\cong \alg_{\P_n^{\un}}^{\aug}$ from the usual tensor product of augmented $\P_n$-algebras. In particular, under this equivalence the functor $\overline{\Sym}$ corresponds to $\g\mapsto \Sym(\g[1-n])$ which is symmetric monoidal. The functor $\overline{\Sym}$ preserves weak equivalences, so we obtain a symmetric monoidal functor of $\infty$-categories
\[\overline{\Sym}\colon \ialg_{\Lie}\to \ialg_{\P_n}.\]

\begin{prop}
The diagram
\[
\xymatrix{
\ialg_{\Lie} \ar^{\overline{\Sym}}[d] \ar^-{\sim}[r] & \ialg(\ialg_{\Lie}) \ar^{\overline{\Sym}}[d] \\
\ialg_{\P_{n+1}} \ar[r] & \ialg(\ialg_{\P_n}).
}
\]
is commutative.
\label{prop:LiePoissonSymcompatible}
\end{prop}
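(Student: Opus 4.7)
The plan is to verify commutativity of the square by lifting it to the Koszul-dual coalgebraic models, where both additivity functors admit explicit descriptions. By Proposition \ref{prop:bracepoissoncompatible}, Poisson additivity is (the localization of) the composite
\[\alg_{\P_{n+1}} \stackrel{\B}\longrightarrow \bialg_{\Lie_{n-1}} \stackrel{\U}\longrightarrow \alg(\coalg_{\coP_n})\]
followed by the Koszul duality $\coalg_{\coP_n}[\kos^{-1}] \simeq \ialg_{\P_n}$; by Proposition \ref{prop:LieadditivityU}, Lie additivity is (the localization of) $\alg_\Lie \stackrel{\U}\longrightarrow \alg(\coalg_{\coComm})$ followed by $\coalg_{\coComm}[\kos^{-1}] \simeq \ialg_\Lie$. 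I will compute both composites around the square on a fixed Lie algebra $\g$ in these coalgebraic models and identify them.

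The key step will be to identify the pointwise functor $\overline{\Sym}\colon \ialg(\ialg_\Lie) \to \ialg(\ialg_{\P_n})$ on the Koszul-dual side with the symmetric monoidal inclusion $i\colon \coalg_{\coComm} \to \coalg_{\coP_n}$ equipping a cocommutative coalgebra with trivial Lie cobracket. To justify this, I plan to compute the $\P_n$-cobar directly: since the cobracket on $iC$ vanishes, the cobar differential on $\Omega_{\P_n}(iC)$ involves only the cocommutative coproduct, giving a natural identification
\[\Omega_{\P_n}(iC) \cong \overline{\Sym}\bigl(\Lie(C[-1])[1-n]\bigr) \cong \overline{\Sym}\bigl(\Omega_\Lie(C)[1-n]\bigr),\]
which is precisely $\overline{\Sym}$ applied to the $\Lie$-cobar of $C$. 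The functor $i$ is manifestly symmetric monoidal (the tensor product in both categories of coalgebras is given by the same formula, and the cobracket on a tensor vanishes when both factors have vanishing cobracket), so it induces a symmetric monoidal functor $\alg(\coalg_{\coComm}) \to \alg(\coalg_{\coP_n})$ that realizes the pointwise $\overline{\Sym}$.

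For the down-then-right composite applied to $\g$, I start with $\overline{\Sym}(\g[-n]) \in \alg_{\P_{n+1}}$ and compute its shifted Lie bialgebra bar $\coLie(\overline{\Sym}(\g[-n])[1])[n-1]$. The Harrison quasi-isomorphism $\coLie(\overline{\Sym}(V)[1]) \stackrel{\sim}\longrightarrow V[1]$, with trivial Lie coalgebra structure on the right, specializes to give a quasi-isomorphism to $\g$ with trivial cobracket; the Lie bracket, determined by its projection to cogenerators as the Poisson bracket on $A = \overline{\Sym}(\g[-n])$ restricted to the generators $\g[-n] \subset A$, agrees with the original bracket on $\g$. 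Applying $\U$ yields $\U(\g) \in \alg(\coalg_{\coP_n})$ with trivial Lie cobracket. For the right-then-down composite, Lie additivity gives $\U(\g) \in \alg(\coalg_{\coComm})$, and by the identification of the previous paragraph the pointwise $\overline{\Sym}$ sends this to $\U(\g) \in \alg(\coalg_{\coP_n})$ with trivial Lie cobracket. The two composites coincide, so the diagram commutes. The main obstacle in this plan is the Koszul-dual identification of $\overline{\Sym}$; once that is in place, the remainder is a direct unwinding modulo careful tracking of shifts.
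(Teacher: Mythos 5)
Your proposal follows essentially the same route as the paper's proof: identify the right-hand $\overline{\Sym}$ with the trivial-cobracket inclusion $\triv\colon\coalg_{\coComm}\to\coalg_{\coP_n}$ on the Koszul-dual side (the paper records this as the strictly commuting square $\Omega_{\P_n}\circ\triv=\overline{\Sym}\circ\Omega_{\Lie}$), and then reduce the remaining square to the statement that $\B\Omega\simeq\id$ on trivial-cobracket shifted Lie bialgebras, so both composites land on $\U(\g)$ with trivial cobracket. The one point to adjust is the direction of your Harrison equivalence: the projection $\coLie(\overline{\Sym}(\g[-n])[1])[n-1]\to\g$ is not a map of Lie coalgebras (the cobracket of a length-two word has a nonzero component in length-one tensor length-one), so you should instead use the unit of the bar--cobar adjunction, i.e.\ the inclusion of generators $\triv\,\g\to\B\Omega(\triv\,\g)$, which is a genuine weak equivalence in $\bialg_{\Lie_{n-1}}$ --- exactly as in the paper.
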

\begin{proof}
Let us denote by
\[\Omega_{\Lie}\colon\coalg_{\coComm}\to \alg_{\Lie}\]
the cobar complex for Lie algebras and by
\[\Omega_{\P_n}\colon\coalg_{\coP_n}\to \alg_{\P_n}\]
the cobar complex for $\P_n$-algebras which we can factor as
\[\coalg_{\coP_n}\xrightarrow{\Omega_{\Lie}} \bialg_{\Lie_{n-2}}\to \alg_{\P_n}.\]

Consider the symmetric monoidal functors
\[\triv\colon \coalg_{\coComm}\to \coalg_{\coP_n}\]
and
\[\triv\colon \alg_{\Lie}\to \bialg_{\Lie_{n-2}}\]
which assign trivial cobrackets. We have a commutative diagram
\[
\xymatrix{
\coalg_{\coComm} \ar^-{\Omega_{\Lie}}[r] \ar^{\triv}[d] & \alg_{\Lie} \ar^{\triv}[d] \ar^{\overline{\Sym}}[dr] \\
\coalg_{\coP_n} \ar^-{\Omega_{\Lie}}[r] & \bialg_{\Lie_{n-2}} \ar^-{\Omega}[r] & \alg_{\P_n}
}
\]

Therefore, the claim will follow once we show that the diagram
\[
\xymatrix{
\alg_{\Lie} \ar@{=}[r] \ar^{\overline{\Sym}}[d] \ar^{\overline{\Sym}}[d] & \alg_{\Lie} \ar^-{\U}[r] \ar^{\triv}[d] & \alg(\coalg_{\coComm}) \ar^{\triv}[d] \\
\alg_{\P_{n+1}} \ar[r] & \bialg_{\Lie_{n-1}} \ar^-{\U}[r] & \alg(\coalg_{\coP_n})
}
\]
commutes up to a weak equivalence. It is clear that the square on the right commutes strictly and we are reduced to showing commutativity of the square on the left.

We can factor the functor
\[\overline{\Sym}\colon \alg_{\Lie}\to \alg_{\P_{n+1}}\]
as
\[\alg_{\Lie}\xrightarrow{\triv} \bialg_{\Lie_{n-1}}\xrightarrow{\Omega} \alg_{\P_{n+1}}.\]
Therefore, the composite
\[\alg_{\Lie}\xrightarrow{\triv} \bialg_{\Lie_{n-1}}\xrightarrow{\Omega} \alg_{\P_{n+1}}\xrightarrow{\B} \bialg_{\Lie_{n-1}}\]
is weakly equivalent to $\triv\colon \alg_{\Lie}\rightarrow \bialg_{\Lie_{n-1}}$ and hence the remaining square commutes up to a weak equivalence.
\end{proof}

The functor $\overline{\Sym}\colon \ialg_{\Lie}\rightarrow \ialg_{\P_{n+1}}$ has a right adjoint $\forget\colon \ialg_{\P_{n+1}}\rightarrow \ialg_{\Lie}$ given by forgetting the commutative algebra structure. Since $\overline{\Sym}$ is symmetric monoidal, the right adjoint $\forget$ is lax symmetric monoidal and hence sends associative algebras to associative algebras. Therefore, the commutativity data of Proposition \ref{prop:LiePoissonSymcompatible} gives rise to a diagram of right adjoints
\[
\xymatrix{
\ialg_{\Lie} \ar^-{\sim}[r] \ar@{=>}[dr] & \ialg(\ialg_{\Lie}) \\
\ialg_{\P_{n+1}} \ar[r] \ar^{\forget}[u] & \ialg(\ialg_{\P_n}) \ar_{\forget}[u].
}
\]
which commutes up to a natural transformation.

\begin{prop}
The diagram of right adjoints
\[
\xymatrix{
\ialg_{\Lie} \ar^-{\sim}[r] & \ialg(\ialg_{\Lie}) \\
\ialg_{\P_{n+1}} \ar[r] \ar^{\forget}[u] & \ialg(\ialg_{\P_n}) \ar^{\forget}[u].
}
\]
commutes.
\label{prop:LiePoissonForgetcompatible}
\end{prop}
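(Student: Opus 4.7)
The commutative diagram of left adjoints in Proposition~\ref{prop:LiePoissonSymcompatible}, combined with the adjunctions $\overline{\Sym}\dashv \forget$ on both levels, yields a canonical mate natural transformation
\[
\alpha\colon F\circ\forget \Rightarrow \forget\circ G
\]
of functors $\ialg_{\P_{n+1}}\to \ialg(\ialg_{\Lie})$, constructed by inserting the unit of $\overline{\Sym}\dashv\forget$ on the target, applying the equivalence of Proposition~\ref{prop:LiePoissonSymcompatible}, and then the counit of $\overline{\Sym}\dashv\forget$ on the source. The content of the proposition is precisely that $\alpha$ is a natural equivalence.

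The plan is to verify this using the explicit bar-cobar presentation of the Poisson additivity. By Proposition~\ref{prop:bracepoissoncompatible} the functor $G$ is modeled, on the level of $1$-categories, by $A\mapsto \U(\B(A))\in \alg(\coalg_{\coP_n})$, where $\B(A)$ is the associated $(n-1)$-shifted Lie bialgebra and $\U$ is the universal enveloping functor. By Proposition~\ref{prop:LieadditivityU} the Lie additivity $F$ is likewise modeled by $\g\mapsto \U(\g)\in\alg(\coalg_{\coComm})$, with Theorem~\ref{thm:CartierMilnorMoore} providing an inverse via the functor of primitives. Since $\U$ is compatible with the relevant forgetful functors---forgetting the Lie cobracket on $\bialg_{\Lie_{n-1}}$ corresponds to forgetting the cobracket on $\alg(\coalg_{\coP_n})$---one identifies $\forget\circ G$ with the composite $A\mapsto \U(\B(A)_{\Lie})$, where $\B(A)_{\Lie}$ denotes the Lie algebra underlying the shifted Lie bialgebra $\B(A)$; dually, $F\circ\forget$ is identified with $A\mapsto \U(A[n])$.

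With this reformulation in place, the verification of $\alpha$ reduces to producing a natural weak equivalence of Lie algebras between $\B(A)_{\Lie}$ and $A[n]$ implementing the mate. This is the main technical obstacle, since at the chain level $\B(A)$ is a considerably larger complex than $A[n]$, so the required comparison must be implemented only after localization at quasi-isomorphisms. To handle it I would first check that both $F\circ\forget$ and $\forget\circ G$ preserve sifted colimits: for the former this is immediate from $F$ being an equivalence (Corollary~\ref{cor:Lieadditivity}) combined with Proposition~\ref{prop:forgetsifted}, while for the latter it follows by applying Proposition~\ref{prop:forgetsifted} to the coloured operad controlling associative algebras in $\P_n$-algebras and combining with the sifted-colimit preservation of the bar-cobar equivalence between $\P_{n+1}$-algebras and $(n-1)$-shifted Lie bialgebras. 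Since every $\P_{n+1}$-algebra is a sifted colimit of free Poisson algebras of the form $\overline{\Sym}(\g[-n])$, it then suffices to verify $\alpha$ on such objects, where Proposition~\ref{prop:LiePoissonSymcompatible} gives $G(\overline{\Sym}(\g[-n]))\simeq \overline{\Sym}(F(\g))$ and the desired equivalence can be read off from the unit and counit of $\overline{\Sym}\dashv \forget$ applied termwise to the Segal nerve of $F(\g)$, with the underlying Lie algebra computation of Proposition~\ref{prop:underlyingLietrivial} closing the argument.
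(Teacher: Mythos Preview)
Your reduction to a mate and the plan to verify it on $\overline{\Sym}$-free objects via sifted colimits is reasonable in outline, but the final step does not close. Evaluating the mate $\alpha$ at $A=\overline{\Sym}(\g[-n])$ and using Proposition~\ref{prop:LiePoissonSymcompatible} yields a comparison
\[
F\bigl(\forget\circ\overline{\Sym}(\g[-n])\bigr)\longrightarrow \forget\circ\overline{\Sym}\bigl(F(\g)\bigr),
\]
i.e.\ you must show that the Lie additivity $F$ intertwines the monad $T=\forget\circ\overline{\Sym}$. Since $F$ is equivalent to the loop functor $\Omega$ by Proposition~\ref{prop:LieadditivityOmega}, this amounts to asking that $T$ commute with $\Omega$, or concretely that $\overline{\Sym}$ take the iterated self-fibre products $0\times_\g\cdots\times_\g 0$ to the corresponding fibre products of $\overline{\Sym}(\g[-n])$ after forgetting to Lie algebras. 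That is a genuine limit-preservation statement for a left adjoint and is \emph{not} a formal consequence of the commutativity of left adjoints in Proposition~\ref{prop:LiePoissonSymcompatible}; neither the unit/counit manipulation you describe nor Proposition~\ref{prop:underlyingLietrivial} (which only identifies the underlying Lie algebra of $F(\g)$, not its full Segal structure relative to $T$) supplies it. So the argument, as written, has a gap precisely at the point where the real content lies.

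The paper bypasses this difficulty entirely. Instead of reducing to free objects, it uses that the forgetful functor $\ialg(\ialg_{\Lie})\to\ialg_{\Lie}$ (forgetting the associative structure) is conservative, so it suffices to check the square after composing with this forgetful functor. That composite is modelled $1$-categorically by the concrete bar--cobar functors, and the comparison becomes the assertion that for a $\Br_{\coP_n}$-algebra $A$ the natural map $\Omega_{\Lie}\B_{\Lie}A\to\Omega_{\P_n}\B_{\P_n}A$ is a quasi-isomorphism. This is immediate from the triangle in which both sides resolve $A$. The moral is that the conservativity reduction lets one work with a single object at a time and avoids any appeal to how the functors interact with free resolutions.
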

\begin{proof}
Since the functor $\forget\colon \ialg_{\P_n}\rightarrow \ialg_{\Lie}$ is lax monoidal, we have a commutative diagram
\[
\xymatrix{
\ialg(\ialg_{\Lie}) \ar[r] & \ialg_{\Lie} \\
\ialg(\ialg_{\P_n}) \ar[r] \ar^{\forget}[u] & \ialg_{\P_n} \ar^{\forget}[u]
}
\]
where the horizontal functors are given by forgetting the associative algebra structure.

Forgetting the algebra structure is conservative, so it will be enough to prove that the outer square in
\[
\xymatrix{
\alg_{\Br_{\coP_n}} \ar[r] \ar[d] & \alg_{\Lie} \ar[d] \\
\alg(\coalg_{\coP_n}) \ar[r] \ar[d] & \alg(\coalg_{\coComm}) \ar[d] \\
\coalg_{\coP_n} \ar[r] \ar[d] & \coalg_{\coComm} \ar[d] \\
\alg_{\P_n} \ar^{\forget}[r] & \alg_{\Lie}
}
\]
commutes up to a weak equivalence. That is, suppose $A$ is a $\Br_{\coP_n}$-algebra. Then we have show that the natural morphism of dg Lie algebras
\[\Omega_{\Lie}\B_{\Lie} A\to \Omega_{\P_n}\B_{\P_n} A\]
is a quasi-isomorphism, where $\Omega_{...}$ and $\B_{...}$ are the respective cobar and bar constructions. Let us note that $\B_{\Lie} A$ is the bar complex with respect to the $L_\infty$ structure on $A$ underlying the homotopy $\P_n$-structure on $A$.

But this is a quasi-isomorphism since the diagram
\[
\xymatrix{
\Omega_{\Lie}\B_{\Lie} A \ar[rr] \ar_{\sim}[dr] && \Omega_{\P_n}\B_{\P_n} A \ar^{\sim}[dl] \\
& A
}
\]
commutes.
\end{proof}

We will end this section by proving that the additivity functor for $\P_n$-algebras is an equivalence.

\begin{thm}
The additivity functor
\[\add\colon \ialg_{\P_{n+1}}\to \ialg(\ialg_{\P_n})\]
is an equivalence of symmetric monoidal $\infty$-categories.
\label{thm:Poissonadditivity}
\end{thm}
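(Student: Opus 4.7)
The plan is to apply the $\infty$-categorical Barr--Beck criterion (Proposition \ref{prop:luriebarrbeck}) to compare the additivity functor with the Lie-algebra additivity equivalence of Corollary \ref{cor:Lieadditivity}, using the functors that forget the commutative structure on both sides. The essential input is Propositions \ref{prop:LiePoissonSymcompatible} and \ref{prop:LiePoissonForgetcompatible}, which tell us that $\add$ commutes both with $\overline{\Sym}$ as a left adjoint and with the forgetful functor as a right adjoint, in a way compatible with the Lie additivity equivalence.

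Concretely, I would form the commutative triangle
\[
\xymatrix{
\ialg_{\P_{n+1}} \ar^-{\add}[rr] \ar_-{G_1}[dr] && \ialg(\ialg_{\P_n}) \ar^-{G_2}[dl] \\
& \ialg(\ialg_{\Lie}) &
}
\]
in which $G_2$ is induced by $\ialg(-)$ applied to the forgetful functor $\ialg_{\P_n} \to \ialg_{\Lie}$ (a lax monoidal right adjoint to the symmetric monoidal $\overline{\Sym}$), and $G_1$ is the composite of the forgetful functor $\ialg_{\P_{n+1}} \to \ialg_{\Lie}$ with the Lie additivity equivalence of Corollary \ref{cor:Lieadditivity}. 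Commutativity of the triangle is exactly Proposition \ref{prop:LiePoissonForgetcompatible}. The left adjoint $L_2$ of $G_2$ is obtained by applying $\ialg(-)$ to the symmetric monoidal functor $\overline{\Sym}\colon \ialg_{\Lie} \to \ialg_{\P_n}$, while the left adjoint $L_1$ of $G_1$ is the composite of the inverse of the Lie additivity equivalence with $\overline{\Sym}\colon \ialg_{\Lie} \to \ialg_{\P_{n+1}}$.

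Next I would verify the four hypotheses of Proposition \ref{prop:luriebarrbeck}. The left Beck--Chevalley identification $L_2 \simeq \add \circ L_1$ is, after unwinding, equivalent to commutativity of the $\overline{\Sym}$-square of Proposition \ref{prop:LiePoissonSymcompatible}. Preservation of sifted colimits by $G_1$ and $G_2$ reduces, using that $\ialg(\cC)$ of associative algebras in $\cC$ is the $\infty$-category of algebras over a colored dg operad, to Proposition \ref{prop:forgetsifted}. Conservativity is detected on the underlying chain complexes: a morphism in $\ialg(\ialg_{\P_n})$ is an equivalence iff its underlying morphism in $\ialg_{\P_n}$ is, and in turn iff it is a quasi-isomorphism of the underlying complexes, which is a property preserved under further forgetting to $\ialg_{\Lie}$. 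The existence of the symmetric monoidal enhancement of $\add$ is already established in Proposition \ref{prop:Poissonadditivitysm}, so Barr--Beck will upgrade the equivalence automatically to a symmetric monoidal one.

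The step I expect to require the most care is turning the two separate commutativity statements (Propositions \ref{prop:LiePoissonSymcompatible} and \ref{prop:LiePoissonForgetcompatible}) into an honest natural equivalence of adjoint pairs, so that the Beck--Chevalley transformation coming from the triangle really coincides with the one witnessed by the $\overline{\Sym}$-square. Once that bookkeeping is in place, the remaining four conditions are formal consequences of the chain-level analogues of the forgetful functors we already have in hand.
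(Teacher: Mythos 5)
Your proposal is correct and follows essentially the same route as the paper: the paper also applies Proposition \ref{prop:luriebarrbeck} to the square over $\ialg_{\Lie}\simeq\ialg(\ialg_{\Lie})$, using Proposition \ref{prop:LiePoissonForgetcompatible} for commutativity, Proposition \ref{prop:LiePoissonSymcompatible} for the Beck--Chevalley condition, Proposition \ref{prop:forgetsifted} for sifted colimits, conservativity of the forgetful functors, and Proposition \ref{prop:Poissonadditivitysm} for the symmetric monoidal enhancement.
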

\begin{proof}
By Proposition \ref{prop:Poissonadditivitysm} the additivity functor is symmetric monoidal, so we just have to show that it is an equivalence of $\infty$-categories.

Consider the diagram
\[
\xymatrix{
\ialg_{\P_{n+1}} \ar^{\forget}[d] \ar[r] & \ialg(\ialg_{\P_n}) \ar^{\forget}[d] \\
\ialg_{\Lie} \ar[r] & \ialg(\ialg_{\Lie})
}
\]

By Corollary \ref{cor:Lieadditivity} the bottom functor is an equivalence. The forgetful functor \[\forget\colon\ialg_{\P_{n+1}}\to \ialg_{\Lie}\] is conservative and preserves sifted colimits since they are created by the forgetful functor to chain complexes by Proposition \ref{prop:forgetsifted}. Similarly, the forgetful functor \[\forget\colon \ialg(\ialg_{\P_n})\to \ialg(\ialg_{\Lie})\] is conservative. Let $\cO = \P_n$ or $\Lie$. Sifted colimits in $\ialg_{\cO}$ are created by the forgetful functor to chain complexes and since sifted colimits in $\ialg(\ialg_{\cO})$ are created by the forgetful functor to $\ialg_{\cO}$, we conclude that sifted colimits in $\ialg(\ialg_{\cO})$ are created by the forgetful functor to chain complexes.

By Proposition \ref{prop:LiePoissonForgetcompatible} the diagram commutes and by Proposition \ref{prop:LiePoissonSymcompatible} the diagram of left adjoints commutes. Therefore, Proposition \ref{prop:luriebarrbeck} applies and the claim follows.
\end{proof}

A natural question is whether the Dunn--Lurie additivity functor (see \cite[Theorem 5.1.2.2]{HA})
\[\ialg_{\E_{n+1}}\stackrel{\sim}\to \ialg(\ialg_{\E_n})\]
is compatible with the Poisson additivity functor in the following sense. Suppose that $n\geq 2$. Then we have an equivalence of Hopf operads $\P_n\cong \E_n$ provided by the formality of the operad $\E_n$ which gives an equivalence of symmetric monoidal $\infty$-categories $\ialg_{\E_n}\cong \ialg_{\P_n}$.

\begin{conjecture}
Suppose $n\geq 2$. Then the diagram
\[
\xymatrix{
\ialg_{\P_{n+1}} \ar^-{\sim}[r] \ar^{\sim}[d] & \ialg(\ialg_{\P_n}) \ar^{\sim}[d] \\
\ialg_{\E_{n+1}} \ar^-{\sim}[r] & \ialg(\ialg_{\E_n})
}
\]
is commutative.
\end{conjecture}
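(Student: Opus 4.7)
My plan is to deduce the conjecture from a stronger operadic statement: both additivity functors arise, up to formality, from the same brace-bar construction of Section \ref{sect:bracebarcobar}. The Dunn--Lurie equivalence for $\ialg_{\E_{n+1}}\cong \ialg(\ialg_{\E_n})$ should admit a description analogous to the Poisson additivity functor, via a brace construction applied to the cooperad $\coE_n$. Indeed, for $n=1$ the first example on the list of Calaque--Willwacher gives $\Br_{\coAss}\{1\}\cong \E_2$, and more generally one expects a quasi-isomorphism $\Br_{\coE_n}\{n\}\simeq \E_{n+1}$ of Hopf operads compatible with the one on the Poisson side under formality. The strategy is then to factor both additivity functors through a common intermediate $\ialg_{\Br_{\cC}}\to \ialg(\ialg_\cO)$ and use naturality of the formality quasi-isomorphism.

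First I would recall Tamarkin's formality: for any Drinfeld associator there is a quasi-isomorphism of Hopf operads $\P_n\stackrel{\sim}\to \E_n$, which is an equivalence of Hopf cooperads $\coP_n\stackrel{\sim}\to \coE_n$ on the Koszul dual side. This induces equivalences of the brace operads $\Br_{\coP_n}\stackrel{\sim}\to \Br_{\coE_n}$, hence equivalences of the $\infty$-categories of algebras. The second step is to establish an $\E$-version of Proposition \ref{prop:bracepoissoncompatible}: the brace additivity functor $\ialg_{\Br_{\coE_n}}\to \ialg(\ialg_{\E_n})$ should agree with the Dunn--Lurie additivity after identifying $\Br_{\coE_n}\{n\}$ with $\E_{n+1}$. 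Granting these two facts, the conjecture becomes the statement that the diagram
\[
\xymatrix{
\ialg_{\Br_{\coP_n}} \ar[r] \ar[d]^{\sim} & \ialg(\ialg_{\Omega\coP_n}) \ar[d]^{\sim} \\
\ialg_{\Br_{\coE_n}} \ar[r] & \ialg(\ialg_{\Omega\coE_n})
}
\]
commutes, which is immediate from the fact that the brace construction is functorial in quasi-isomorphisms of Hopf cooperads and the bar functor is natural in the operad.

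The principal obstacle is to establish the identification of Dunn--Lurie additivity with a brace-bar construction for $\cC=\coE_n$. The original proof of \cite[Theorem 5.1.2.2]{HA} proceeds via $\infty$-operadic approximation arguments and nothing like an explicit bar construction appears. One approach would be to use the $\infty$-categorical Barr--Beck criterion of Proposition \ref{prop:luriebarrbeck} applied to the forgetful diagram
\[
\xymatrix{
\ialg_{\Br_{\coE_n}} \ar[dr] \ar[rr] && \ialg(\ialg_{\E_n}) \ar[dl] \\
& \iCh &
}
\]
in parallel with the proof of Theorem \ref{thm:Poissonadditivity}, checking that the left Beck--Chevalley condition holds and that the composite agrees on free algebras with the canonical description of the free $\E_{n+1}$-algebra as the geometric realization of configuration spaces. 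A slicker alternative would be to show that any symmetric monoidal functor $\ialg_{\E_{n+1}}\to \ialg(\ialg_{\E_n})$ that is an equivalence after forgetting down to $\iCh$ and preserves the unit is necessarily equivalent to Dunn--Lurie's, reducing the conjecture to a rigidity statement.

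A secondary subtlety is that formality depends on the choice of associator, so the conjectural diagram is not literally canonical but only canonical after such a choice; once the proof above is carried out, replacing the associator by a homotopic one induces a compatible homotopy between the vertical arrows, so the diagram commutes for any fixed choice. The hardest part, and the place where I expect real work, is in the brace description of Dunn--Lurie additivity: without it, one is comparing two constructions whose only shared structural feature is the common source and target, and no direct natural transformation is visible.
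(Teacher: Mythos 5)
You should first note that the paper does not prove this statement: it is stated as a \emph{conjecture}, and the author explicitly flags the core difficulty your proposal hinges on. In Section \ref{sect:bracebarcobar} the paper says that even in the simplest case $\cC=\coAss$ the brace additivity functor $\ialg_{\E_2}\cong\ialg_{\Br}\to\ialg(\ialg)$ is only \emph{expected} to coincide with the Dunn--Lurie equivalence. Your strategy rests on precisely this unproven ingredient, in the harder form of a brace-bar description of Dunn--Lurie additivity for all $n$, and you acknowledge but do not close the gap. Two concrete problems remain. First, the asserted quasi-isomorphism $\Br_{\coE_n}\{n\}\simeq\E_{n+1}$ is not available: the Calaque--Willwacher brace construction takes as input a (Hopf) Koszul dual cooperad of a dg operad, and for the chains operad of $\E_n$ the relevant cooperad is the full bar construction, for which no analogue of Proposition \ref{prop:Pnbrace} is known. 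Asserting this equivalence is asserting an instance of the very additivity problem the paper states it cannot solve in general.

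Second, even if you establish that $\ialg_{\Br_{\coE_n}}\to\ialg(\ialg_{\E_n})$ is an equivalence by running the Barr--Beck argument of Theorem \ref{thm:Poissonadditivity}, that only produces \emph{some} equivalence $\ialg_{\E_{n+1}}\to\ialg(\ialg_{\E_n})$; it does not identify it with Dunn--Lurie's. Checking agreement on free algebras is not enough to compare two functors out of $\ialg_{\E_{n+1}}$, and your fallback rigidity claim --- that any symmetric monoidal equivalence compatible with the forgetful functor to $\iCh$ must agree with Dunn--Lurie's --- is unjustified and almost certainly false as stated: the Grothendieck--Teichm\"{u}ller group acts on $\ialg_{\E_n}$ by symmetric monoidal autoequivalences covering the identity on $\iCh$, so compatibility with the underlying complex alone cannot pin the functor down. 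Relatedly, your closing remark that replacing the associator by a homotopic one ``induces a compatible homotopy between the vertical arrows'' presupposes exactly the commutativity being proved; tracking the associator dependence coherently through both formality equivalences and both additivity functors is where the real content of the conjecture lies, and the proposal does not engage with it.
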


\subsection{Additivity for unital Poisson algebras}
\label{sect:unitalpoissonadditivity}

Let us explain the necessary modifications one performs to construct the additivity functor for unital Poisson algebras. Recall from Section \ref{sect:examples} the bar-cobar adjunction for unital Poisson algebras
\[\adj{\Omega\colon\coalg_{\coP_n^\theta}}{\alg_{\P_n^{\un}}\colon \B},\]
where $\coP_n^\theta$ is the cooperad of curved $\P_n$-coalgebras. One similarly has a bar-cobar adjunction
\[\adj{\Omega\colon\coalg_{\coLie^\theta}}{\alg_{\Comm^{\un}}\colon \B},\]
where $\coLie^\theta$ is the cooperad of curved Lie coalgebras, that is, graded Lie coalgebras $\g$ together with a coderivation $\d$ of degree 1 satisfying
\begin{align*}
\d^2(x) &= \theta(x^{\delta}_{(1)})x^{\delta}_{(2)} \\
\theta(\d x) &= 0,
\end{align*}
where $\delta(x) = x^{\delta}_{(1)}\otimes x^{\delta}_{(2)}$.

The cobar construction is given by the same formula as in the non-curved case except that we add the curving to the differential. The bar construction of a unital commutative algebra $A$ is given by
\[\coLie(A[1]\oplus k[2])\]
with a bar differential and the curving which is given by projecting to $k[2]$.

\begin{defn}
An \defterm{$n$-shifted curved Lie bialgebra} is a curved Lie coalgebra $\g$ of degree $-n$ together with a degree $0$ Lie bracket satisfying the cocycle relation \eqref{eq:Liebialgcocycle}.
\end{defn}

We denote the category of $n$-shifted conilpotent curved Lie bialgebras by $\bialg^\theta_{\Lie_n}$ with weak equivalences created by the forgetful functor
\[\bialg^\theta_{\Lie_n}\to\coalg_{\coLie^\theta}.\]

The commutative bar-cobar adjunction
\[\adj{\Omega\colon\coalg_{\coLie^\theta}}{\alg_{\Comm^{\un}}\colon \B},\]
extends to a bar-cobar adjunction
\[\adj{\Omega\colon \bialg^\theta_{\Lie_{n-1}}}{\alg_{\P_{n+1}^{\un}}\colon \B}.\]
The only modification from the case of non-unital Poisson algebras is the formula for the Lie bracket. Suppose $A$ is a unital $\P_{n+1}$-algebra. Then as a graded vector space
\[\B(A) = \coLie(A[1]\oplus k[2])[n-1].\]
By the cocycle equation \eqref{eq:Liebialgcocycle}, the Lie bracket on $\coLie(A[1]\oplus k[2])$ is uniquely determined after projection to cogenerators and the morphism
\begin{align*}
\coLie(A[1]\oplus k[2])[n-1]\otimes \coLie(A[1]\oplus k[2])[n-1]&\to \coLie(A[1]\oplus k[2])[n-1]\\
&\to A[n]\oplus k[n+1]
\end{align*}
has the zero component in $k[n+1]$ and its $A[n]$ component is defined to be the bracket
\[A[n]\otimes A[n]\to A[n].\]

The universal enveloping algebra construction gives a functor
\[\U\colon \bialg^\theta_{\Lie_{n-1}}\to \alg(\coalg_{\coP_n^\theta}).\]
Therefore, we can define the additivity functor
\[\add_{\P_n^{\un}}\colon \ialg_{\P_{n+1}^{\un}}\to \ialg(\ialg_{\P_n^{\un}})\]
to be given by the composite
\begin{align*}
\alg_{\P_{n+1}^{\un}}[\qis^{-1}]&\to \bialg^\theta_{\Lie_{n_1}} [\kos^{-1}] \\
&\to \alg(\coalg_{\coP_n^\theta})[\kos^{-1}] \\
&\to \ialg(\coalg_{\coP_n^\theta}[\kos^{-1}]).
\end{align*}

The following statement is proved as for non-unital Poisson algebras by analyzing the forgetful functor to Lie algebras.
\begin{thm}
The additivity functor
\[\ialg_{\P_{n+1}^{\un}}\to \ialg(\ialg_{\P_n^{\un}})\]
is an equivalence of symmetric monoidal $\infty$-categories.
\end{thm}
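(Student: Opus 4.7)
The plan is to mirror, step by step, the proof of Theorem \ref{thm:Poissonadditivity}, carrying the curving through each intermediate construction. That is, I would factor $\add_{\P_n^{\un}}$ through the curved bar construction, the universal enveloping algebra $\U \colon \bialg^\theta_{\Lie_{n-1}} \to \alg(\coalg_{\coP_n^\theta})$, and the rectification map into $\ialg(\coalg_{\coP_n^\theta}[\kos^{-1}])$, and analyse each factor in parallel with the non-unital case.

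First I would endow $\add_{\P_n^{\un}}$ with a symmetric monoidal structure, exactly as in Proposition \ref{prop:Poissonadditivitysm}: the cobar functor $\Omega \colon \bialg^\theta_{\Lie_{n-1}} \to \alg_{\P_{n+1}^{\un}}$ is symmetric monoidal by inspection, so its right adjoint $\B$ is lax symmetric monoidal and becomes strictly so after localization by part (5) of Proposition \ref{prop:barlaxmonoidal} applied to $\coP_n^\theta$, and $\U$ is itself symmetric monoidal. Next I would set up the diagram
\[
\xymatrix{
\ialg_{\P_{n+1}^{\un}} \ar[r] \ar^{\forget}[d] & \ialg(\ialg_{\P_n^{\un}}) \ar^{\forget}[d] \\
\ialg_{\Lie} \ar^-{\add}[r] & \ialg(\ialg_{\Lie})
}
\]
in which the vertical functors forget the unital commutative multiplication, leaving behind the underlying Lie algebra (where the former unit becomes a central element). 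The left adjoints are $\g \mapsto \Sym(\g[-n])$, the unital symmetric algebra equipped with the Poisson bracket extended from $\g$ by the Leibniz rule. I would prove the unital analogs of Propositions \ref{prop:LiePoissonSymcompatible} and \ref{prop:LiePoissonForgetcompatible}: the former by factoring $\Sym$ through the trivial curved Lie bialgebra functor $\triv \colon \alg_{\Lie} \to \bialg^\theta_{\Lie_{n-1}}$ and using that $\U$ preserves triviality of the cobracket; the latter by observing that for every $\Br_{\coP_n^\theta}$-algebra $A$ the natural comparison between $\Omega_{\Lie^\theta} \B_{\Lie^\theta} A$ and $\Omega_{\P_n^{\un}} \B_{\P_n^{\un}} A$ fits in a commutative triangle over $A$ whose two legs are quasi-isomorphisms.

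With these in place, Proposition \ref{prop:luriebarrbeck} applies: the bottom arrow of the square is an equivalence by Corollary \ref{cor:Lieadditivity}, both forgetful functors are conservative (weak equivalences being detected on underlying complexes) and preserve sifted colimits (by Proposition \ref{prop:forgetsifted} and the fact that sifted colimits in $\ialg(\ialg_{\cO})$ are created by the forgetful functor to $\ialg_{\cO}$), and the left Beck--Chevalley condition is supplied by the left-adjoint diagram above. The main obstacle is keeping the curving under control: one must verify the curved analog of Proposition \ref{prop:koszulduality}, i.e.\ that the bar-cobar adjunction between $\alg_{\P_{n+1}^{\un}}$ and $\bialg^\theta_{\Lie_{n-1}}$ descends to an adjoint equivalence of $\infty$-categories. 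This reduces, via the forgetful functors to $\coalg_{\coLie^\theta}$ and $\alg_{\Comm^{\un}}$, to the curved Koszul duality already set up in Section \ref{sect:examples}, together with the observation that weak equivalences in $\bialg^\theta_{\Lie_{n-1}}$ are created after forgetting to $\coalg_{\coLie^\theta}$.
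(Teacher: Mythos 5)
Your proposal is correct and follows essentially the same route as the paper, which simply states that the unital case "is proved as for non-unital Poisson algebras by analyzing the forgetful functor to Lie algebras"; your write-up fleshes out exactly that strategy, carrying the curving through the bar-cobar adjunction with $\bialg^\theta_{\Lie_{n-1}}$, the symmetric monoidality via Proposition \ref{prop:barlaxmonoidal}(5), and the Barr--Beck argument of Proposition \ref{prop:luriebarrbeck} over the Lie additivity equivalence.
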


\section{Coisotropic structures}

In this section we show that two definitions of derived coisotropic structures given in \cite{CPTVV} and \cite{MS} are equivalent. Both definitions are given first in the affine setting and then extended in the same way to derived stacks, so it will be enough to prove equivalence on the affine level.

\subsection{Two definitions}

Let us introduce the following notations. Given a category $\cC$ we denote by $\Arr(\cC)$ the category of morphisms in $\cC$, i.e. the functor category $\Fun(\Delta^1, \cC)$. Given a symmetric monoidal category $\cC$ we denote by $\lmod(\cC)$ the category of pairs $(A, M)$, where $A$ is a unital associative algebra in $\cC$ and $M$ is a left $A$-module. Let us denote by $\ilmod(\cC)$ the same construction for a symmetric monoidal $\infty$-category $\cC$.

Introduce the notation
\[\ialg_{\P_{(n+1, n)}} = \ilmod(\ialg_{\P_n}).\]
We have a forgetful functor $\ialg_{\P_{(n+1, n)}}\rightarrow \Arr(\ialg_{\Comm})$ which sends a pair $(A, B)$ with the action map $A\otimes B\rightarrow B$ to the morphism of commutative algebras $A\rightarrow B$ given by the composite $A\stackrel{\id\otimes 1}\rightarrow A\otimes B\rightarrow B$.

The following definition of coisotropic structures is due to Calaque--Pantev--To\"{e}n--Vaqui\'{e}--Vezzosi \cite[Definition 3.4.3]{CPTVV}:
\begin{defn}
Let $f\colon A\rightarrow B$ be a morphism of commutative dg algebras. The \defterm{space of $n$-shifted coisotropic structures} $\Cois^{CPTVV}(f, n)$ is defined to be the fiber of
\[\ialg_{\P_{(n+1, n)}}\to \Arr(\ialg_{\Comm})\]
at the given morphism $f$.
\end{defn}

To relate this space of $n$-shifted coisotropic structures to the space of $n$-shifted Poisson structures on $A$, one has to use the Poisson additivity functor
\[\add\colon \ialg_{\P_{n+1}}\to \ialg(\ialg_{\P_n}).\]

A more explicit definition of the space of coisotropic structures was given in \cite{MS} and \cite{Sa} as follows. Let $B$ be a $\P_n$-algebra. We define its \defterm{strict center} to be the $\P_{n+1}$-algebra
\[\Z^{\str}(B) = \Hom_B(\Sym_B(\Omega^1_B[n]), B)\]
with the differential twisted by $[\pi_B, -]$. We refer to \cite[Section 1.1]{Sa} for explicit formulas for the $\P_{n+1}$-structure. One can also consider its center
\[\Z(B) = \Hom_k(\coP^{\cu}_n(B), B)[-n]\]
with the differential twisted by the $\P_n$-structure on $B$. By results of \cite{CW}, $\Z(B)$ is a $\Br_{\coP_n}\{n\}$-algebra and hence in particular a homotopy $\P_{n+1}$-algebra. Moreover, the natural inclusion
\[\Z^{\str}(B)\rightarrow \Z(B)\]
is compatible with the homotopy $\P_{n+1}$ structures on both sides and is a quasi-isomorphism if $B$ is cofibrant as a commutative algebra.

Let $\P_{[n+1, n]}$ be the colored operad whose algebras consist of a pair $(A, B, f)$ where $A$ is a $\P_{n+1}$-algebra, $B$ is a $\P_n$-algebra and $f\colon A\rightarrow \Z^{\str}(B)$ is a $\P_{n+1}$-morphism. The projection $\Z^{\str}(B)\rightarrow B$ is a morphism of commutative algebras, so we get a natural forgetful functor
\[\alg_{\P_{[n+1, n]}}\rightarrow \Arr(\alg_{\Comm}).\]

\begin{defn}
Let $f\colon A\rightarrow B$ be a morphism of commutative dg algebras. The \defterm{space of $n$-shifted coisotropic structures} $\Cois^{MS}(f, n)$ is defined to be the fiber of
\[\ialg_{\P_{[n+1, n]}}\to \Arr(\ialg_{\Comm})\]
at the given morphism $f$.
\end{defn}

Our goal will be to construct an equivalence of $\infty$-categories $\ialg_{\P_{[n+1, n]}}\rightarrow \ialg_{\P_{(n+1, n)}}$ which is compatible with the forgetful functor to $\Arr(\ialg_{\Comm})$ which will show that the spaces $\Cois^{MS}(f, n)$ and $\Cois^{CPTVV}(f, n)$ are equivalent. We will construct the equivalence as a relative version of the additivity functor \eqref{eq:additivityfunctor}.

\subsection{Relative additivity for Lie algebras}

We begin with the relative analog of the additivity functor for Lie algebras.

Let us introduce a Swiss-cheese operad of Lie algebras. Given a dg Lie algebra $\h$ the Chevalley--Eilenberg complex $\C^\bullet(\h, \h)[1]$ carries a convolution Lie bracket. We let $\Lie_{[1, 0]}$ be the colored operad whose algebras consist of a pair of dg Lie algebras $(\g, \h)$ together with a map of dg Lie algebras $\g\rightarrow \C^\bullet(\h, \h)[1]$.

\begin{remark}
Following \cite[Section 3.3]{MS}, given the morphism of operads
\[\Omega \coComm\{1\}\rightarrow \Lie\rightarrow \Br_{\coComm},\]
one can consider the colored operad $\mathrm{SC}(\coComm, \coComm)$ whose algebras are given by a pair $(\g, \h)$ of $L_\infty$ algebras together with an $\infty$-morphism $\g\rightarrow \C^\bullet(\h, \h)[1]$. The colored operad $\Lie_{[1, 0]}$ is given by a quotient where we declare both $\g$ and $\h$ to be strict Lie algebras and the morphism $\g\rightarrow \C^\bullet(\h, \h)[1]$ to be a strict morphism.
\end{remark}

Consider the morphism $\h\rightarrow \Der(\h)$ given by the adjoint action and denote by $\widetilde{\Der}(\h)$ its cone. Note that we have an obvious inclusion $\widetilde{\Der}(\h)\subset \C^\bullet(\h, \h)[1]$. We denote by $\Lie^{\str}_{[1, 0]}$ the quotient operad of $\Lie_{[1, 0]}$ where we set all components of the map $\g\rightarrow \C^\bullet(\h, \h)[1]$ having $\h$-arity at least $2$ to be zero. Thus, a $\Lie^{\str}_{[1, 0]}$-algebra is a pair $(\g, \h)$ of dg Lie algebras together with a map of Lie algebras $\g\rightarrow \widetilde{\Der}(\h)$.

Given a pair $(\g, \h)\in \alg_{\Lie^{\str}_{[1, 0]}}$ we can construct a dg Lie algebra $\k$ as follows. As a graded vector space we define $\k = \g\oplus \h$. The differential on $\k$ is the sum of the differentials on $\g$ and $\h$ and the differential $\g\rightarrow \h$ coming from the composite
\[\g\to \C^\bullet(\h, \h)[1]\to \h[1].\]
The Lie bracket on $\k$ is the sum of Lie brackets on $\g$ and $\h$ and the action map of $\g$ on $\h$ given by the map $\g\rightarrow \Der(\h)$. In this way we see that a pair $(\g, \h)\in \alg_{\Lie^{\str}_{[1, 0]}}$ is the same as a dg Lie algebra extension
\[0\to \h\to \k\to \g\to 0.\]
One can similarly construct an $L_\infty$ extension from the data of a general object of $\alg_{\Lie_{[1, 0]}}$. Since every $L_\infty$ algebra is quasi-isomorphic to a dg Lie algebra, the following statement should be obvious.

\begin{prop}
The forgetful functor
\[\alg_{\Lie^{\str}_{[1, 0]}}\to \alg_{\Lie_{[1, 0]}}\]
induces an equivalence on the underlying $\infty$-categories.
\end{prop}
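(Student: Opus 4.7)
The plan is to reformulate both sides as $\infty$-categories of Lie algebra extensions and then invoke rectification of $L_\infty$ algebras to dg Lie algebras. The construction preceding the proposition identifies $\alg_{\Lie^{\str}_{[1,0]}}$ with the category of strict dg Lie algebra extensions $0 \to \h \to \k \to \g \to 0$ equipped with a graded splitting $\k \cong \g \oplus \h$. A parallel analysis for $\Lie_{[1,0]}$-algebras uses the fact that a map of dg Lie algebras $\g \to \C^\bullet(\h, \h)[1]$ is equivalent to an $L_\infty$-structure on the graded space $\g \oplus \h$ extending the given strict brackets on $\g$ and $\h$ such that the projection onto $\g$ is a strict $L_\infty$-morphism. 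This identifies $\alg_{\Lie_{[1,0]}}$ with the category $\mathcal{E}^{L_\infty}$ of such split $L_\infty$-extensions, and the forgetful functor of the proposition becomes the obvious inclusion $\mathcal{E}^{\str} \hookrightarrow \mathcal{E}^{L_\infty}$. Weak equivalences on both sides are created by the middle term $\k$.

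With this reformulation in hand, the plan is to apply Proposition \ref{prop:luriebarrbeck} to the commutative triangle
\[
\xymatrix{
\alg_{\Lie^{\str}_{[1,0]}}[\qis^{-1}] \ar[dr] \ar[rr] && \alg_{\Lie_{[1,0]}}[\qis^{-1}] \ar[dl] \\
& \ialg_{\Lie} \times \ialg_{\Lie} &
}
\]
whose diagonal functors send $(\g,\h,\k)$ to $(\g,\h)$. The diagonals are conservative (weak equivalences are created by them), preserve sifted colimits by Proposition \ref{prop:forgetsifted} applied to the underlying colored operads, and admit left adjoints given by the universal strict and $L_\infty$-extension constructions respectively. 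Conditions (1), (3), and (4) of Proposition \ref{prop:luriebarrbeck} are therefore satisfied, and the proof reduces to the left Beck--Chevalley condition.

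The crux, and main obstacle, is this Beck--Chevalley comparison: for any pair of cofibrant dg Lie algebras $(\g,\h)$, the natural map from the universal $L_\infty$-extension to the universal strict extension must induce a quasi-isomorphism on the underlying pair of dg Lie algebras. This is essentially a rectification statement in the slice category over $\g$. Hinich's Quillen equivalence between dg Lie algebras and $L_\infty$ algebras, applied to the model category of (strict or $L_\infty$) dg Lie algebras equipped with a strict surjection onto $\g$, shows that any $L_\infty$-action of $\g$ on $\h$ is equivalent to a strict action on a cofibrant replacement of $\h$. The subtlety is that the cofibrant replacement of $\k_\infty \to \g$ in $L_\infty$-algebras-over-$\g$ must have strict fiber over $\g$ weakly equivalent to $\h$; this is ensured because $\g$ is itself strict and a semi-free extension of $\g$ by an $L_\infty$-algebra $\h'$ has underlying graded space $\g \oplus \h'$ with kernel the strict Lie algebra $\h'$. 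Once this rectification is verified, Proposition \ref{prop:luriebarrbeck} applies and delivers the stated equivalence.
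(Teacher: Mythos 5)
Your reduction to Proposition \ref{prop:luriebarrbeck} is set up reasonably (conservativity, preservation of sifted colimits, and existence of left adjoints all check out), but the step you yourself single out as the crux --- the left Beck--Chevalley condition --- is not actually verified. What you offer in its place is a rectification claim: ``any $L_\infty$-action of $\g$ on $\h$ is equivalent to a strict action on a cofibrant replacement of $\h$.'' That is an essential-surjectivity statement about the comparison functor, not a computation of the natural map $G_2^L(\g,\h)\to F G_1^L(\g,\h)$ between the two \emph{free} objects, which is what Beck--Chevalley requires. Moreover, the rectification you invoke is not an instance of Hinich's theorem: his Quillen equivalence concerns the resolution $\Omega\coComm\{1\}\to \Lie$, whereas in a $\Lie_{[1,0]}$-algebra both $\g$ and $\h$ are already strict dg Lie algebras and the only homotopical relaxation is that the action map takes values in all of $\C^\bullet(\h,\h)[1]$ rather than in $\widetilde{\Der}(\h)$. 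The projection $\Lie_{[1,0]}\to\Lie^{\str}_{[1,0]}$ is not a bar-cobar resolution, and the statement that actions valued in $\C^\bullet(\h,\h)[1]$ strictify to actions valued in $\widetilde{\Der}(\h)$ is essentially the proposition itself, so appealing to it at this point is circular.

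The missing content is exactly the homological computation the paper supplies, and the paper bypasses Barr--Beck entirely: by Proposition \ref{prop:operadforgetful} it suffices to show that the projection of colored operads $\Lie_{[1,0]}\to\Lie^{\str}_{[1,0]}$ is a quasi-isomorphism in each biarity. One identifies $\Lie_{[1,0]}(\cA^{\otimes 1},\cB^{\otimes m})$ with the coefficient of $V^{\otimes m}$ in the Chevalley--Eilenberg homology $\C_\bullet(\Lie(V),\Lie(V))$ of a free Lie algebra, and $\Lie^{\str}_{[1,0]}(\cA^{\otimes 1},\cB^{\otimes m})$ with its weight-$\leq 1$ truncation; the quasi-isomorphism then follows from the two-term free resolution $\T(V)\otimes V\to \T(V)$ of $k$ over the tensor algebra. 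Some computation of this kind is unavoidable on your route as well --- it is precisely what a verification of the Beck--Chevalley condition for the free objects would amount to --- so you should either carry it out explicitly or adopt the paper's more direct argument via the operad-level quasi-isomorphism.
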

\begin{proof}
By Proposition \ref{prop:operadforgetful} it is enough to show that the projection $\Lie_{[1, 0]}\rightarrow \Lie^{\str}_{[1, 0]}$ is a quasi-isomorphism.

Let $\coComm^{\cu}$ be the cooperad of counital cocommutative coalgebras. One can identify the underling graded colored symmetric sequence $\Lie_{[1, 0]}$ with $\Lie\circ (\coComm^{\cu}\circ\Lie\oplus \bu)$ so that $\Lie_{[1, 0]}(\cA^{\otimes n}, \cB^{\otimes m})$ has $n$ operations in $\coComm^{\cu}$ and $m$ operations in $\Lie$ or $\bu$. The projection \[\Lie_{[1, 0]}(\cA^{\otimes 0}, \cB^{\otimes m})\rightarrow \Lie^{\str}(\cA^{\otimes 0}, \cB^{\otimes m})\] is an isomorphism. To simplify the notation, we prove that the projection
\begin{equation}
\Lie_{[1, 0]}(\cA^{\otimes 1}, \cB^{\otimes m})\rightarrow \Lie^{\str}(\cA^{\otimes 1}, \cB^{\otimes m})
\label{eq:liestrprojection}
\end{equation}
is a quasi-isomorphism since the case of higher $n$ is handled similarly. Thus, we can identify the underlying graded symmetric sequence $\Lie_{[1, 0]}(\cA^{\otimes 1}, \cB^{\otimes -})$ wth $\Lie\circ_{(1)} (\coComm^{\cu}\circ \Lie)$, where $\circ_{(1)}$ is the infinitesimal composite (see \cite[Section 6.1]{LV}).

Let $V$ be a complex and consider $\Lie(V)$, the free Lie algebra on $V$, and its homology $\C_\bullet(\Lie(V), \Lie(V))$. Let $\C_{\leq 1}(\Lie(V), \Lie(V))$ be the quotient of $\C_\bullet(\Lie(V), \Lie(V))$ where we consider only components of weight $\leq 1$ and mod out by the image of the Chevalley--Eilenberg differential from weight $2$ to weight $1$. The projection
\[\C_\bullet(\Lie(V), \Lie(V))\rightarrow \C_{\leq 1}(\Lie(V), \Lie(V))\]
is a quasi-isomorphism which can be seen as follows. The left-hand side by definition computes the derived tensor product $k\otimes^{\bL}_{\T(V)} \Lie(V)$. But $k$ has a two-term free resolution
\[(\T(V)\otimes V\rightarrow \T(V))\stackrel{\sim}\rightarrow k\]
as a $\T(V)$-module and computing the derived tensor product using this resolution one exactly obtains $\C_{\leq 1}(\Lie(V), \Lie(V))$.

To conclude the proof, observe that the coefficient of $V^{\otimes m}$ in $\C_\bullet(\Lie(V), \Lie(V))$ is isomorphic as a complex to $\Lie_{[1, 0]}(\cA^{\otimes 1}, \cB^{\otimes m})$ while its coefficient in $\C_{\leq 1}(\Lie(V), \Lie(V))$ is isomorphic to $\Lie^{\str}(\cA^{\otimes 1}, \cB^{\otimes m})$.
\end{proof}

Now we are going to introduce the bar construction
\[\B\colon \alg_{\Lie_{[1, 0]}}\to \lmod(\coalg_{\coComm})\]
Consider a pair $(\g, \h)\in \alg_{\Lie_{[1, 0]}}$. We send it to the pair $(\U(\g), \C_\bullet(\h))$ of a cocommutative bialgebra and a cocommutative coalgebra. The action map
\[\U\g\otimes \C_\bullet(\h)\to \C_\bullet(\h)\]
is constructed as follows. Since $\U(\g)$ is generated by $\g$, it is enough to specify the action $\g\otimes \C_\bullet(\h)\rightarrow \C_\bullet(\h)$ that we denote by $x. c$ for $x\in\g$ and $c\in\C_\bullet(\h)$ satisfying the equations
\begin{align*}
(x.c)_{(1)}\otimes (x.c)_{(2)} &= (x.c_{(1)})\otimes c_{(2)} + (-1)^{|x||c_{(1)}|}c_{(1)}\otimes (x.c_{(2)}),\qquad x\in\g, c\in\C_\bullet(\h) \\
[x,y].c &= x.(y.c) - (-1)^{|x||y|}y.(x.c) \qquad x,y\in\g, c\in\C_\bullet(\h)
\end{align*}

Since $\C_\bullet(\h)$ is cofree, by the first equation it is enough to specify the map $\g\otimes \C_\bullet(\h)\rightarrow \h[1]$ which we define to be adjoint to the given map $\g\rightarrow \C^\bullet(\h, \h)[1]$. It is then easy to see that since the map $\g\rightarrow \C^\bullet(\h, \h)[1]$ is compatible with Lie brackets, the second equation is satisfied and since it is compatible with the differentials, so is the map $\g\otimes \C_\bullet(\h)\rightarrow \C_\bullet(\h)$. This defines the functor
\[\B\colon \alg_{\Lie_{[1, 0]}}\to \lmod(\coalg_{\coComm}).\]

We can also introduce the cobar construction
\[\Omega\colon \lmod(\coalg_{\coComm})\to \alg_{\Lie^{\str}_{[1, 0]}}.\]
Consider an element $(A, C)\in \lmod(\coalg_{\coComm})$ where $A$ is an algebra and $C$ is an $A$-module. By Theorem \ref{thm:CartierMilnorMoore} we can identify $A\cong \U(\g)$ for the Lie algebra $\g$ of primitive elements. We send $(\U(\g), C)$ to the pair $(\g, \h=\Omega C)$, where $\Omega C$ is the Harrison complex $\Lie(\overline{C}[-1])$. Let us denote the action map $\U\g\otimes C\rightarrow C$ by $x.c$ for $x\in\g$ and $c\in C$. The morphism $\g\to\h[1]$ is defined by the composite
\[\g\to \overline{C}[-1]\to \Lie(\overline{C}[-1])=\h\]
where the first map is given by the action map $x.1$. Since $\h$ is semi-free, the morphism $\g\rightarrow \Der(\h)$ is uniquely determined by the map
\[\g\otimes \overline{C}[-1]\to \overline{C}[-1]\to \Lie(\overline{C}[-1]),\]
where the first map is the action of $\g$ on $C$. The fact that thus constructed morphism $\g\rightarrow \widetilde{\Der}(\h)$ is a morphism of Lie algebras follows from the associativity of the action map $\U\g\otimes C\rightarrow C$. The compatibility of the morphism $\g\rightarrow \widetilde{\Der}(\h)$ with the differential follows from the compatibility of the action map $\U(\g)\otimes C\rightarrow C$ with coproducts. This defines the functor
\[\Omega\colon \lmod(\coalg_{\coComm})\to \alg_{\Lie^{\str}_{[1, 0]}}.\]

Note that in this way we obtain an adjunction
\[\adj{\Omega\colon \lmod(\coalg_{\coComm})}{\alg^{\str}_{\Lie_{[1, 0]}}\colon \B}.\]
Indeed, the counit and unit morphisms
\[\Omega\B(\g, \h)\to (\g, \h)\qquad (A, C)\rightarrow \B\Omega(A, C)\]
are defined to be the identities in the first slot and the counit and unit of the usual bar-cobar adjunction in the second slot.

\begin{prop}
The adjunction
\[\adj{\Omega\colon \lmod(\coalg_{\coComm})}{\alg_{\Lie^{\str}_{[1, 0]}}\colon \B}\]
induces an adjoint equivalence
\[\adj{\lmod(\coalg_{\coComm})[\kos^{-1}]}{\alg_{\Lie^{\str}_{[1, 0]}}[\qis^{-1}]}\]
on the underlying $\infty$-categories.
\label{prop:relLieadditivityB}
\end{prop}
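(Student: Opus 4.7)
The plan is to follow the template of Proposition \ref{prop:koszulduality}: verify that both functors preserve weak equivalences, then that the unit and counit of the adjunction are weak equivalences. The key observation is that the adjunction is essentially componentwise: on the Lie algebra $\g$ slot it is the Cartier--Milnor--Moore equivalence of Theorem \ref{thm:CartierMilnorMoore}, while on the Lie algebra $\h$ slot it is the standard Lie-cocommutative bar-cobar Koszul duality of Proposition \ref{prop:koszulduality}. Both are already known to induce equivalences on the underlying $\infty$-categories, so the new content is verifying their compatibility with the extra datum of the action map $\g\to\widetilde{\Der}(\h)$.

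First I would declare weak equivalences in $\lmod(\coalg_{\coComm})$ to be pairs of morphisms that are $\kos$-equivalences in both slots, while weak equivalences in $\alg_{\Lie^{\str}_{[1,0]}}$ are by construction those whose components are quasi-isomorphisms of dg Lie algebras. Then I would check that $\B$ preserves weak equivalences: by the PBW identification $\U(\g)\cong \Sym(\g)$ as cocommutative coalgebras (used in the proof of the lemma after Theorem \ref{thm:CartierMilnorMoore}), $\U$ sends quasi-isomorphisms of Lie algebras to $\kos$-equivalences, and the Harrison complex $\C_\bullet$ preserves weak equivalences by Proposition \ref{prop:koszulduality}. That $\Omega$ preserves weak equivalences is immediate: on the module slot $\Omega$ creates weak equivalences by definition of $\kos$, and on the algebra slot the primitives functor preserves weak equivalences as the inverse of an equivalence of categories.

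Next I would verify that the unit and counit of the adjunction are weak equivalences. By the definitions of $\B$ and $\Omega$ given before the proposition, both natural transformations act as the identity on the $\g$-slot (using that the primitives of $\U(\g)$ recover $\g$ via Cartier--Milnor--Moore) and reduce to the standard Chevalley--Eilenberg bar-cobar unit (respectively counit) on the $\h$-slot, which is a $\kos$-equivalence (respectively a quasi-isomorphism) by Proposition \ref{prop:koszulduality}. Combined with the previous step, the adjunction then descends to the adjoint equivalence on localizations via the universal property of localization used in the proof of Proposition \ref{prop:koszulduality}.

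The main obstacle is checking compatibility with the action maps: the unit $(A,C)\to \B\Omega(A,C)$ must intertwine the $A$-action on $C$ with the $\U(\g)$-action on $\C_\bullet(\Omega C)$ for $\g=\mathrm{Prim}(A)$, and dually for the counit. Since the $\g$-action on $\C_\bullet(\h)$ in the definition of $\B$ is the one uniquely specified by the map $\g\to \C^\bullet(\h,\h)[1]$ via the cofreeness of $\C_\bullet(\h)$ as a cocommutative coalgebra, and the $\g$-action on $\Omega C$ in the definition of $\Omega$ is constructed by extending the action map $\g\otimes\overline{C}[-1]\to\overline{C}[-1]$ using the universal property of the free Lie algebra, these compatibilities unwind to diagram chases on generators that use nothing beyond the adjunctions $\U\dashv \forget$ and $\Lie\dashv \forget$.
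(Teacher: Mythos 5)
Your proposal is correct and follows essentially the same route as the paper: the paper also reduces to the componentwise statement, observing that the forgetful functors to $\alg(\coalg_{\coComm})\times\coalg_{\coComm}$ and $\alg_{\Lie}\times\alg_{\Lie}$ reflect weak equivalences and that the bottom adjunction is the product of the Cartier--Milnor--Moore equivalence and the Lie/cocommutative Koszul duality of Proposition \ref{prop:koszulduality}. Your extra paragraph on compatibility with the action maps is material the paper disposes of when constructing the adjunction itself, just before the proposition (the unit and counit are defined to be the identity in the first slot and the usual bar--cobar unit and counit in the second), and your only slip is terminological: $\C_\bullet(\h)$ is the Chevalley--Eilenberg, not the Harrison, complex.
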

\begin{proof}
We have a commutative diagram
\[
\xymatrix{
\lmod(\coalg_{\coComm}) \ar[d] \ar@<.5ex>[r] & \alg_{\Lie^{\str}_{[1, 0]}} \ar[d] \ar@<.5ex>[l] \\
\alg(\coalg_{\coComm})\times \coalg_{\coComm} \ar@<.5ex>[r] & \alg_{\Lie}\times \alg_{\Lie} \ar@<.5ex>[l]
}
\]
where the vertical functors are the obvious forgetful functors. Since they reflect weak equivalences, it is enough to show the bottom adjunction induces an equivalence after localization. Indeed,
\[\xymatrix{
\alg(\coalg_{\coComm}) \ar@<.5ex>[r] & \alg_{\Lie} \ar@<.5ex>^-{\U}[l]
}\]
is an equivalence by Theorem \ref{thm:CartierMilnorMoore} and
\[\adj{\coalg_{\coComm}}{\alg_{\Lie}}\]
induces an $\infty$-categorical equivalence by Proposition \ref{prop:koszulduality}.
\end{proof}

The composite functor
\[\alg_{\Lie_{[1, 0]}} [\qis^{-1}]\stackrel{\B}\to \lmod(\coalg_{\coComm})[\kos^{-1}]\to \ilmod(\coalg_{\coComm}[\kos^{-1}])\]
defines a relative additivity functor for Lie algebras:
\[\add\colon \ialg_{\Lie_{[1, 0]}}\to \ilmod(\ialg_{\Lie}).\]

\begin{thm}
The additivity functor
\[\ialg_{\Lie_{[1, 0]}}\to \ilmod(\ialg_{\Lie})\]
is an equivalence of $\infty$-categories.
\label{thm:relLieadditivity}
\end{thm}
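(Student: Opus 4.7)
The strategy is the relative analog of the argument for Theorem \ref{thm:Poissonadditivity}: apply the Beck--Chevalley criterion of Proposition \ref{prop:luriebarrbeck} to the commutative triangle
\[\xymatrix{
\ialg_{\Lie_{[1, 0]}} \ar[dr]_{G_1} \ar[rr]^-{\add} && \ilmod(\ialg_{\Lie}) \ar[dl]^{G_2} \\
& \ialg_{\Lie} \times \ialg_{\Lie} &
}\]
in which $G_1$ sends $(\g,\h,\alpha)$ to the underlying pair of Lie algebras and $G_2$ sends $(A,M)$ to the underlying Lie algebras of $A$ and $M$, forgetting both the associative multiplication on $A$ and the $A$-action on $M$. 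Commutativity of the triangle follows from the formula $\B(\g,\h)=(\U(\g),\C_\bullet(\h))$ of Proposition \ref{prop:relLieadditivityB} combined with Proposition \ref{prop:underlyingLietrivial}: under the identification $\ialg(\ialg_{\Lie})\simeq\ialg_{\Lie}$ of Corollary \ref{cor:Lieadditivity}, the underlying Lie algebra of $\U(\g)$ is $\g$, and $\C_\bullet(\h)$ recovers $\h$ as a Lie algebra via the Harrison cobar complex.

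The conservativity and sifted colimit hypotheses are routine. For $G_1$, Proposition \ref{prop:forgetsifted} applied to the colored operad $\Lie_{[1, 0]}$ implies that the forgetful functor $\ialg_{\Lie_{[1,0]}}\to\iCh\times\iCh$ is conservative and preserves sifted colimits, hence the same holds for $G_1$. For $G_2$, the monoidal structure on $\ialg_{\Lie}$ is Cartesian, so the forgetful functors $\ialg(\ialg_{\Lie})\to\ialg_{\Lie}$ and $\lmod_A(\ialg_{\Lie})\to\ialg_{\Lie}$ are conservative and create sifted colimits (since Cartesian products preserve them).

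The core of the proof is the Beck--Chevalley condition, which demands a natural equivalence $\add\circ L_1\simeq L_2$ between the left adjoints. Here $L_1$ is the free $\Lie_{[1, 0]}$-algebra on a pair of Lie algebras (computed via the colored operad), while $L_2(\g,\h)$ is the pair consisting of the free $E_1$-algebra on $\g$ in Cartesian $\ialg_{\Lie}$ together with the free module on $\h$. On the first coordinate the desired equivalence reduces to the absolute Lie additivity of Corollary \ref{cor:Lieadditivity}, which matches $\U$ of the free Lie algebra component of $L_1(\g,\h)$ with the free $E_1$-algebra in Cartesian $\ialg_{\Lie}$. On the second coordinate the claim is that the $\U(\g)$-module coalgebra produced by the $\coComm$-bar complex of the free $\g$-action on $\h$ realizes the free $\U(\g)$-module on $\h$ in Cartesian $\ialg_{\Lie}$.

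I expect the Beck--Chevalley verification to be the main obstacle. It is essentially a relative rectification statement --- passing from the strict free module in $\lmod(\coalg_{\coComm})$ provided by Proposition \ref{prop:relLieadditivityB} to the $\infty$-categorical free module in Cartesian $\ialg_{\Lie}$ --- and should be handled by extending the filtration argument of Proposition \ref{prop:barlaxmonoidal} to the relative setting. An alternative and conceptually cleaner route is to argue fiberwise over $\ialg_{\Lie}\simeq\ialg(\ialg_{\Lie})$: for each fixed $\g$ one must identify the $\infty$-category of Lie algebras equipped with a homotopy $\g$-action by derivations with the $\infty$-category of $\U(\g)$-module Lie algebras, which is the module-theoretic analog of the Cartier--Milnor--Moore theorem and can be extracted from Proposition \ref{prop:relLieadditivityB} together with the conservativity and colimit-preservation properties established above.
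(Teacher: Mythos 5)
Your triangle does not commute, and this is the central flaw. You take $G_2$ to forget both the associative multiplication on $A$ and the $A$-action on $M$, landing in $\ialg_{\Lie}\times\ialg_{\Lie}$, and you assert that ``the underlying Lie algebra of $\U(\g)$ is $\g$.'' Proposition \ref{prop:underlyingLietrivial} says the opposite: the underlying Lie algebra of $\add(\g)=\U(\g)$ is the \emph{trivial} Lie algebra on $\g[-1]$. You are conflating the forgetful functor $\ialg(\ialg_{\Lie})\to\ialg_{\Lie}$ with the classifying-space functor $\B$ (the inverse of the additivity equivalence of Corollary \ref{cor:Lieadditivity}); only the latter returns $\g$. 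With the honest forgetful $G_2$, the first coordinate of $G_2\circ\add(\g,\h,\alpha)$ is $\g[-1]$ with zero bracket, so the triangle with $G_1(\g,\h,\alpha)=(\g,\h)$ fails to commute, and repairing it forces you either to build the shift-and-trivialize into $G_1$ (destroying the description of its left adjoint as operadic induction) or to keep the $E_1$-structure in the base. Even granting a commuting triangle, your Beck--Chevalley condition involves the free associative algebra on a Lie algebra in the Cartesian monoidal $\infty$-category $\ialg_{\Lie}$; since the Cartesian product there does not preserve colimits in each variable, this free algebra is not given by the tensor-algebra formula and its existence does not follow from the standard criterion. You acknowledge this step as ``the main obstacle'' and do not carry it out, so the key verification is missing.

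The paper's proof avoids both problems. It first factors $\add$ as the strict bar equivalence $\B\colon\ialg_{\Lie_{[1,0]}}\to\lmod(\coalg_{\coComm})[\kos^{-1}]$ of Proposition \ref{prop:relLieadditivityB} followed by the rectification functor $L_1\colon\lmod(\coalg_{\coComm})[\kos^{-1}]\to\ilmod(\coalg_{\coComm}[\kos^{-1}])$, so only $L_1$ remains to be checked. It then applies Proposition \ref{prop:luriebarrbeck} over the base $\ialg(\coalg_{\coComm}[\kos^{-1}])\times\coalg_{\coComm}[\kos^{-1}]$, which \emph{remembers} the algebra and forgets only the module structure. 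There the left adjoint on both sides is the free module functor $(A,V)\mapsto(A,A\otimes V)$, identified $\infty$-categorically by \cite[Corollary 4.2.4.4]{HA}, so the Beck--Chevalley condition holds essentially by inspection. If you want to retain your fiberwise picture, rework it over this larger base rather than over $\ialg_{\Lie}\times\ialg_{\Lie}$.
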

\begin{proof}
Consider a commutative diagram
\[
\xymatrix{
\ialg_{\Lie_{[1, 0]}} \ar^{\B}[d] & \\
\lmod(\coalg_{\coComm})[\kos^{-1}] \ar^-{G_1}[r] \ar^{L_1}[d] & \alg(\coalg_{\coComm})[\kos^{-1}]\times \coalg_{\coComm}[\kos^{-1}] \ar^{L_2}[d] \\
\ilmod(\coalg_{\coComm}[\kos^{-1}]) \ar^-{G_2}[r] & \ialg(\coalg_{\coComm}[\kos^{-1}])\times \coalg_{\coComm}[\kos^{-1}]
}
\]
where the functors $G_1,G_2$ are the obvious forgetful functors.

The functor
\[\B\colon \ialg_{\Lie_{[1, 0]}}\to \lmod(\coalg_{\coComm})[\kos^{-1}]\]
is an equivalence by Proposition \ref{prop:relLieadditivityB}, so we just need to show that the functor $L_1$ is an equivalence. The localization functor
\[\alg(\coalg_{\coComm})[\kos^{-1}]\to \ialg(\coalg_{\coComm}[\kos^{-1}])\]
is an equivalence by results of Section \ref{sect:Lieadditivity}, so the functor $L_2$ is an equivalence.

The functor $G_1$ is conservative. Sifted colimits in $\ialg_{\Lie_{[1, 0]}}$ are created by the forgetful functor to $\iCh\times\iCh$ by Proposition \ref{prop:forgetsifted}, so the functor $G_1$ preserves sifted colimits. The functor $G_2$ is conservative; it preserves sifted colimits by \cite[Corollay 4.2.3.7]{HA}.

The left adjoint to $G_1$ post-composed with the forgetful functor to $\coalg_{\coComm}[\kos^{-1}]$ is given by the functor $(A, V)\mapsto (A, A\otimes V)$. By \cite[Corollary 4.2.4.4]{HA} the functor $G_2$ also admits a left adjoint given by the same formula, so the diagram of left adjoints commutes. Therefore, Proposition \ref{prop:luriebarrbeck} applies and thus $L_1$ is an equivalence.
\end{proof}

\begin{remark}
One can also construct the relative additivity functor $\ialg_{\Lie_{[1, 0]}}\cong \ilmod(\ialg_{\Lie})$ as follows. The colored operad $\Lie^{\str}_{[1, 0]}$ is quadratic whose Koszul dual is the cooperad of a pair of cocommutative coalgebras $C_1,C_2$ together with a morphism $C_1\rightarrow C_2$. Finally, a relative version of Proposition \ref{prop:Lieloopequivalence} gives an equivalence $\Arr(\ialg_{\Lie})\cong \ilmod(\ialg_{\Lie})$.
\end{remark}

\subsection{Relative additivity for Poisson algebras}

We now proceed to the construction of the additivity functor
\[\add\colon \ialg_{\P_{[n+1, n]}}\to \ilmod(\alg_{\P_n}).\]

Consider a pair $(A,B)\in\alg_{\P_{[n+1, n]}}$. Let $\g$ be the Koszul dual $(n-1)$-shifted Lie bialgebra to $A$ constructed in Section \ref{sect:poissonadditivity}. As a graded vector space, we can identify $\g\cong\coLie(A[1])[n-1]$. Let us also denote by $\B B$ the Koszul dual coaugmented $\P_n$-coalgebra; as a graded vector space, we can identify $\B B\cong \coP^{\cu}_n(B[n])$. Recall also that $\U(\g)$ is an associative algebra in $\P_n$-coalgebras. Now we want to construct the action map
\[a\colon \U(\g)\otimes \B B\to \B B\]
of $\P_n$-coalgebras. Such a map by associativity is uniquely determined by the map
\[\g\otimes \B B\to \B B\]
and since $\B B$ is cofree as a graded $\P_n$-coalgebra, this map is uniquely determined by projection to the cogenerators
\[\g\otimes \B B\to B[n].\]
We define this map to be adjoint to the map
\[\coLie(A[1])[n-1]\to A[n]\to \Z(B)[n].\]

Recall the description of the Koszul dual Lie bialgebra to a $\Br_{\coP_n}$-algebra such as $\Z(B)$ from the proof of Proposition \ref{prop:bracepoissoncompatible}. Using this description, we see that the associativity of the action map $a$ follows from the compatibility of the morphism $A\to \Z(B)$ with Lie brackets and the compatibility of $a$ with the differential follows from the compatibility of $A\to \Z(B)$ with the $C_\infty$ structure. Compatibility with the $\P_n$-coalgebra structures is obvious by construction.

In this way we obtain a functor
\[\alg_{\P_{[n+1, n]}}\to \lmod(\coalg_{\P_n})\]
and we let the additivity functor
\[\ialg_{\P_{[n+1, n]}}\to \ilmod(\coalg_{\P_n}[\kos^{-1}])\]
be the composite
\[\alg_{\P_{[n+1, n]}}[\qis^{-1}]\to \lmod(\coalg_{\P_n})[\kos^{-1}]\to \ilmod(\coalg_{\P_n}[\kos^{-1}]).\]

\begin{thm}
The additivity functor
\[\add\colon \ialg_{\P_{[n+1, n]}}\to \ilmod(\ialg_{\P_n})\]
is an equivalence of $\infty$-categories.
\label{thm:relPoissonadditivity}
\end{thm}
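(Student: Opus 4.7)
The plan is to combine the strategies of Theorem \ref{thm:Poissonadditivity} and Theorem \ref{thm:relLieadditivity}: I construct a commutative square
\[
\xymatrix{
\ialg_{\P_{[n+1,n]}} \ar^-{\add}[r] \ar^{\forget}[d] & \ilmod(\ialg_{\P_n}) \ar^{\forget}[d] \\
\ialg_{\Lie_{[1,0]}} \ar^-{\sim}[r] & \ilmod(\ialg_{\Lie})
}
\]
whose bottom row is the equivalence of Theorem \ref{thm:relLieadditivity} and whose vertical arrows are forgetful functors, and then apply Proposition \ref{prop:luriebarrbeck} to conclude that the top row is an equivalence.

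The left vertical functor is obtained by forgetting the underlying commutative multiplications: a $\P_{[n+1,n]}$-algebra $(A,B,f)$ with structure map $f\colon A\to \Z^{\str}(B)$ is sent to the pair of Lie algebras $(A[n], B[n-1])$ equipped with the morphism $A[n]\to \C^\bullet(B[n-1], B[n-1])[1]$ induced by the inclusion $\Z^{\str}(B)\hookrightarrow \Z(B)$ and the Lie-theoretic part of the $\Br_{\coP_n}\{n\}$-structure on $\Z(B)$. The right vertical functor is induced by the lax symmetric monoidal forgetful functor $\ialg_{\P_n}\to \ialg_{\Lie}$ on algebra--module pairs. Commutativity of the square, which is the relative analog of Proposition \ref{prop:LiePoissonForgetcompatible}, follows by tracing through the construction of $\add$: the pair $(A,B)$ is sent to $(\U(\g), \B B)$ with action $\U(\g)\otimes \B B\to \B B$ determined by the map $\g\otimes \B B\to B[n]$ adjoint to $f$, and forgetting the commutative structures corresponds under Koszul duality to forgetting the cocommutative comultiplications on $\g$ and on $\B B$, which is exactly how the relative Lie additivity functor operates.

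It remains to verify the hypotheses of Proposition \ref{prop:luriebarrbeck}. Both vertical forgetful functors are conservative, since they refine the conservative forgetful functors to chain complexes, and they preserve sifted colimits by Proposition \ref{prop:forgetsifted} together with \cite[Corollary 4.2.3.7]{HA} for the left-module $\infty$-categories. The hypothesis I expect to be the main technical obstacle is the left Beck--Chevalley condition. The left adjoints to the two forgetful functors are given by a relative symmetric algebra construction extending $\overline{\Sym}$ to algebra--module pairs (on the Lie side, a pair $(\g,\h)$ is freely turned into a pair $(\overline{\Sym}(\g[1-n]),\overline{\Sym}(\g[1-n])\otimes\overline{\Sym}(\h[1-n]))$ in a suitable sense). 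To verify that $\add$ intertwines these free functors, I follow Proposition \ref{prop:LiePoissonSymcompatible}: factor $\overline{\Sym}$ as the trivial-cobracket functor $\triv$ followed by the cobar $\Omega$ of the appropriate bar--cobar adjunction, extended to the relative setting using Proposition \ref{prop:relLieadditivityB}, and then use that this relative bar--cobar adjunction sends trivial cobrackets to trivial cobrackets, so that the natural transformation comparing the two left adjoints is a weak equivalence. With Beck--Chevalley verified, Proposition \ref{prop:luriebarrbeck} delivers the theorem.
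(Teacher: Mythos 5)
Your proposal follows essentially the same route as the paper: the same commutative square over the relative Lie additivity equivalence of Theorem \ref{thm:relLieadditivity}, the same conservativity and sifted-colimit checks, and the same verification of the left Beck--Chevalley condition by factoring $\overline{\Sym}$ through the trivial-cobracket functor and the bar--cobar adjunction as in Proposition \ref{prop:LiePoissonSymcompatible}. One small correction: the left adjoint to the forgetful functor $\ialg_{\P_{[n+1,n]}}\to\ialg_{\Lie_{[1,0]}}$ sends $(\g,\h)$ to the pair $(\overline{\Sym}(\g[-n]),\overline{\Sym}(\h[1-n]))$ with the structure map $\overline{\Sym}(\g[-n])\to\Z^{\str}(\overline{\Sym}(\h[1-n]))\cong\C^\bullet(\h,\overline{\Sym}(\h[1-n]))$ determined on generators --- there is no extra tensor factor on the module, since the module structure is encoded in the map to the center rather than in an enlargement of $B$, and correspondingly the left adjoint on the $\ilmod$ side is just $\overline{\Sym}$ applied levelwise.
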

\begin{proof}
Suppose $B$ is a $\P_n$-algebra. In particular, $B[n-1]$ is a Lie algebra and we have a morphism
\[\Z(B)[n]\to \C^\bullet(B[n-1], B[n])\]
of Lie algebras induced by the morphism of cooperads $\coComm\rightarrow \coP_n$. Compatibility with the Lie algebra structures on both sides is clear as both are given by convolution brackets.

This gives a forgetful functor
\[G_1\colon \alg_{\P_{[n+1, n]}}\to \alg_{\Lie_{[1, 0]}}\]
which sends a pair $(A, B)$ with a morphism of $\P_{n+1}$-algebras $A\to \Z^{\str}(B)$ to the pair of Lie algebras $(A[n], B[n-1])$ with a morphism of Lie algebras
\[A[n]\to\Z^{\str}(B)[n]\to \C^\bullet(B[n-1], B[n-1])[1].\]

The forgetful functor $G_1$ has a left adjoint
\[F_1\colon \alg_{\Lie_{[1, 0]}}\to \alg_{\P^{\str}_{[n+1, n]}}\]
which is constructed as follows. Consider a pair $(\g, \h)\in\alg_{\Lie_{[1, 0]}}$ equipped with a morphism $\g\rightarrow \C^\bullet(\h, \h)[1]$. Then $A=\overline{\Sym}(\g[-n])$ is a $\P_{n+1}$-algebra and $B=\overline{\Sym}(\h[1-n])$ is a $\P_n$-algebra. We can identify
\[\Z^{\str}(B)\cong \C^\bullet(\h, B)\]
as Lie algebras. Using this identification we obtain a morphism of $\P_n$-algebras
\[A\to\Z^{\str}(B)\]
defined to be the Lie map $\g\rightarrow\C^\bullet(\h, \h)[1]$ on the generators of $A$. This concludes the construction of the functor $F_1$.

Now consider the diagram
\begin{equation}
\xymatrix{
\alg_{\P^{\str}_{[n+1, n]}} \ar[r] & \lmod(\coalg_{\P_n}) \\
\alg_{\Lie_{[1, 0]}} \ar^-{\B}[r] \ar^{F_1}[u] & \lmod(\coalg_{\coComm}). \ar^{\triv}[u]
}
\label{eq:relPoissondiagram1}
\end{equation}
Consider an object $(\g,\h)\in\alg_{\Lie_{[1, 0]}}$. Under the composite
\[\alg_{\Lie_{[1, 0]}}\to \alg_{\P^{\str}_{[n+1, n]}}\to \lmod(\coalg_{\P_n})\]
the underlying associative algebra can be identified with $\U\g$ with the trivial cobracket by Proposition \ref{prop:LiePoissonSymcompatible}. Similarly, the underlying module can be identified with $\C_\bullet(\h)$ with the trivial cobracket using the weak equivalence \[\C_\bullet(\h)\stackrel{\sim}\rightarrow \B_{\P_n}(\overline{\Sym}(\h[1-n])).\]
It is easy to see that the action of $\U\g$ on $\C_\bullet(\h)$ under this equivalence coincides with the action given by the composite
\[\alg_{\Lie_{[1, 0]}}\to \lmod(\coalg_{\coComm})\to\lmod(\coalg_{\P_n})\]
and hence the diagram \eqref{eq:relPoissondiagram1} commutes up to a weak equivalence.

Denote by
\[\adj{F_2\colon \ialg_{\Lie}}{\ialg_{\P_n}\colon G_2}\]
the free-forgetful adjunction and consider a diagram of $\infty$-categories
\begin{equation}
\xymatrix{
\ialg_{\P_{[n+1, n]}} \ar^-{\add_{\P_n}}[r] \ar@<.5ex>^{G_1}[d] & \ilmod(\ialg_{\P_n}) \ar@<.5ex>^{G_2}[d] \\
\ialg_{\Lie_{[1, 0]}} \ar^-{\add_{\Lie}}[r] \ar@<.5ex>^{F_1}[u] & \ilmod(\ialg_{\Lie}) \ar@<.5ex>^{F_2}[u]
}
\label{eq:relPoissondiagram2}
\end{equation}
By Theorem \ref{thm:relLieadditivity} the functor $\add_{\Lie}$ is an equivalence. Moreover, the commutativity of diagram \eqref{eq:relPoissondiagram1} implies that
\[\add_{\P_n}\circ F_1\cong F_2\circ\add_{\Lie}.\]
To check that the natural morphism
\[\add_{\Lie}\circ G_1\rightarrow G_2\circ \add_{\P_n}\]
is an equivalence, it is enough to check it in $\ialg(\ialg_{\Lie})\times \ialg_{\Lie}$ since the forgetful functor
\[\ilmod(\ialg_{\Lie})\to \ialg(\ialg_{\Lie})\times \ialg_{\Lie}\]
is conservative. By Proposition \ref{prop:LiePoissonForgetcompatible} the corresponding morphism in $\ialg(\ialg_{\Lie})$ is an equivalence. It is also obvious that the corresponding morphism in $\ialg_{\Lie}$ is an equivalence since the diagram \eqref{eq:relPoissondiagram2} can be forgotten to the commutative diagram
\[
\xymatrix{
\ialg_{\P_n} \ar^{\id}[r] \ar@<.5ex>^{\forget}[d] & \ialg_{\P_n} \ar@<.5ex>^{\forget}[d] \\
\ialg_{\Lie} \ar^{\id}[r] \ar@<.5ex>^{\overline{\Sym}}[u] & \ialg_{\Lie} \ar@<.5ex>^{\overline{\Sym}}[u]
}
\]

The forgetful functor $G_1$ is conservative and it preserves sifted colimits since they are created by the forgetful functor to $\iCh\times\iCh$ by Proposition \ref{prop:forgetsifted}.  Similarly, the forgetful functor $G_2$ is conservative and preserves sifted colimits since sifted colimits in $\ilmod(\ialg_{\cO})$ are created by the forgetful functor to $\ialg_{\cO}\times\ialg_{\cO}$ and hence by the forgetful functor to $\iCh\times \iCh$. Therefore, by Proposition \ref{prop:luriebarrbeck} the functor $\add_{\P_n}$ is an equivalence.
\end{proof}

\begin{cor}
Given a morphism $f\colon A\rightarrow B$ of dg commutative algebras, there is a canonical equivalence of spaces of $n$-shifted coisotropic structures
\[\Cois^{MS}(f, n)\stackrel{\sim}\to \Cois^{CPTVV}(f, n).\]
\label{cor:coisotropicadditivity}
\end{cor}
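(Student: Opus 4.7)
The plan is to derive the corollary directly from Theorem~\ref{thm:relPoissonadditivity}. Since $\Cois^{MS}(f, n)$ and $\Cois^{CPTVV}(f, n)$ are by definition the fibers at $f$ of the forgetful functors $\ialg_{\P_{[n+1, n]}}\to \Arr(\ialg_{\Comm})$ and $\ilmod(\ialg_{\P_n})\to \Arr(\ialg_{\Comm})$ respectively, it is enough to promote the relative additivity equivalence
\[\add\colon \ialg_{\P_{[n+1, n]}}\to \ilmod(\ialg_{\P_n})\]
of Theorem~\ref{thm:relPoissonadditivity} to an equivalence of $\infty$-categories over $\Arr(\ialg_{\Comm})$, after which the claim is obtained by passing to fibers over $f$.

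First I would pin down the two forgetful functors. On the CPTVV side, a pair $(A, B)$ with action $A\otimes B\to B$ is sent to the composite $A\to A\otimes B\to B$, remembering only the underlying commutative algebras. On the MS side, a triple $(A, B, A\to \Z^{\str}(B))$ is sent to the composite of commutative algebras $A\to \Z^{\str}(B)\to B$. The goal is to construct a natural equivalence between the first functor and the second functor precomposed with $\add$.

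The key step is to check that $\add$ is compatible with the passage to the underlying commutative algebra in both slots. For the algebra slot, this is essentially Proposition~\ref{prop:CommPoissoncompatible}: the underlying commutative algebra of $\add(A)$ is canonically identified with $A$. For the module slot, the $\P_n$-coalgebra $\B B=\coP^{\cu}_n(B[n])$ used in the construction of the relative additivity functor, after forgetting its cobracket, is the commutative bar construction of $B$, so the underlying commutative algebra of the module slot recovers $B$ itself. Finally, the action morphism $\U(\g)\otimes \B B\to \B B$ was defined in the proof of Theorem~\ref{thm:relPoissonadditivity} precisely so that its projection to cogenerators is adjoint to the composite $A\to \Z^{\str}(B)\to B$; pushing this down through the Koszul duality identification and the equivalence between $\P_{n+1}$-algebras and $(n-1)$-shifted Lie bialgebras from Section~\ref{sect:poissonadditivity} recovers the original commutative morphism $f$.

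The main obstacle is the bookkeeping of this compatibility through the Koszul dual and $\U(-)$ identifications, which must be done at the $\infty$-categorical level rather than strictly. Once it is verified that the triangle
\[
\xymatrix{
\ialg_{\P_{[n+1, n]}} \ar^-{\add}[rr] \ar[dr] && \ilmod(\ialg_{\P_n}) \ar[dl] \\
& \Arr(\ialg_{\Comm}) &
}
\]
commutes up to a canonical equivalence, passing to the fiber over $f\colon A\to B$ yields the desired natural equivalence $\Cois^{MS}(f, n)\stackrel{\sim}\to \Cois^{CPTVV}(f, n)$ of spaces.
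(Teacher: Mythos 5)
Your proposal follows essentially the same route as the paper: reduce to showing that the relative additivity functor commutes with the two forgetful functors to $\Arr(\ialg_{\Comm})$ and then pass to fibers over $f$. The one point you flag as the main obstacle---doing the compatibility bookkeeping $\infty$-categorically---is handled in the paper by verifying that the corresponding square of ordinary categories on the Koszul-dual coalgebra side commutes \emph{strictly} (the arrow $\U\g\to\B_{\P_n}B$ is the image of $1\in B$ under the action map, which after passing to primitives is the bar construction of $f$), so the identification descends to the localizations with no further higher-categorical work.
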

\begin{proof}
To prove the claim we have to show that we have a commutative diagram of $\infty$-categories
\[
\xymatrix{
\ialg_{\P_{[n+1, n]}} \ar^{\add_{\P_n}}[rr] \ar[dr] && \ilmod(\ialg_{\P_n}) \ar[dl] \\
& \Arr(\ialg_{\Comm}) &
}
\]

The forgetful functor $\alg_{\P_n}\rightarrow \alg_{\Comm}$ under Koszul duality corresponds to the functor $\coalg_{\coP_n}\rightarrow \coalg_{\coLie}$ which is given by taking primitive elements. We are going to show that the diagram
\[
\xymatrix{
\alg_{\P_{[n+1, n]}} \ar[r] \ar[dd] & \lmod(\coalg_{\coP_n}) \ar[d] \\
& \Arr(\coalg_{\coP_n}) \ar[d] \\
\Arr(\alg_{\Comm}) \ar[r] & \Arr(\coalg_{\coLie})
}
\]
strictly commutes which will prove the claim.

Consider an object $(A, B)\in\alg_{\P_{[n+1, n]}}$. Let $\g$ be the $(n-1)$-shifted Lie bialgebra Koszul dual to $A$ and $\B_{\P_n} B$ the coaugmented $\P_n$-coalgebra Koszul dual to $B$. The functor
\[\lmod(\coalg_{\coP_n})\to \Arr(\coalg^{\coaug}_{\coP^{\cu}_n})\]
sends the action map $\U\g\otimes \B_{\P_n} B\rightarrow \B_{\P_n} B$ to the morphism $\U\g\rightarrow \B_{\P_n} B$ which is given by the image of $1\in B$. After passing to primitives we obtain a morphism
\begin{equation}
\g\to \B_{\Comm} B
\label{eq:coisotropicadditivitymor}
\end{equation}
of Lie coalgebras. But as a Lie coalgebra we can identify $\g\cong \B_{\Comm} A$ and the morphism \eqref{eq:coisotropicadditivitymor} is the image of $A\rightarrow B$ under the commutative bar construction.
\end{proof}

\begin{remark}
In \cite{MS} we construct a forgetful map from $n$-shifted Poisson structures to $(n-1)$-shifted Poisson structures on a commutative algebra $A$ using the natural correspondence
\[\xymatrix{
\mathrm{Pois}(A, n-1) & \Cois^{MS}(\id, n) \ar[l] \ar^{\sim}[r] & \mathrm{Pois}(A, n).
}\]
We can relate it to the additivity functor as follows. Let $A$ be a $\P_{n+1}$-algebra and $\g$ the Koszul dual $(n-1)$-shifted Lie bialgebra. Then $\U(\g)\in\alg(\coalg_{\coP_n})$ is naturally a module over itself which by Corollary \ref{cor:coisotropicadditivity} gives a coisotropic structure on the identity $A\rightarrow A$, i.e. an element of $\Cois^{MS}(\id, n)$. The underlying $\P_n$-algebra structure in $\mathrm{Pois}(A, n-1)$ is then the Koszul dual $\P_n$-algebra to the $\P_n$-coalgebra $\U(\g)$. But this exactly coincides with the forgetful functor
\[\xymatrix{
\ialg_{\P_{n+1}} \ar^{\sim}[r] & \ialg(\ialg_{\P_n}) \ar[r] & \ialg_{\P_n}.
}
\]
\end{remark}

\printbibliography

\end{document}